\theoremstyle{plain}
\newtheorem{theorem}{Theorem}
\newtheorem{lemma}[theorem]{Lemma}
\newtheorem{corollary}[theorem]{Corollary}
\newtheorem{conjecture}[theorem]{Conjecture}
\newtheorem*{rep@theorem}{\rep@title}
\newcommand{\newreptheorem}[2]{%
\newenvironment{rep#1}[1]{%
 \def\rep@title{#2 \ref{##1}}%
 \begin{rep@theorem}}%
 {\end{rep@theorem}}}
\newcommand{\N}{\mathbb{N}}
\newcommand{\R}{\mathbb{R}}
\newcommand{\pr}{\mathbb{P}}
\newcommand{\E}{\mathbb{E}}
\newcommand{\Po}{\rm Po}
\newcommand{\Bin}{\rm Bin}
\newcommand{\ind}{\mathbf 1}
\newcommand{\cc}[1]{{\color{black}#1}}
\newcommand{\hered}{\mbox{\rm Hered}}
\newcommand{\minor}{\mathrm{Minor}}
\newcommand{\pend}{\mbox{\rm pend}}
\newcommand{\whp}{\mbox{\rm whp}}
\newcommand{\wvhp}{\mbox{\rm wvhp}}
\newcommand{\comp}{{\mathrm Comp}}
\newcommand{\con}{\mbox{\rm Conn}}
\newcommand{\cA}{\mathcal A}
\newcommand{\cB}{\mathcal B}
\newcommand{\cC}{\mathcal C}
\newcommand{\cD}{\mathcal D}
\newcommand{\cE}{\mathcal E}
\newcommand{\cF}{\mathcal F}
\newcommand{\cG}{\mathcal G}
\newcommand{\cH}{\mathcal H}
\newcommand{\cL}{\mathcal L}
\newcommand{\cP}{\mathcal P}
\newcommand{\cO}{\mathcal O}
\newcommand{\cN}{\mathcal N}
\newcommand{\cS}{\mathcal S}
\newcommand{\tA}{\widetilde{\mathcal A}}
\newcommand{\tB}{\widetilde{\mathcal B}}
\newcommand{\tC}{\widetilde{\mathcal C}}
\newcommand{\tD}{\widetilde{\mathcal D}}
\newcommand{\tP}{\widetilde{\mathcal P}}
\newcommand{\Frag}{{\rm Frag}}
\newcommand{\frag}{{\rm frag}}
\newcommand{\m}{}
\newcommand{\inu}{\in_u}
\newcommand{\fsgr}{{\rm fsgr}}
\newcommand{\aut}{{\rm aut}}
\newcommand{\bS}{\mathbf S}
\newcommand{\eps}{\varepsilon}
\title{Random graphs embeddable in order-dependent 
surfaces}
\author{
Colin McDiarmid\\Department of Statistics\\ University of Oxford\\
cmcd@stats.ox.ac.uk\\
\and
Sophia Saller\\
Department of Mathematics\\ University of Oxford\\
and DFKI\\
sophia.saller@dfki.de}
\date{\today}
\begin{document}

\maketitle

\begin{abstract}
Given a `genus' function $g=g(n)$, we let $\cE^g$ be the class of all graphs $G$ such that if $G$ has order $n$ (that is, has $n$ vertices) then it is embeddable in a surface of Euler genus at most $g(n)$.
Let the random graph $R_n$ be sampled uniformly from the graphs in $\cE^g$ on vertex set $[n]=\{1,\ldots,n\}$.  Observe that if $g(n)$ is 0 then $R_n$ is a random planar graph, and if $g(n)$ is sufficiently large then $R_n$ is a binomial random graph $G(n,\tfrac12)$.
We investigate typical properties of $R_n$. We find that for \emph{every} genus function $g$, with high probability at most one component of $R_n$ is non-planar.  In contrast, we find a transition for example for connectivity: if
$g$ is non-decreasing and $g(n) = O(n/\log n)$ then $\liminf_{n \to \infty} \pr(R_n \mbox{ is connected}) < 1$, and if $g(n) \gg n$ then with high probability $R_n$ is connected. These results also hold when we consider orientable and non-orientable surfaces separately. We also investigate random graphs sampled uniformly from the `hereditary part' or the `minor-closed' part of $\cE^g$, and briefly consider corresponding results for unlabelled graphs.  
\end{abstract}


\section{Introduction}

Given a surface $S$, let $\cE^S$ be the class of all (finite, simple) graphs embeddable in $S$ (not necessarily cellularly), so the class $\cP$ of planar graphs is $\cE^{\bS_0}$ where $\bS_0$ is the sphere. For any given class $\cA$ of graphs, for each $n \in \N$ we let $\cA_n$
denote the set of graphs in $\cA$ on vertex set $[n]: = \{1,\ldots,n\}$, and let $R_n \inu \cA $ indicate that the random graph $R_n$ is chosen uniformly at random from $\cA_n$ (assuming this set is non-empty).
A \emph{genus function} is a function $g=g(n)$ from the positive integers $\mathbb{N}$ to the non-negative integers $\mathbb{N}_0$: we shall always take $g$ to be such a function. We let $\cE^g$ be the class of all graphs $G \in \cE^S$ for some surface $S$ of Euler genus at most $g(n)$ where $n=v(G)$ (the number of vertices of $G$).
If we insist that all the surfaces $S$ involved are orientable we obtain the graph class $\cO\cE^g$, and similarly if we insist that the surfaces are non-orientable we obtain $\cN\cE^g$ (where $\cN\cE^0$ is taken to be $\cP$). 
For a full discussion of embeddings in a surface see for example~\cite{GraphsonSurfaces}.
In the following, let the class $\cA^g$ be any one of $\cE^g$, $\cO\cE^g$, $\cN\cE^g$ or $\cO\cE^g\cap \cN\cE^g$. We consider $R_n \inu \cA^g$ and are interested in properties of $R_n$, such as the probability that $R_n$ is connected, the typical number of edges or faces (in a relevant embedding), 
and so on.

The class $\cP$ of planar graphs, and more generally the class $\cE^S$
of graphs embeddable in a fixed surface $S$, have received much attention recently.
Much is known about typical properties of $R_n \inu \cP$,
see for example \cite{RandCub, PlanGraphsViaWell, TherandomPlanar, QuadraticExact, EdgesRand, RandPlanNnod, RandPlanAvgDeg, Asymptoticformula, LabGC, GCgiven3, MaxDegree, RandomPlanar, addable, RandTriang}.
The corresponding questions for $R_n \inu \cE^S$
have also been extensively studied and much is known, see for example \cite{asymptLabGraphs, LimitLawsFixedS, evolutionRandomS, EvolutionGiant, PhaseTransitions, EnumerationCubicSurf, RandomGraphs}.
For some of these results,
all that is needed for their proof is the existence of a growth constant for the class or even just a positive radius of convergence of the exponential generating function.
We say that a class $\cA$ of (labelled) graphs has \emph{(labelled) growth constant} $\gamma\,$ if $0<\gamma<\infty$ and
\begin{equation} 
    \left( \left|\cA_n \right| / n! \right)^{\frac1{n}} \rightarrow \gamma \;\; \mbox{ as } n \rightarrow \infty \, .\notag
\end{equation}
Given a class $\cB$ of (labelled) graphs, we let $\rho(\cB)$ be the radius of convergence of the exponential generating function $B(x) = \sum_n |\cB_n|/n! \: x^n$, so $\rho(\cB) = \left(  \limsup_{n\rightarrow \infty} \left( |\cB_n| / n! \right)^{\frac1{n}}\right)^{-1}$.  Thus for example $\rho(\cP)= \gamma_{\cP}^{-1}$. 
For any class $\cB$ of (labelled) graphs we denote by $\tB$ the corresponding set of unlabelled graphs; and we let $\tilde{\rho}(\tB)$ be the radius of convergence of the ordinary generating function $\tilde{B}(x) = \sum_n |\tB_n| \: x^n$, so $\tilde{\rho}(\tB) = \left(  \limsup_{n\rightarrow \infty} |\tB_n|^{\frac1{n}}\right)^{-1}$.
In a companion paper \cite{MSsizes}, we have shown that the class $\cA^g$ has a growth constant when $g(n) = o(n/\log^3 n )$, and that $\rho(\cA^g)>0$ if and only if $g(n)=O(n/\log n)$. (When $\log$ has no subscript we take it to mean the natural logarithm.) From the existence of a growth constant we can deduce some common properties for $R_n \inu \cA^g$ 
when we have relatively small genus $g$. But what if
the genus is as large as $n/\log^3 n$ or even $n/\log n$? In the current paper we give results on $R_n \inu \cA^g$
for a wide range of functions $g$.

This paper is part of an overarching project in which we investigate two closely related questions : (a) how large is the graph class $\cA^g$; and (b) what are typical properties of a random $n$-vertex graph $R_n$ sampled uniformly from such a class? In the companion paper \cite{MSsizes}, we investigate question (a) and give estimates and bounds on the sizes of these classes of graphs. In the current paper, we consider question~(b) on random graphs, using results from~\cite{MSsizes}. A central aim
in both of these papers is to determine where there is a change (`phase transition') between `planar-like' behaviour and behaviour like that of a binomial (or Erd\H{o}s-R\'enyi) random graph, both for class sizes and for properties. See~\cite{DKMS2019} for results on the evolution of random graphs on non-constant orientable surfaces when we consider also the number of edges.

We say that a sequence $A_n$ of events occurs \emph{with high probability} ($\whp$) if $\mathbb{P}(A_n)\rightarrow 1$ as $n \rightarrow \infty$; and we say that $A_n$ occurs \emph{with very high probability} ($\wvhp$) if $\mathbb{P}(A_n) = 1 - e^{-\Omega(n)}$ as $n \rightarrow \infty$.


\section{Statement of Results}
\label{sec.main}

Recall that $g=g(n)$ is a genus function. It may be natural to assume that $g$ is non-decreasing, and sometimes we will do so. Recall also that $\cA^g$ denotes one of the classes of graphs $\cO\cE^g$, $\cN\cE^g$, $\cE^g = \cO\cE^g \cup \cN\cE^g$ or $\cO\cE^g \cap \cN\cE^g$.

We first consider random embeddable graphs $R_n \inu \cA^g$,  where we insist simply that the graph be embeddable in the appropriate surface (of Euler genus at most $g(n)$ for an $n$-vertex graph) and we have no other requirements.  Our main focus is on this case, presented in
Section~\ref{subsec.embed}. After considering such random graphs, in Section~\ref{subsec.heredembed} we consider random `hereditarily embeddable' graphs, where we insist also that each  induced subgraph is embeddable in an appropriate surface, depending on its number of vertices. 
We briefly consider random unlabelled graphs from $\tilde{\cA}^g$ in Section~\ref{subsec.unlab}. Finally, we conclude with a brief plan of the rest of the paper in Section~\ref{subsec.plan}.

\subsection{Random graphs from an embeddable class, $R_n \inu \cA^g$}
\label{subsec.embed}

We present six theorems on random graphs embeddable in  surfaces, concerning: planarity of the fragment; connectivity; maximum degree; maximum face size; and numbers of edges, leaves and faces (not in this order).  Throughout Section~\ref{subsec.embed} we let $R_n \inu \cA^g$.

First we see that the fragment of a random graph $R_n \inu \cA^g$ is likely to be planar for \emph{every} genus function~$g$. Recall that the \emph{fragment} of a graph $G$ is the (unlabelled) subgraph induced on the vertices not in the largest (by number of vertices) component
(where say we break ties lexicographically). We denote this graph by $\Frag(G)$ and its number of vertices 
by $\frag(G)$.
\m{Lemma 30 of sizes concerns max `component-degree'.  Also true here}

\begin{theorem} \label{thm.Fragplanar}
For every genus function $g$, $\,\Frag(R_n)$ is planar whp.
\end{theorem}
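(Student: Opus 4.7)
The plan is a first-moment argument: I show that the expected number of non-planar components of $\Frag(R_n)$ is $o(1)$, whence the conclusion follows by Markov's inequality. A non-planar fragment component lies on a vertex set $V\subseteq[n]$ of size $c\in[5,\lfloor n/2\rfloor]$, since $K_5$ is the smallest non-planar graph and a fragment component has at most half the vertices. For fixed $V$, I would bound $\pr(V\text{ is a non-planar component of }R_n)$ by exploiting additivity of the relevant genus (Euler, orientable, or non-orientable Euler, according to which variant of $\cA^g$ is in play) under disjoint union of graphs: any $G\in\cA^g_n$ having $V$ as a non-planar component decomposes as $G=G[V]\sqcup G[\bar V]$ with $\gamma(G[V])\ge 1$, so $G[\bar V]$ has the relevant genus at most $g(n)-1$ (using that non-planar implies $\gamma_o,\,\tilde\gamma\ge 1$ simultaneously, which covers the intersection class $\cO\cE^g\cap\cN\cE^g$ as well). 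Writing $b_m(h)$ for the number of graphs on $[m]$ in the relevant class with genus at most $h$, this gives
\[
\pr(V\text{ is a non-planar comp of }R_n)\le\frac{b_c(g(n))\,b_{n-c}(g(n)-1)}{b_n(g(n))},
\]
and summing over the $\binom{n}{c}$ choices of $V$ and over $c$, the expected number of non-planar fragment components is at most
\[
\sum_{c=5}^{\lfloor n/2\rfloor}\binom{n}{c}\,b_c(g(n))\,\frac{b_{n-c}(g(n)-1)}{b_n(g(n))}.
\]

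To bound this sum I would split the range of $c$ at some threshold $h(n)$. For $c\le h(n)$, I would appeal to the quantitative class-size bounds from the companion paper~\cite{MSsizes}: using the trivial lower bound $b_n(g(n))\ge b_{n-c}(g(n))$ (obtained by taking $V$ to be a set of $c$ isolated vertices), the summand reduces to $\binom{n}{c}\,b_c(g(n))\cdot(b_{n-c}(g(n)-1)/b_{n-c}(g(n)))$; whenever the class has positive radius of convergence, the single-decrement ratio $b_m(g(n)-1)/b_m(g(n))$ gains a polynomial factor in $m$, which suffices. For $c>h(n)$, I would invoke concentration of $\frag(R_n)$ from~\cite{MSsizes}, showing that the fragment has size at most $h(n)$ $\whp$, so fragment components of size exceeding $h(n)$ are vanishingly rare. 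As a sanity check at the opposite extreme, when $g(n)$ is so large that every $(n-c)$-vertex graph has genus at most $g(n)-1$, the single-decrement ratio equals $1$ but a direct entropy computation gives the summand bound $\binom{n}{c}\cdot 2^{\binom{c}{2}+\binom{n-c}{2}-\binom{n}{2}}=\binom{n}{c}\cdot 2^{-c(n-c)}$, which is super-polynomially small.

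The main obstacle is making these estimates uniform in the genus function $g$. In the two extreme regimes (bounded $g$, and $g$ so large the class is nearly saturated) the argument is clean; the subtle case is the intermediate range, where $g(n)$ grows with $n$ but stays below saturation and $c$ is a substantial fraction of $n$. Here neither fixed-surface enumeration nor a saturation-entropy bound applies directly, and one must lean on the sharp control on $b_n(g)$ at non-constant genus from~\cite{MSsizes}, coupled with the fragment-size concentration results established there.
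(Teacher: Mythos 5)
Your plan is recognizably in the spirit of the paper's proof — a surgery/double-counting argument combined with control of $\frag(R_n)$ via bridge-addability — but two of the key quantitative steps do not go through as stated, and one of them is precisely the difficulty that forces the paper into a genuinely different argument for the non-orientable case.

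First, the lower bound $b_n(g(n))\ge b_{n-c}(g(n))$ is far too weak to absorb the combinatorial factor $\binom{n}{c}\,b_c(g(n))$: even for bounded $c$ this factor is of order $n^c$, while you claim to gain only a polynomial factor from the decrement ratio, which leaves a residue of order $n^{c-1}$. The paper instead combines (\ref{eqn.lbasymp}) with the per-vertex growth bound $|\cA^h_{n+1}|\ge 2n\,|\cA^h_n|$ of (\ref{eqn.growth}) to obtain the much sharper $|\cA^h_n|\ge (n)_k\,|\cA^{h-2}_{n-k}|$, which matches the $(n)_k$ count of labelled copies of a fixed unlabelled nonplanar $K$ exactly. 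Relatedly, the paper does not enumerate all possible nonplanar graphs on a vertex set $V$; it first reduces to fragment size at most a fixed $\ell$ via $\E[\frag(R_n)]<2$, and then controls a \emph{finite} list of unlabelled $K$, so the factor $b_c(g(n))$ never appears.

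Second, and more seriously: your claim that ``the single-decrement ratio $b_m(g(n)-1)/b_m(g(n))$ gains a polynomial factor in $m$'' is not established by the tools in~\cite{MSsizes}. Equation~(\ref{eqn.lbasymp}) only controls the ratio $b_m(h)/b_m(h+2)$, i.e.\ a \emph{double} decrement, which is what one gets when removing an orientably-nonplanar component (orientable Euler genus is always even). For $\cN\cE^g$ a nonplanar component may have non-orientable Euler genus exactly $1$, so one only loses a single unit of genus, and no bound on the single-decrement ratio is available. This is exactly why the paper's Lemma~\ref{lem.nonori} does \emph{not} proceed via genus decrement at all: it first bounds the number of pendant appearances of $K$ (Lemma~\ref{lem.pendH}, a Poisson-type tail estimate), and then argues by a different surgery — adding an edge from the $K$-component into the rest of the graph — where the overcount is controlled by the pendant-appearance bound rather than by any genus-decrement ratio. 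Your sketch handles the orientable case in roughly the right way, but papers over the non-orientable case with an assertion that is both unproved and, as far as the available estimates go, inaccessible by your route.

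One further small point: you would also want the reduction to bounded $g$ that the paper performs first, using Theorem~\ref{thm.conn}~(c) to dispose of $g(n)\gg n$ (where $R_n$ is connected whp and so has no fragment at all); otherwise the uniform-in-$g$ estimates you need become harder to formulate.
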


By precise asymptotic estimates for $|\cP|$, $|\cO\cE^2_n|$ and $|\cN\cE^1_n|$, see  \cite{asymptLabGraphs,LimitLawsFixedS,Asymptoticformula}, we have $|\cO\cE^2_n| / |\cP_n| = \Theta(n^{5/2})$ and $|\cN\cE^1_n| / |\cP_n| = \Theta(n^{5/4})$.  Thus $|\cO\cE^2_n| \gg |\cP_n|$ (which also follows from equation~(\ref{eqn.lbasymp}) below) and $|\cN\cE^1_n| \gg |\cP_n|$.  Thus if $g(n) \geqslant 2$ for all sufficiently large $n$ and $R_n \inu \cO\cE^g$ then whp $R_n$ is non-planar, and so by Theorem~\ref{thm.Fragplanar} whp exactly one component of $R_n$ is non-planar. 
Similarly, if $g(n) \geqslant 1$ for all sufficiently large $n$ and $R_n \inu \cN\cE^g$ then whp exactly one component of $R_n$ is non-planar. 

\smallskip

A central interest here concerns connectivity.  Let $C(x)$ be the exponential generating function of the class $\cC$ of connected planar graphs, and let $\lambda = C(\rho(\cP)) \approx 0.037439$.
For $R_n \inu \cP$, the probability that $R_n$ is connected tends to the constant $p^* = e^{-\lambda} \approx  0.963$; and indeed $\kappa(R_n)-1$ is distributed asymptotically as a Poisson law of parameter $\lambda$.
Further, if $S$ is any fixed surface and $R^S_n \inu \cE^S$, then the same results hold for $\kappa(R^S_n)$ and the probability that $R^S_n$ is connected~\cite[Theorem 5.2]{LimitLawsFixedS}. See also~\cite[Corollary 1.6 c]{randomGraphsMinorClosed}, and Theorem~\ref{thm.heredfrag} and Section~\ref{subsec.BPPRG} below.


\begin{theorem}\label{thm.conn}
\begin{description}
\item{(a)}
If $g(n)$ is $o(n/\log^3 n)$ then $\,\limsup_{n \to \infty} \pr(R_n \mbox{ is connected}) \leqslant p^*$. 

\item{(b)}
If $g(n)$ is $O(n/\log n)$ and $g$ is non-decreasing, then  $\; \liminf_{n \to \infty} \,\pr(R_n \mbox{ is connected}) < 1$. 

\item{(c)} 
 If $\, g(n) \gg n \,$ then whp $R_n$ is connected.
\end{description}
\end{theorem}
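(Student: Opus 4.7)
The three parts of the theorem correspond to different regimes of $g$, and I would handle each using the appropriate size estimate from the companion paper \cite{MSsizes} combined with an addability property. The addability observation is: if $g$ is non-decreasing, $G\in\cA^g_{n-k}$, and $H$ is a connected planar graph on $k$ vertices disjoint from $V(G)$, then $G\sqcup H\in\cA^g_n$, because a disjoint planar component can always be embedded in a small disk of the surface of $G$ without changing its Euler genus (and $g(n)\geq g(n-k)$); this works uniformly for any of the four choices of $\cA^g$.

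For part~(a), \cite{MSsizes} gives a growth constant for $\cA^g$ in this regime, equal to $\gamma_\cP$. Combined with the addability observation (passing first to the non-decreasing envelope of $g$, which stays in the same regime), the standard Boltzmann-Poisson machinery developed for random planar graphs (cf.\ \cite{LabGC}, and \cite{LimitLawsFixedS} for the fixed-surface analogue) shows that the number of non-giant connected components of $R_n$ converges in distribution to $\Po(\lambda)$, so $\pr(R_n\text{ connected})\to p^*=e^{-\lambda}$, which implies the limsup bound.

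For part~(b), we have $\rho(\cA^g)>0$ by \cite{MSsizes}, and a standard ratio-subsequence argument provides an $(n_j)$ along which $|\cA^g_{n_j}|/(n_j\,|\cA^g_{n_j-1}|)$ is bounded. On this subsequence, addability with $k=1$ forces the expected number of isolated vertices $n_j\,|\cA^g_{n_j-1}|/|\cA^g_{n_j}|$ to be bounded below by a positive constant, and a second moment calculation then gives $\pr(R_{n_j}\text{ has an isolated vertex})\geq c>0$, yielding the $\liminf$ statement.

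For part~(c), \cite{MSsizes} supplies super-exponential lower bounds on $|\cA^g_n|$ when $g(n)\gg n$. I would bound the disconnected count by splitting each disconnected $G\in\cA^g_n$ at the component $C$ containing vertex $1$:
\begin{equation*}
\#\{G\in\cA^g_n:G\text{ disconnected}\}\,\leq\,\sum_{k=1}^{n-1}\binom{n-1}{k-1}\,c_k\,b_{n-k},
\end{equation*}
where $c_k$ counts the connected graphs on a given $k$-element vertex set embeddable in a surface of Euler genus $g(n)$ and $b_{n-k}=|\cE^{g(n)}_{n-k}|$ (both $C$ and $G\setminus C$ sit inside the constant-genus class $\cE^{g(n)}$, by monotonicity under taking subgraphs). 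Size estimates from \cite{MSsizes} on $|\cE^{g(n)}_m|$ as a function of $m$ should then show this sum is $o(|\cA^g_n|)$. I expect the main obstacle to be controlling the intermediate terms $k\in[2,n-2]$: the extremes $k=1$ and $k=n-1$ are handled directly by super-exponential growth of $|\cA^g_n|$, but the middle range requires convexity/log-concavity estimates on the sequence $|\cE^{g(n)}_m|$ in $m$, which should be the technical core of the argument.
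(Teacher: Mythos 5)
Your plan runs into genuine gaps in each part. For (a), you invoke ``the standard Boltzmann--Poisson machinery'' to conclude $\pr(R_n\text{ connected})\to p^*$, but that machinery needs both the attaching \emph{and} detaching directions of pendant-appearance control. Detaching fails for the non-hereditary class $\cA^g$: removing a pendant planar piece from an $n$-vertex graph $G\in\cA^g_n$ leaves an $(n-k)$-vertex graph that is only known to embed in genus $g(n)$, not $g(n-k)$, so it need not lie in $\cA^g_{n-k}$. A growth constant for $\cA^g$ does not give the smoothness $|\cA^g_n|/(n|\cA^g_{n-1}|)\to\gamma_\cP$ that would rescue this, and indeed the paper explicitly notes that full convergence would require Conjecture 14 of \cite{MSsizes}; what it actually proves is only the stated $\limsup\leq p^*$, via the one-sided result Lemma 6.1(a) of \cite{PendAppComp}, and reserves the full Boltzmann--Poisson convergence for the hereditary class in Theorem~\ref{thm.heredfrag}, where detaching does hold. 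For (b), the subsequence on which $|\cA^g_{n}|/(n|\cA^g_{n-1}|)$ is bounded is the right object to isolate, but ``a second moment calculation'' is not a valid bridge from $\E[\#\text{isolated}]\geq c$ to $\pr(\exists\text{ isolated})\geq c'$: Paley--Zygmund would require control of $|\cA^{g(n)}_{n-2}|\,|\cA^g_n|/|\cA^{g(n)}_{n-1}|^2$, which you do not have. The correct tool is bridge-addability (Lemma~\ref{lem.frag}(a)): at least a $1/e$ fraction of $\cA^{g(n)}_{n-1}$ is connected, so $\pr(\frag(R_n)=1)\geq \tfrac1e\cdot n\,|\cA^g_{n-1}|/|\cA^g_n|$, which is bounded below on your subsequence without any second moment. (The paper's own Lemma~\ref{lemma.connectedLD} runs a leaf-deletion argument instead, but the isolated-vertex route via the fixed surface growth ratio also works once you use bridge-addability rather than Chebyshev.)

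For (c), your decomposition at the component containing vertex $1$ is a natural starting point, but you yourself flag that the middle range $k\in[2,n-2]$ is unresolved, and the log-concavity estimates on $m\mapsto|\cE^{g(n)}_m|$ that you would need are not among the results supplied by \cite{MSsizes}. The paper avoids summing over all component sizes entirely: Lemma~\ref{lem.baddprob} shows that for a bridge-addable class closed under deleting all edges of a component, $\pr(R_n\text{ connected})\geq 1-\eps-c\,\pr(\frag(R_n)=1)$ (bounded fragments dominate, and those reduce to the single-isolated-vertex case); Lemma~\ref{lem.conn-and-fsgr} then reduces connectedness whp to $\fsgr(\cA^g,n)=|\cA^g_n|/(n\,|\cA^{g(n)}_{n-1}|)\to\infty$; and $\fsgr$ is driven to infinity via~(\ref{eqn.fsgr1}) together with the edge lower bound $e(R_n)\geq g_2(n)-1$ whp (from~(\ref{eqn.notin}) and~(\ref{eqn.egeqg}) for some $n\ll g_2\ll n^2$), which gives $\E[e(R_n)]\gg n$. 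This is a structurally different and cleaner route than the direct convolution bound you propose, and it closes exactly the hole you identified as the ``technical core.''
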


\noindent
Conjecture 14 of \cite{MSsizes} concerns how, for a given surface $S$, $|\cE^S_n|$ grows when we increase $n$ by 1.
If this conjecture held then, when $\cA^g$ is $\cO\cE^g$ or $\cN\cE^g$ or $\cE^g$, we could improve the statement in part (a) above 
to say the following: if $g(n)$ is $o(n/\log^3 n)$ 
then 
$\pr(R_n \mbox{ is connected}) \to p^*$ as $n \to \infty$. (We justify this assertion immediately after the proof of Theorem~\ref{thm.conn}~(a) in Section~\ref{sec.conn} below.)

Consider part (b) of this theorem.
Observe that the conclusion $\, \liminf_{n \to \infty} \,\pr(R_n \mbox{ is connected}) < 1$ is equivalent to the statement that it is not the case that $R_n$ is connected whp. 
Perhaps if the genus function $g(n) = O(n/\log n)$ and $g$ is non-decreasing
then $\limsup_{n \to \infty} \,\pr(R_n \mbox{ is connected)} <1$? Indeed `for most $n$' this is true.
The \emph{lower (asymptotic) density} of a set $I \subseteq \N$ is defined to be $\liminf_{n \to \infty} |I \cap [n]| / n$.
We shall see in Lemma~\ref{lemma.connectedLD} that there is a set $I \subseteq \N$ of lower density arbitrarily close to 1 such that 
$\limsup_{n \to \infty, n \in I} \,\pr(R_n \mbox{ is connected)} <1$.
By parts (b) and (c) of Theorem~\ref{thm.conn}, the threshold for $R_n$ to be connected whp is when $g(n)$ is somewhere between about $n/\log n$ and about $n$. 

We are about to discuss numbers of edges, but for now let us note that each graph in $\cE^g_n$ has average degree less than $6(1\, +g(n)/n)$, see equation~(\ref{maxedges}) below.  Thus if we want $R_n$ to have growing average degree then we must have $g(n) \gg n$; and in this case $R_n$ is connected whp by part (c) above.  In contrast, for the binomial 
random graph $G(n,p)$ 
the threshold for connectivity is when the average degree is $\log n\,$ \cite{EvolutionRandom}.
\smallskip

In order to prove Theorem~\ref{thm.conn} on connectivity, we need to learn about the number $e(R_n)$ of edges of $R_n$ and the number $\ell(R_n)$ of leaves (where a leaf is a vertex of degree 1).
To prove the first result about the number of edges (Theorem \ref{thm.edges}(a)) we also need to know something about numbers of faces.  We next present our results on numbers of edges and faces, Theorems~\ref{thm.edges} and~\ref{thm.faces} and Corollary~\ref{cor.edgesfaces}.
\smallskip

We first consider briefly a fixed surface $S$ and $R^S_n \inu \cE^S$, where the behaviour of $e(R^S_n)$ is well understood~\cite{LabGC,LimitLawsFixedS}.  In this case $e(R^S_n)$ is asymptotically normal with mean $\sim \kappa n$ and variance $\sim \lambda n$, with known constants $\kappa \approx 2.21326$ and $\lambda \approx 0.43034$.
When the surface is not fixed we know far less.
Recall that, by Euler's formula, see equations~(\ref{eqn.Eulersformula}) and~(\ref{eqn.Euler2}), each graph in $\cE^g_n$ has less than $3(n+g)$ edges, see equation~(\ref{maxedges}).  
Theorem~\ref{thm.edges} gives lower bounds on $e(R_n)$, in part (a) for `most' genus functions $g$, and in part (b) for certain $g$ with $g(n) \gg n$.   The bound in (a) may be compared with equation~(\ref{eqn.egeqg}).

\begin{theorem}
\label{thm.edges}
\begin{description}
\item{(a)} If $g(n)$ is $o(n^2)$ then whp $e(R_n) > n+g$.
\m{\cc{We could extend (a) up to $g(n)$ nearly $\frac12 n^2$ - not now.  We could put $\frac1{12} n^2$ quickly using (b), but we could do better}}

\item{(b)}
Let $j \in \N$ and suppose that $\,n^{1+1/(j+1)}\ll g(n) \ll n^{1+1/j}$,
except that when $j=1$ we extend the range to
$n^{3/2} \ll g(n) \leqslant \tfrac1{12} n^2$.
Then whp $e(R_n) \geqslant (1+o(1))\, \tfrac{j+2}{j} \, g$.
\end{description}
\end{theorem}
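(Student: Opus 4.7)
\medskip

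The strategy for both parts is the same in spirit: for the appropriate threshold $m$, bound
\[
\pr(e(R_n) \leq m) \;=\; \frac{|\{G \in \cA^g_n : e(G) \leq m\}|}{|\cA^g_n|}
\]
and show the ratio is $o(1)$. For the numerator I would use the trivial estimate
\[
|\{G \in \cA^g_n : e(G) \leq m\}| \;\leq\; (m+1)\binom{\binom{n}{2}}{m}
\]
(valid for $m \leq \binom{n}{2}/2$) and expand via Stirling. For the denominator I would invoke the explicit lower bounds on $|\cA^g_n|$ established in the companion paper~\cite{MSsizes}; since $\cA^g$ is one of four classes, it suffices to use the bound for the smallest, $\cO\cE^g \cap \cN\cE^g$, so the argument is uniform.

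For part~(a), take $m = n + g$. When $g(n) = O(n)$, one can already win by combining $|\cA^g_n| \geq |\cP_n|$ with the known asymptotics for $R_n \inu \cP$, since a uniformly random planar graph has expected number of edges $\sim \kappa n$ with $\kappa > 1$ and is concentrated around this mean, making planar graphs with $e \leq n+g \leq 2n$ a vanishing fraction. For $\omega(n) = g = o(n^2)$, I would appeal instead to the $g$-dependent lower bound from~\cite{MSsizes} (essentially $|\cA^g_n| \gtrsim \binom{N}{m^*(g)}$ for some $m^*(g) \gg n+g$), so that the ratio with $(n+g+1)\binom{N}{n+g}$ is super-polynomially small throughout the range.

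For part~(b), take $m = (1-\eps)\tfrac{j+2}{j} g$ for an arbitrary $\eps > 0$. Here the \emph{sharp} lower bound on $|\cA^g_n|$ from~\cite{MSsizes}, valid in the window $n^{1+1/(j+1)} \ll g \ll n^{1+1/j}$, is central: it asserts that $|\cA^g_n|$ is at least a positive fraction of $\binom{N}{m^*}$ with $m^* = (1+o(1))\tfrac{j+2}{j}g$. Since $m^* \ll N$ throughout this window, $\binom{N}{m}$ is strictly increasing at $m^*$, and
\[
\frac{\binom{N}{(1-\eps)m^*}}{\binom{N}{m^*}} \;=\; \exp\!\Big(-\Omega\big(\eps\, g \log(N/m^*)\big)\Big),
\]
which tends to zero since $g \gg n$ and $\log(N/m^*) \to \infty$ (using $g = o(n^2)$).

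The main obstacle is in part~(b): one must correctly identify, from the constructions in~\cite{MSsizes}, the one whose \emph{typical} number of edges is precisely $\tfrac{j+2}{j} g$ rather than the absolute maximum $\sim 3g$, and verify the match between the logarithmic exponent of the lower bound and that of the sparse count across the whole window $n^{1+1/(j+1)} \ll g \ll n^{1+1/j}$. The endpoint case $j = 1$ (with $g \leq \tfrac{1}{12}n^2$) is the cleanest, since there $\tfrac{j+2}{j} = 3$ coincides with the asymptotic upper bound $3(n+g)/g \to 3$, so the claim reduces to showing that $R_n$ nearly attains the maximum possible edge count in this regime.
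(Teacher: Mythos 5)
Your part~(b) is essentially the paper's own argument: both rely on the random-genus results of R\"odl--Thomas and Archdeacon--Grable to show that at least half the graphs with $m=\lfloor(1-\tfrac12\eps)\tfrac{j+2}{j}g\rfloor$ edges lie in $\cA^g_n$, and then compare $|\cA^g_n|\geqslant\tfrac12\binom{N}{m}$ (where $N=\binom{n}{2}$) against the count of graphs with at most $m^-=\lfloor(1-\eps)\tfrac{j+2}{j}g\rfloor$ edges; your estimate of the binomial ratio is also correct.

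Your part~(a), however, has a genuine gap. You split the range $g(n)=o(n^2)$ into two cases, but they do not cover it. In the small case you compare against $|\cP_n|$; this works only when $n+g$ is below the typical edge count $\approx 2.21n$ of a random planar graph, i.e.\ roughly when $g(n)\leqslant 1.2\,n$. Once $g(n)\geqslant 2n$, \emph{every} $n$-vertex planar graph has at most $3n-6<n+g$ edges, so $|\{G\in\cP_n:e(G)\leqslant n+g\}|=|\cP_n|$ and the ratio is exactly $1$, not $o(1)$. In the large case you invoke a lower bound $|\cA^g_n|\gtrsim\binom{N}{m^*}$ with $m^*\gg n+g$, attributed to \cite{MSsizes}; but the results of \cite{MSsizes} reproduced here (Theorem~\ref{theorem:gc}) do not supply such a bound, and to extract one from the random-genus literature one must restrict $g$ to a window $n^{1+1/(j+1)}\ll g\ll n^{1+1/j}$ for a \emph{fixed} $j$. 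This leaves uncovered both the transitional regime $g(n)=\Theta(n)$ with a large constant and the near-linear regime $n\ll g(n)\ll n^{1+\eps}$ for every fixed $\eps>0$, where $j$ would have to grow with $n$.

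The paper's proof of part~(a) is structurally different and sidesteps all of this: it never compares counts of sparse graphs. By equation~(\ref{eqn.notin}), whp $R_n\notin\cA^{g(n)-2}_n$, so $R_n$ has a cellular embedding of Euler genus at least $g(n)-1$; Euler's formula then gives $e(R_n)\geqslant n+g+f-\kappa(R_n)-2$ for that embedding; Theorem~\ref{thm.faces}~(a) gives whp $f\geqslant n^2/(15(n+g))\to\infty$; and bridge-addability together with Markov's inequality (via Lemma~\ref{lem.frag}) give whp $f-\kappa(R_n)-2>0$. The conclusion $e(R_n)>n+g$ then follows uniformly over the whole range $g(n)=o(n^2)$. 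To salvage a counting proof of~(a) you would have to establish a lower bound $|\cA^g_n|\gtrsim\binom{N}{m^*}$ with $m^*\gg n+g$ uniformly through the transitional regimes, which is a much harder task than the Euler-formula route.
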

Note that in the inequalities for $e(R_n)$ above we write $g$ rather than $g(n)$ for readability - we shall often do this.
Call an embedding of a graph $G \in \cA^g_n$ \emph{relevant} if the corresponding surface $S$ has Euler genus at most $g(n)$, and $S$ is orientable if $\cA$ is $\cO\cE$, $S$ is nonorientable if $\cA$ is $\cN\cE$, and so on. 
By Euler's formula, each relevant embedding of a graph in $\cA^g_n$ has at most $2(n+g)$ faces, see equation~(\ref{maxfaces}).
Theorem~\ref{thm.faces} gives lower bounds on numbers of faces, in part (a) for `most' genus functions 
$g$, and in part (b) for certain large $g$ as in part (b) of Theorem~\ref{thm.edges}.

\begin{theorem}
\label{thm.faces}
\begin{description}
\item{(a)} If $g(n)$ is $o(n^2)$, then whp the minimum number of faces in a relevant embedding of $R_n$ is at least $\frac{n^2}{15(n+g)}$.

\item{(b)} Let $j \in \N$ and suppose that $\,n^{1+1/(j+1)}\ll g(n) \ll n^{1+1/j}$, except that when $j=1$ we extend the range to
$n^{3/2} \ll g(n) \leqslant \tfrac1{12} n^2$.
Then whp the minimum number of faces in a relevant embedding of $R_n$ is at least $(1+o(1))\, \tfrac2{j}\, g$.
\end{description}
\end{theorem}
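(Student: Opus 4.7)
The plan is to combine Euler's formula with the edge lower bounds of Theorem~\ref{thm.edges}. For any embedding (not necessarily cellular) of a graph $G$ with $n$ vertices and $m$ edges in a surface $S$ of Euler genus $\eps$, each face $F$ is a connected $2$-manifold with boundary and so has $\chi(F) \leq 1$, with equality iff $F$ is a disk. Hence
\[
2 - \eps \,=\, \chi(S) \,=\, n - m + \sum_F \chi(F) \,\leq\, n - m + f,
\]
giving $f \geq m - n + 2 - \eps$. Applied to any relevant embedding of $R_n$, since $\eps \leq g$ we obtain
\[
f \,\geq\, e(R_n) - n + 2 - g.
\]

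For part~(b), substituting the whp lower bound $e(R_n) \geq (1+o(1))\tfrac{j+2}{j}\,g$ from Theorem~\ref{thm.edges}(b) gives $\whp$
\[
f \,\geq\, (1+o(1))\tfrac{j+2}{j}\,g - n + 2 - g \,=\, (1+o(1))\tfrac{2g}{j} - n + O(1).
\]
Because $g \gg n^{1+1/(j+1)}$ with $j$ fixed, $g/n \to \infty$, so the $-n + O(1)$ term is absorbed into the $o(g/j)$ error, yielding $\whp$ that $f \geq (1+o(1))\tfrac{2g}{j}$, as required.

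For part~(a), the bound $e(R_n) > n + g$ from Theorem~\ref{thm.edges}(a) gives only $f > 2$, which falls well short of $n^2/(15(n+g))$. It suffices instead to show $\whp$ that $e(R_n) \geq \tfrac{16}{15}(n+g)$: for then $e(R_n) - n - g \geq \tfrac{1}{15}(n+g) \geq \tfrac{n^2}{15(n+g)}$ (using $n + g \geq n$), and the Euler inequality above finishes the argument. I would obtain this modest strengthening of Theorem~\ref{thm.edges}(a) by sharpening the counting underlying it: compare the lower bound on $|\cA^g_n|$ from~\cite{MSsizes} against an upper bound on the number of graphs in $\cA^g_n$ with $e(G) \leq \tfrac{16}{15}(n+g)$, obtained by enumerating such sparse graphs (e.g.\ as spanning subgraphs of denser embeddable supergraphs of Euler genus at most $g$), and show that the ratio tends to $0$.

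The main obstacle will be establishing this refined edge lower bound in part~(a), particularly in the intermediate regime where $g$ lies between $n$ and roughly $n^{3/2}$ (so that neither the planar-like regime nor the Theorem~\ref{thm.edges}(b) regime is available). Once the stronger edge bound is in hand, Euler's formula finishes both parts routinely.
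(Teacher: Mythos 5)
Your part (b) is essentially correct and matches the paper's approach: combine a whp lower bound on $e(R_n)$ from Theorem~\ref{thm.edges}(b) with the Euler inequality $f \geqslant e - n + 2 - g$, and note that $g \gg n$ so the $-n$ is absorbed. A small bonus of your formulation is that the inequality $\chi(S) \leqslant v - e + f$ holds for arbitrary (not necessarily cellular, not necessarily connected) embeddings, so you dispense with the appeal to Theorem~\ref{thm.conn}(c) which the paper uses to reduce to the connected cellular case. (Do make sure $\chi(F)$ is the Euler characteristic of the bordered compactification of the open face — then $\chi(F)\leqslant 1$ is correct since every face has nonempty boundary.)

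Part (a) is where the proposal breaks down, and the gap is not merely technical but structural. You reduce the claim to a strengthened edge bound $e(R_n) \geqslant \tfrac{16}{15}(n+g)$ whp and propose to prove it by "sharpening the counting underlying Theorem~\ref{thm.edges}(a)." But in the paper the logical dependence runs in the \emph{opposite} direction: Theorem~\ref{thm.faces}(a) is proved first, by a self-contained double-counting argument, and then the proof of Theorem~\ref{thm.edges}(a) \emph{invokes} Theorem~\ref{thm.faces}(a) (it combines the face bound with $R_n \notin \cA^{g-2}$ whp and $\E[\kappa(R_n)]<2$ via Euler's formula). So a route to Theorem~\ref{thm.faces}(a) through a strengthened Theorem~\ref{thm.edges}(a) would have to supply an entirely independent proof of the edge bound, and no such proof is given; the "enumerate sparse graphs as spanning subgraphs of denser ones" sketch is not an argument, and you yourself flag the regime $n \ll g \ll n^{3/2}$ as problematic. (It is also not obvious that $e(R_n)\geqslant \tfrac{16}{15}(n+g)$ is even true throughout that regime: the unconditional lower bound $e(R_n)\geqslant_s \Bin(3n-6,\tfrac12)$ only yields roughly $\tfrac32 n$, which falls below $\tfrac{16}{15}(n+g)$ once $g$ exceeds about $0.4\,n$, and Theorem~\ref{thm.edges}(b) does not kick in until $g\gg n^{3/2}$.) The paper's actual proof of (a) avoids all of this: it considers the class $\cB_n(f)$ of graphs admitting a relevant embedding with at most $f$ faces, allocates each vertex to one incident face, adds a new edge between two vertices in the same face to get a graph in $\cB_n(f+1)$, and compares the number of constructions (at least $\tfrac{n^2}{2f}-\tfrac n2 - 3(n+g)$) with the multiplicity (at most $3(n+g)$); iterating gives $|\cB_n(f_2)|\ll|\cA_n^g|$ for $f_2 = n^2/(15(n+g))$. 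That argument uses only the upper bound $e\leqslant 3(n+g)$ (from Euler's formula) and requires no lower bound on $e(R_n)$ at all.
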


\begin{corollary}
\label{cor.edgesfaces}
If $n^{3/2}\ll g(n)\leqslant \tfrac1{12} n^2$, then whp $e(R_n) = (3+o(1))\,g$; and whp every relevant embedding of $R_n$ has $(2+o(1))\,g$ faces. 
\end{corollary}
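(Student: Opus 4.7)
The plan is to combine the lower bounds supplied by Theorems~\ref{thm.edges}(b) and~\ref{thm.faces}(b) in the $j=1$ case with the deterministic upper bounds from Euler's formula. The hypothesis $n^{3/2}\ll g(n)\leqslant \tfrac1{12}n^2$ is precisely the extended $j=1$ range appearing in both of those theorems, so each applies directly.

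First I would handle the number of edges. Theorem~\ref{thm.edges}(b) with $j=1$ gives whp $e(R_n)\geqslant (1+o(1))\cdot\tfrac{3}{1}\,g = (3+o(1))\,g$. Since $R_n\in\cE^g_n$, equation~(\ref{maxedges}) provides the deterministic upper bound $e(R_n)<3(n+g)$, and because $g\gg n^{3/2}\gg n$ we have $n/g=o(1)$, so $3(n+g)=(3+o(1))\,g$. Combining, whp $e(R_n)=(3+o(1))\,g$.

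Next I would handle faces in an essentially identical way. Theorem~\ref{thm.faces}(b) with $j=1$ says that whp the minimum number of faces over all relevant embeddings of $R_n$ is at least $(1+o(1))\cdot\tfrac{2}{1}\,g = (2-o(1))\,g$; equivalently, whp \emph{every} relevant embedding has at least this many faces. For the matching upper bound, equation~(\ref{maxfaces}) gives at most $2(n+g)$ faces in any relevant embedding, and using $n/g=o(1)$ this is $(2+o(1))\,g$ deterministically. Together, whp every relevant embedding of $R_n$ has $(2+o(1))\,g$ faces.

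There is no real obstacle: the work has been done in Theorems~\ref{thm.edges} and~\ref{thm.faces}, and all that remains is to observe that in the regime $g\gg n$ the Euler-formula upper bounds $3(n+g)$ and $2(n+g)$ already match the lower bounds up to a $(1+o(1))$ factor, so the asymptotics pin down $e(R_n)$ and the face count exactly.
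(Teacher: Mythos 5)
Your proof is correct and follows exactly the same route as the paper: the paper states that the corollary ``follows directly from Theorems~\ref{thm.edges}~(b) and~\ref{thm.faces}~(b) together with the upper bounds we noted before the theorems,'' which is precisely your combination of the $j=1$ lower bounds with the Euler-formula bounds~(\ref{maxedges}) and~(\ref{maxfaces}), using $g\gg n$ to absorb the additive $n$ term.
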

\noindent
This corollary follows directly from Theorems \ref{thm.edges} (b) and \ref{thm.faces} (b) together with the upper bounds we noted before the theorems. It shows that for these genus functions, whp every relevant embedding of $R_n$ is a `near-triangulation',
with a $o(1)$ proportion of edges which are not in triangular faces, and thus a $o(1)$ proportion of faces which are not triangles.
\medskip

\noindent
Consider $R_n \in_u \cA^g$.
It follows directly from Theorem~\ref{thm.edges} that whp $e(R_n)= \Theta(n+g)$ for all $g$ such that $g(n) = O(n^2)$.
Also it follows directly from Theorem~\ref{thm.faces} that if $g(n)=O(n)$ then whp each relevant embedding of $R_n$
has $\Theta(n)$ faces, and if $g(n) \gg n$ then whp each relevant embedding has  $\Theta(g)$ faces. Hence every relevant embedding of $R_n$ has $\Theta(n+g)$ faces.
\smallskip

In order to prove Theorem~\ref{thm.conn} on connectivity, we also need to learn about the number $\ell(R_n)$  of leaves of~$R_n$. Roughly speaking, parts (a) and (b) of the next result say that when $g$ is small there will typically be linearly many leaves, and part (c) says that when $g$ is large there are likely to be few leaves.
Recall that $\rho(\cP)$ is the radius of convergence for the class $\cP$ of planar graphs, and $\rho(\cP) \approx 0.0367284$ by~\cite{Asymptoticformula}.
Let $S$ be any fixed surface and let $R^S_n \inu \cE^S$: then $\ell(R^S_n)$ is asymptotically normal with both mean and variance $\sim \rho(\cP) n$~\cite{Asymptoticformula,LimitLawsFixedS}; and \cc{by Corollary 1.3 in~\cite{PendAppComp}}, for any $\eps>0$
\begin{equation} \label{eqn.planarleaves}
\rho(\cP) -\eps < \ell(R^S_n)/n < \rho(\cP) +\eps \;\; \mbox{  wvhp} \, .
\end{equation}
We cannot be nearly as precise as this when $g(n)$ is not constant (though see Theorem~\ref{thm.heredleaves} (a) below).  Recall from \cite{MSsizes} that if $g(n)=O(n/\log n)$ then $\rho(\cA^g)>0$.

\begin{theorem} \label{thm.leaves}
\begin{description}
\item{(a)} If $g(n)$ is $o(n/\log^3 n)$ and $0<\alpha< \rho(\cP)\,$, then $\ell(R_n) > \alpha n \; \wvhp\,$.

\item{(b)} 
If $g(n)$ is $O(n/\log n)$ (so $\rho(\cA^g)>0$), $g$ is non-decreasing, and $0<\alpha< \rho(\cA^g)$, then
\[\limsup_{n \to \infty}\: \pr(\ell(R_n) > \alpha n) =1. \]

\item{(c)}  If $g(n)$ is $o(n^2)$ then $\ell(R_n) < \frac{3 n^2}{n+g}$ whp.
\m{\cc{extend (c) to larger $g$? - not now?}}
\end{description}
\end{theorem}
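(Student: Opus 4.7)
For parts (a) and (b), the starting point is the leaf-attachment identity
\[
|\{G \in \cA^g_n : n \text{ is a leaf of } G\}| = (n-1)\,b(n),
\]
where $b(n)$ counts graphs on $[n-1]$ of the orientability type specified by $\cA$ and embeddable in a surface of Euler genus at most $g(n)$; attaching a new pendant edge changes neither the Euler genus nor the orientability type. By symmetry $\E[\ell(R_n)] = n(n-1)\, b(n)/|\cA^g_n|$. For (a), the companion paper \cite{MSsizes} (which gives that $\cA^g$ has growth constant $\gamma_\cP$ when $g(n) = o(n/\log^3 n)$), together with the size estimates that compare $b(n)$ to $|\cA^g_n|$ in the same regime, forces the ratio to be $\sim 1/(\gamma_\cP n)$ and hence $\E[\ell(R_n)] \ge (1-o(1))\,\rho(\cP)\, n$. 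For (b), where only $\rho(\cA^g) > 0$ is available, the analogous calculation carried out along a subsequence $(n_i)$ realising the $\limsup$ in the definition of $\rho(\cA^g)^{-1} = \limsup_n (|\cA^g_n|/n!)^{1/n}$ yields $\E[\ell(R_{n_i})] \ge (1-o(1))\,\rho(\cA^g)\, n_i$, and a Markov step then converts this into $\pr(\ell(R_{n_i}) > \alpha n_i) \to 1$, giving the $\limsup$ conclusion.

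For the wvhp strengthening in (a), my plan is a pendant-appearance argument of swap-injection type. Pick $\alpha < \alpha^* < \rho(\cP)$ and $k = \lfloor (\alpha^* - \alpha)n \rfloor$. To each $G \in \cA^g_n$ with $\ell(G) \le \alpha n$, together with an ordered $k$-list of non-leaf vertices $u_1,\dots,u_k$ and attachment targets $v_1,\dots,v_k$, associate the graph obtained by deleting all edges incident to the $u_i$'s and then adding the pendant edges $u_iv_i$. The image lies in $\cA^g_n$ (removing edges and attaching pendants does not raise the Euler genus) and has at least $\alpha^* n$ leaves. The bookkeeping needed to recover the pre-image (the set of swapped vertices together with the list of deleted edges, of which there are at most $3(n+g) = O(n)$ by Euler's formula, and which are all incident to the swapped set) is carefully counted and then matched against the growth constant $\gamma_\cP$. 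With $\alpha^* - \alpha$ chosen small enough, this shows that the number of graphs in $\cA^g_n$ with at most $\alpha n$ leaves is $e^{-\Omega(n)}|\cA^g_n|$.

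For part (c), the plan is a leaf-removal argument. Each $G \in \cA^g_n$ with $\ell(G) \ge k$ is determined by a triple $(S, f, G')$, where $S \subseteq [n]$ is a size-$k$ subset of leaves of $G$, $f \colon S \to [n] \setminus S$ records the unique neighbour of each chosen leaf, and $G'$ is the restriction of $G$ to $[n] \setminus S$, which lies in $\cE^{g(n)}_{n-k}$. Hence
\[
\pr(\ell(R_n) \ge k) \,\le\, \binom{n}{k}(n-k)^k\, |\cE^{g(n)}_{n-k}|\,/\,|\cA^g_n|.
\]
Bounding $|\cE^{g(n)}_{n-k}|$ above by the number of graphs on $n-k$ vertices with at most $3(n-k+g)$ edges via Euler's formula, and lower-bounding $|\cA^g_n|$ using the estimates of \cite{MSsizes}, the choice $k = \lceil 3n^2/(n+g) \rceil$ sends the right-hand side to zero whenever $g(n) = o(n^2)$.

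The main obstacle is the wvhp concentration in (a): the swap injection must be arranged so that the multiplicity on the target side is controlled at the exponential scale, and the Stirling-type arithmetic against $\gamma_\cP$ has to leave enough room for $\alpha$ to be taken arbitrarily close to $\rho(\cP)$. The other parts reduce to relatively routine expected-value and combinatorial counting once the estimates of the companion paper are invoked.
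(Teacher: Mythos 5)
Your proposal departs substantially from the paper's proof, and unfortunately each of the three parts has a gap.

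\textbf{Part (a).} Two issues. First, the expectation computation via $b(n)/|\cA^g_n|$ cannot by itself yield $\E[\ell(R_n)] \geqslant (1-o(1))\rho(\cP)n$: the growth constant controls only $(|\cA^g_n|/n!)^{1/n}$, not the one-step ratio $|\cA^g_n|/(n\,|\cA^{g(n)}_{n-1}|)$, and the latter is precisely the `fixed surface growth ratio' that the paper discusses at length (its regularity is essentially Conjecture~14 of \cite{MSsizes} and is open). Second, the swap injection does not close. You delete \emph{all} edges incident to $k = \lfloor(\alpha^*-\alpha)n\rfloor$ chosen vertices. Those deleted edges can number $\Theta(n)$, and to invert the map one must guess which subset of the up to $kn$ potential edges $\{(u_i,w)\}$ was deleted, contributing a factor of order $\binom{kn}{\Theta(n)}$ to the overcount. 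With $k=\delta n$ this is $n^{\Theta(n)}$, which overwhelms the $n^{2k}=n^{2\delta n}$ degrees of freedom in the construction for any fixed $\delta<1$. The paper avoids all this by invoking the Pendant Appearances Theorem (Theorem~\ref{thm.app}, from \cite{PendAppComp}) via the inequality~(\ref{eqn.pend-g}); that theorem is a delicate generating-function-level statement whose proof does not proceed by the `strip a non-leaf bare' move you propose.

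\textbf{Part (b).} Even granting the expectation bound $\E[\ell(R_{n_i})] \geqslant (1-o(1))\rho\, n_i$ along the subsequence, a Markov step (combined with the deterministic bound $\ell\leqslant n$) gives only $\pr(\ell(R_{n_i})>\alpha n_i) \geqslant (\rho-\alpha)/(1-\alpha) - o(1)$, a positive constant. It does not give convergence to $1$, which is what $\limsup_n \pr(\ell(R_n)>\alpha n)=1$ requires. The paper obtains $\pr(\ell(R_{n_i})<\alpha n_i)=e^{-\Omega(n_i)}$ by applying the Pendant Appearances Theorem along the subsequence where $(|\cA^g_{n_i}|/n_i!)^{1/n_i}\to\rho(\cA^g)^{-1}$; the exponential decay (not a first-moment bound) is essential.

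\textbf{Part (c).} Your bound
\[
\pr(\ell(R_n)\geqslant k) \leqslant \binom{n}{k}(n-k)^k\,|\cA^{g(n)}_{n-k}|\,/\,|\cA^g_n|
\]
is a valid injection, but using only the elementary growth ratio~(\ref{eqn.growth}), $|\cA^h_{m+1}|\geqslant 2m|\cA^h_m|$, it does not decay: iterating gives $|\cA^g_n|\geqslant 2^k(n-1)_k|\cA^{g(n)}_{n-k}|$, and the resulting bound is of order $\binom{n}{k}2^{-k}$, which diverges for $k=\Theta(n)$ since $H(\eps)>\eps$. You would need the sharper ratio $|\cA^h_{n+1}|/|\cA^h_n|\gtrsim 3n+2h$, but that is derived in this paper from Theorem~\ref{thm.edges}, which you would first have to prove. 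The paper's argument instead keeps the vertex set $[n]$ fixed: take $k=\lceil n^2/g\rceil$, choose $k$ of the at least $2k-1$ leaves, delete their pendant edges, and then re-insert those $k$ vertices by subdividing edges of the remainder. Conditioning on $e(R_n)\geqslant n+g$ (Theorem~\ref{thm.edges}) ensures at least $g$ edges remain to subdivide, giving at least $\binom{2k-1}{k}\,g^k\approx(4g)^k$ constructions against at most $\binom{n}{k}n^k\leqslant(eg)^k$ overcount, and $4>e$ wins. The edge-subdivision re-insertion, and the appeal to the lower bound on $e(R_n)$, are the two ideas your plan is missing.
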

We will in fact also give a `lower density' version of Theorem~\ref{thm.leaves} (b), see Lemma~\ref{lemma.pvlowerdensity}, which is stronger in the sense that it asserts the existence of linearly many leaves whp for a subset of integers $n$ with lower density near 1, but weaker in that it only asserts the existence of a suitable constant $\alpha >0$. Our last two theorems in this section concern the maximum degree $\Delta(R_n)$ and the maximum face size.

\begin{theorem} \label{thm.Delta2}
\begin{description}
\item{(a)} There are constants $0 < c_1 < c_2$ such that, if $g(n)$ is $o(n/\log^3 n)$, then 
\[c_1 \log n \leqslant \Delta(R_n) \leqslant c_2 \log n \; \mbox{ whp}. \]
\item{(b)}
Let $g(n)$ be $O(n/\log n)$ and be non-decreasing, and let $0<\eps<1$. Then there are constants $0 < c_1 < c_2$ such that 
\[\limsup_{n \to \infty} \, \pr \left(c_1 \log n \leqslant \Delta(R_n) \leqslant c_2 \log n \right) \geqslant 1-\eps\,. \]
\end{description}
\end{theorem}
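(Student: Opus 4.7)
The strategy is to obtain matching $\Theta(\log n)$ bounds on $\Delta(R_n)$ by adapting methods developed for random planar graphs, leveraging the growth estimates from the companion paper~\cite{MSsizes}. Throughout, write $\gamma$ for the growth constant of $\cA^g$ that exists when $g(n)=o(n/\log^3 n)$.

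For the lower bound in part~(a), I use pendant appearances. Let $F_k$ denote the planar \emph{fan}: a root $r$ joined to every vertex of a path $u_1u_2\cdots u_k$, so $F_k$ has $k+1$ vertices, root-degree $k$, and $|\aut(F_k,r)|=2$ for $k\geqslant 2$. A pendant appearance of $F_k$ at $v\in V(R_n)$ forces $\deg_{R_n}(v)\geqslant k$. Since attaching a pendant $F_k$ preserves embeddability in any surface, a standard first-moment calculation using the growth constant gives
\[
\E[X_k] \;\sim\; \frac{n}{2\gamma^k},
\]
where $X_k$ counts pendant $F_k$-appearances in $R_n$. For $k=c_1\log n$ with any $c_1<1/\log\gamma$ this tends to infinity, and a second-moment bound, using that pendant appearances on disjoint vertex sets are asymptotically independent (deleting the fans keeps us in the same class up to the negligible shift $g(n)\approx g(n-2k)$), yields $X_k\geqslant 1$ whp, hence $\Delta(R_n)\geqslant c_1\log n$ whp.

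For the upper bound in~(a), the goal is to show $\E[\#\{v:\deg_{R_n}(v)\geqslant d\}]\to 0$ for $d=c_2\log n$ with $c_2$ large, which reduces to $|A_{n,d}|/|\cA^g_n|\leqslant C\,q^{d}$ for some $q\in(0,1)$, where $A_{n,d}=\{G\in\cA^g_n:\deg_G(1)\geqslant d\}$. Given $G\in A_{n,d}$, delete all but one edge at vertex~$1$ to obtain $G^\star\in\cA^g_n$ with $\deg_{G^\star}(1)=1$, together with a list of $d-1$ former neighbours. The naive bound $\binom{n-1}{d-1}$ on the number of lists is hopeless, but in any relevant embedding of $G$ the missing neighbours are constrained to lie on the boundary of a face of $G^\star$ containing vertex~$1$ (or on nearby faces in the genus-$g$ surface), so the number of valid reconstructions grows only like $C_1^{d}$ for some constant $C_1$. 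Combined with the growth constant and the pendant-addability inequality $|\cA^g_n|\geqslant n\,|\cA^g_{n-1}|$, this gives the required ratio bound. This counting step is the main obstacle: a purely combinatorial bound loses a factor $\binom{n}{d}=n^{\Omega(d)}$, and one must exploit the rotation-system structure around a high-degree vertex on a small-genus surface to beat it, uniformly over $g(n)=o(n/\log^3 n)$.

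For part~(b), when $g(n)=O(n/\log n)$ is non-decreasing, \cite{MSsizes} provides $\rho(\cA^g)>0$ but not a growth constant. Following the pattern of Lemma~\ref{lemma.connectedLD}, there is a subset $I\subseteq\N$ of lower density arbitrarily close to $1$ on which $|\cA^g_n|/n!$ behaves like $\rho(\cA^g)^{-n}$ up to subexponential factors. Along such $I$, both the pendant-fan lower bound and the face-boundary upper bound go through with $\rho(\cA^g)^{-1}$ in place of $\gamma$, yielding the $\limsup$ conclusion with probability at least $1-\eps$.
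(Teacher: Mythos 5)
Your lower-bound strategy is sound in spirit and close to the paper's: the paper proves (Lemma~\ref{thm.fewautoms}, borrowed from the argument of~\cite{MaxDegree}) that for any family $\cH$ of connected planar graphs with at most exponentially many automorphisms, whp $R_n$ has at least $n^{1-\eps}$ pendant appearances of every $H \in \cH$ with up to $\eta\log n$ vertices, and then applies this with $\cH$ the wheels. Your fans $F_k$ work equally well. Do note, however, that the Pendant Appearances Theorem as stated (Theorem~\ref{thm.app}) is for a fixed $H$; since you need $k \sim c_1\log n$ growing with $n$, the first/second-moment calculation has to be done with explicit uniform control over $k$, which is precisely what the argument of~\cite{MaxDegree} supplies and what your ``standard first-moment calculation'' glosses over.

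The real gap is the upper bound. Your proposed fix for the loss of the $\binom{n}{d}$ factor --- that the deleted neighbours of vertex~$1$ must lie on the boundary of a bounded collection of faces of $G^\star$ in some relevant embedding, giving only $C_1^d$ valid reconstructions --- is not a valid argument as stated. After you delete the $d-1$ edges, the surface can be rechosen entirely, the embedding of $G^\star$ need not resemble that of $G$, and when $g(n)$ grows (even as $o(n/\log^3 n)$) there is no bounded ``neighbourhood of faces'' around vertex~$1$ to constrain the former neighbours. You have not given any mechanism to encode a subset of $d-1$ vertices in $C_1^d$ bits, and without one the estimate collapses back to $n^{\Omega(d)}$. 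The paper avoids this entirely by a different double-counting. It first proves (Lemma~\ref{lem.fewpend}) that whp each vertex of $R_n$ is adjacent to $\leqslant 2\log n/\log\log n$ leaves, and that whp $R_n$ has $\geqslant \alpha n$ leaves. Lemma~\ref{lem.Delta} then shows: from a graph with a vertex $v$ of degree $\geqslant b(n) = \lceil (8/\alpha)\log n\rceil$ and $\geqslant\alpha n$ leaves, you can detach a leaf elsewhere and re-attach it as a new leaf at $v$; the ``few leaves per vertex'' condition limits double-counting, so these graphs form a vanishing fraction of $\cE^S_n$, \emph{uniformly over all surfaces} $S$. This bypasses face structure entirely. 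Your part~(b) sketch (restrict to a lower-density set $I^*$ where growth is well-behaved and rerun both halves) matches the paper's route via Lemmas~\ref{lemma.pvlowerdensity}, \ref{lemma.maxdegLDupperbound}, \ref{lemma.pendcopiesH} and \ref{lemma.maxdeglowerdensity}, but of course it inherits the same gap in the upper bound.
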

Much as with Theorem~\ref{thm.leaves} (b), we shall in fact prove a stronger `lower density' version of Theorem~\ref{thm.Delta2} (b), see Lemma~\ref{lemma.maxdeglowerdensity}. 
The next theorem concerns maximum face size.  We measure the size of a face by the length of the facial walk (that is, the number of times we traverse an edge).  Recall that we defined relevant embeddings just before Theorem~\ref{thm.faces}.
\begin{theorem} \label{thm.facesize}
\begin{description}
\item{(a)} There exist constants $0 < c_1 < c_2$ such that, if $g(n)$ is $o(n/\log^3 n)$, then whp, in every relevant embedding of $R_n$, the maximum face size is at least $c_1 \log n$ and at most $c_2 \log n$.
\item{(b)}
Let $g(n)$ be $O(n/\log n)$ and be non-decreasing, and let $0<\eps<1$. Then there are constants $0 < c_1 < c_2$ such that the following holds. For $n \in \N$ let $p_n$ be the probability that, in every embedding of $R_n$ in a surface of Euler genus at most $g(n)$, the maximum face size is at least $c_1 \log n$ and at most $c_2 \log n$.  Then $\limsup_{n \to \infty} p_n \geqslant 1-\eps$.
\m{check: can we get $\limsup = 1$?}
\end{description}
\end{theorem}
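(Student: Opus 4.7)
The plan is to parallel Theorem~\ref{thm.Delta2}, using pendant subtrees in place of high-degree vertices as the mechanism producing the lower bound on the maximum face size.

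\textbf{Lower bound for (a).} The key observation is that if $G$ contains a pendant subtree $T$ --- a subtree attached to the rest of $G$ by a single bridge --- then in \emph{every} embedding of $G$ the face containing $T$ has size at least $2|E(T)|$, because each edge of $T$ is traversed twice on the facial walk. Thus it suffices to show that $R_n$ whp contains a pendant path of length at least $(c_1/2)\log n$. When $g(n)=o(n/\log^3 n)$ the class $\cA^g$ has a growth constant by \cite{MSsizes}, and the pendant-appearance technique of \cite{PendAppComp} --- already used in Theorem~\ref{thm.leaves}(a) --- adapts to growing pendant structures: reserve $\Theta(n/\log n)$ disjoint candidate intervals in $[n]$, each of length $(c_1/2)\log n + 1$; for each, the event that it forms a pendant path attached to the rest of $R_n$ has probability at least roughly $\rho(\cA^g)^{\Theta(\log n)}$, which is a negative power of $n$ still much larger than $1/n$ provided $c_1$ is chosen small enough; near-independence across the intervals, via a standard second-moment argument, then yields at least one such pendant path whp.

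\textbf{Upper bound for (a).} We mimic the upper-bound argument for $\Delta(R_n)$ in Theorem~\ref{thm.Delta2}(a), now exploiting ``addability within a face'' of $\cA^g$ in place of addability at a vertex. Given a graph $G\in\cA^g_n$ with a face $F$ of size $L$ in some relevant embedding, one can adjoin a chord or small pendant configuration inside $F$ and remain in $\cA^g_n$; by selecting the modification so that different choices yield distinct graphs in $\cA^g_n$ and any resulting graph admits at most polynomially many decompositions as (base graph + modification), one obtains that the fraction of $G \in \cA^g_n$ admitting some relevant embedding with a face of size $\geq c_2\log n$ is at most $n^{O(1)}\cdot\gamma_{\cA^g}^{-\Omega(\log n)}$, which is $n^{-\Omega(1)}$ once $c_2$ is large enough.

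\textbf{Part (b) and main obstacle.} For part~(b), $g(n)=O(n/\log n)$ only gives $\rho(\cA^g)>0$ rather than a growth constant, so we proceed exactly as in Theorem~\ref{thm.Delta2}(b) and Lemma~\ref{lemma.maxdeglowerdensity}, passing to a subset $I\subseteq\N$ of lower density arbitrarily close to $1$ on which $|\cA^g_n|$ is regular enough to carry through both the pendant-path appearance argument and the face-modification injection; this yields the stated $\limsup$ bound. The main obstacle is the injection used in the upper bound: the facial walk of the large face may revisit vertices, so that putative chord positions can coincide with existing edges or with each other, and the bookkeeping of preimages is delicate. The modification must be arranged so that each of the $2^{\Omega(\log n)}$ choices yields a distinct simple graph in $\cA^g_n$ while the number of decompositions of any given extended graph back into (base graph, modification) remains polynomially bounded; getting this balance right while handling the non-simple face boundary is where most of the work lies.
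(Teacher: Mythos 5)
Your lower-bound idea is essentially the paper's: a long pendant path (equivalently, a pendant appearance of a long path) forces a face of size $\Omega(\log n)$ in every embedding. However, the paper obtains the existence of such a path via a direct double-counting argument (Lemma~\ref{thm.fewautoms}, adapting Theorem 3.1 of~\cite{MaxDegree}), whereas you appeal to ``near-independence across reserved intervals'' and a second-moment argument. For a uniform random graph from a complicated class like $\cA^g_n$ — which is not a product measure — such near-independence is not a standard fact and would require its own justification; the paper sidesteps this entirely by counting.

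The upper bound is where your proposal has a genuine gap, and it is the one you correctly flag. You propose adjoining chords or ``small pendant configurations'' inside a large face and then arguing an injection, but you have no source of new vertices (a chord adds none; a pendant configuration would need vertices from somewhere), and the bookkeeping for non-simple face boundaries is unresolved in your sketch. The paper's argument is quite different and cleaner: it exploits that $R_n$ has linearly many leaves (Theorem~\ref{thm.leaves}(a)). Given $G \in \cA^g_n$ with $\geqslant \alpha n$ leaves and a relevant embedding with a face $F$ of size $\geqslant c\log n +1$, one \emph{deletes} $k+1 = \log n +1$ leaves $v_0,\ldots,v_k$ (shrinking $F$ by at most $k+1$), then reinserts $v_1,\ldots,v_k$ to subdivide $k$ chosen boundary edges of the remaining face $F'$, places $v_0$ in the interior of $F'$, and joins $v_0$ to $v_1,\ldots,v_k$. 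The resulting $G'' \in \cA^g_n$ can be reconstructed from $(G'', v_0)$ essentially uniquely (the subdivision vertices are the neighbours of $v_0$, ordered lexicographically by the edges they subdivide), so $G''$ has at most $n^{k+2}$ preimages, while each $G$ yields $\gtrsim (\alpha n)^{k+1}\binom{(c-1)\log n}{k}$ constructions; choosing $c > e/\alpha + 1$ makes this ratio $o(1)$. The crucial idea you are missing is to \emph{redeploy the abundant leaves} as the extra material inside the face — that is what makes the injection clean and avoids the multi-edge/loop problems your chord proposal runs into. Your part~(b) outline (pass to a subset of lower density near~1 and rerun the argument) does match the paper's strategy, but it inherits the same unresolved upper-bound step.
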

As with Theorem~\ref{thm.Delta2} (b), we shall in fact prove a stronger `lower density' version of Theorem~\ref{thm.facesize} (b), see Lemma~\ref{facesize_lowerdensity}.


\subsection{Random graphs from a hereditary embeddable class, $R_n \inu \hered(\cA^g)$}
\label{subsec.heredembed}

Given a class $\cB$ of graphs, we say that a graph $G$ is \emph{hereditarily} in $\cB$ if for each nonempty set $W$ of vertices the induced subgraph $G[W]$ is in  $\cB_{|W|}$; and we let $\hered(\cB)$ be the class of graphs which are hereditarily in $\cB$.
Given a genus function $g$ we are interested here in $\hered(\cA^g)$, the class of graphs which are hereditarily in $\cA^g$.
Since $\cP\subseteq \hered(\cA^g) \subseteq \cA^g$, we always have $\rho(\cP)\geqslant \rho(\hered(\cA^g))$, and Theorem~1(a) of \cite{MSsizes} (see Theorem~\ref{theorem:gc} (a) below) shows that $\hered(\cA^g)$ has growth constant $\gamma_{\cP}$ as long as $g(n) = o(n/\log^3\!n)$. We shall see that, if $g$ is sufficiently small then the fragment of a random graph $R_n$ in $\hered(\cA^g)$ looks like that from a random planar graph, whereas if $g$ is a little larger then whp $R_n$ is connected.
Recall that the fragment of a graph is the subgraph induced on the vertices not in the largest component.

In this section we present results on the fragment, the connectedness and the number $\ell(R_n)$ of 
leaves of $R_n\inu\hered(\cA^g)$, corresponding to Theorems~\ref{thm.conn} and~\ref{thm.leaves} in the embeddable case.  (We consider only these properties here.  See Lemma~\ref{lem.HFragplanar} for an analogue of Theorem~\ref{thm.Fragplanar}.)
Recall that, if $Y$ and $X_1,X_2,\ldots$ are random variables in some probability space, then $X_n$ \emph{converges in total variation} to $Y$ if for any $\eps>0$ there exists $N$ such that for every $n \geqslant N$, for every event $A$ we have $|\pr(X_n \in A) - \pr(Y \in A)|< \eps$.
This is a `uniform' form of convergence in distribution.
See Section~\ref{subsec.BPPRG} for the definition of the Boltzmann Poisson random planar graph $BP(\cP,\rho(\cP))$.  Recall that $C(x)$ is the exponential generating function of the class of connected planar graphs, and $p^*=e^{-C(\rho(\cP))} \approx 0.963$ is the limiting probability that a random planar graph is connected.

\begin{theorem} \label{thm.heredfrag}
Let the genus function $g(n)$ be $o(n/\log^3n)$,
and let $R_n \inu \hered(\cA^g)$.
Then $\Frag(R_n)$ converges in total variation
to $BP(\cP,\rho(\cP))$ as $n \to \infty$.  In particular, $\kappa(R_n)-1$ is distributed asymptotically as a Poisson law of parameter $C(\rho(\cP))$, and 
\[ \pr(R_n \mbox{ is connected}) \to p^* \mbox{ as } n \to \infty.\]
\end{theorem}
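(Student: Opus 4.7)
The plan is to establish joint Poisson convergence of the component counts by the method of moments, and then upgrade this to total variation convergence using tightness of $\frag(R_n)$. For each fixed connected planar graph $H$, let $N_H(R_n)$ denote the number of components of $R_n$ isomorphic to $H$. The aim is to show that $(N_{H_1}(R_n), N_{H_2}(R_n),\ldots)$ converges jointly to a vector of independent Poisson random variables with parameters $\rho(\cP)^{v(H_i)}/\aut(H_i)$, exactly matching the component counts of $BP(\cP,\rho(\cP))$, and that no non-planar connected graph contributes in the limit.

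The first step is a counting identity. Assuming $g$ is non-decreasing (following the paper's standing convention), the key structural observation is that $\hered(\cA^g)$ is closed under disjoint union with planar graphs: if $G'\in\hered(\cA^g)$ and $P$ is planar, then an induced subgraph $(G'\cup P)[W_1\cup W_2]$ is the disjoint union of $G'[W_1]$ (embeddable in Euler genus $\leq g(|W_1|)$) with the planar $P[W_2]$, and such a union embeds in genus $\leq g(|W_1|)\leq g(|W_1|+|W_2|)$. Consequently, for each fixed connected planar $H$ and each $r\geq 1$, the $r$-th factorial moment of $N_H$ is given exactly by
\[ \E\bigl[N_H^{(r)}(R_n)\bigr] \;=\; \frac{n!}{(n-rv(H))!\,\aut(H)^r}\cdot\frac{|\hered(\cA^g)_{n-rv(H)}|}{|\hered(\cA^g)_n|}, \]
with an analogous product identity for joint factorial moments over finitely many distinct $H_i$.

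The main technical ingredient is a smoothness (ratio) lemma: for each fixed $k\geq 1$,
\[ \frac{n!}{(n-k)!}\cdot\frac{|\hered(\cA^g)_{n-k}|}{|\hered(\cA^g)_n|}\;\longrightarrow\;\rho(\cP)^k \quad\text{as } n\to\infty, \]
or, equivalently, writing $a_n=|\hered(\cA^g)_n|/n!$, that $a_{n-1}/a_n\to\rho(\cP)$. Theorem~1(a) of \cite{MSsizes} only gives $a_n^{1/n}\to 1/\rho(\cP)$, which is strictly weaker, so finer estimates from the companion paper will be needed. A pendant-vertex attachment (adding an isolated or degree-one vertex, which preserves $\hered(\cA^g)$ for non-decreasing $g$) yields the easy lower bound $|\hered(\cA^g)_n|\geq n\cdot|\hered(\cA^g)_{n-1}|$; the matching upper bound on consecutive ratios is where I expect the main obstacle, since controlling the subexponential correction uniformly across the whole range $g(n)=o(n/\log^3n)$ is delicate and seems to need a leaf-abundance or bridge-type argument tuned to the hereditary setting.

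Once the ratio lemma is in place, the factorial-moment identities give $\E[N_H^{(r)}(R_n)]\to(\rho(\cP)^{v(H)}/\aut(H))^r$ and the joint versions similarly, so the method of moments delivers joint Poisson convergence with independence for component counts indexed by connected planar $H$. Contributions from non-planar connected $H$ are negligible by Theorem~\ref{thm.Fragplanar}: the fragment is planar whp, and a fixed non-planar $H$ cannot be the giant component for large $n$, so it can only appear via a non-planar fragment, with vanishing probability. Tightness of $\frag(R_n)$ (needed to upgrade distributional convergence to total variation convergence on the countable state space of finite unlabelled graphs) follows by the same factorial-moment identities together with the convergent series $\sum_{H\text{ conn.\ planar}} v(H)\,\rho(\cP)^{v(H)}/\aut(H)=\rho(\cP)\,C'(\rho(\cP))<\infty$. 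The statements about $\kappa(R_n)-1$ being asymptotically Poisson with parameter $C(\rho(\cP))$ and $\pr(R_n\text{ is connected})\to p^*=e^{-C(\rho(\cP))}$ then follow at once by summing the independent Poisson components of $BP(\cP,\rho(\cP))$.
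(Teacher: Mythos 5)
Your approach is genuinely different from the paper's, but it has a real gap at its centre, and you have already correctly identified where it is. The paper's proof does not go via component counts and the method of moments. Instead, it first uses the Pendant Appearances Theorem (Theorem~\ref{thm.app}, equivalently Corollary 1.2 of~\cite{PendAppComp}, and the variant~(\ref{eqn.pend-g})) to establish the two\mbox{-}sided bound
\[ (1-\eps)\, \alpha_H\, n \;\leqslant\; \pend(R_n,H) \;\leqslant\; (1+\eps)\, \alpha_H\, n \quad \wvhp \]
for every fixed connected planar $H$ (with $\alpha_H=h\rho(\cP)^h/\aut H$). It then invokes Lemma~5.6 of~\cite{PendAppComp}, a purpose-built black box whose hypotheses are exactly such two-sided pendant-appearance concentration, and whose conclusion is total variation convergence of the fragment to the Boltzmann Poisson random graph. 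That machinery deliberately avoids any need to control $|\hered(\cA^g)_n|/\bigl(n\,|\hered(\cA^g)_{n-1}|\bigr)$.

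The gap in your proposal is the ``ratio lemma'' $a_{n-1}/a_n\to\rho(\cP)$, where $a_n=|\hered(\cA^g)_n|/n!$. This is strictly stronger than the growth-constant statement from~\cite[Theorem~1(a)]{MSsizes}, and it is not established in the paper, its companion, or~\cite{PendAppComp}; you flag this yourself but offer only a speculative ``leaf-abundance or bridge-type argument.'' In fact the difficulty is essentially intrinsic: once you have the exact factorial-moment identity you wrote down, the full theorem (TV convergence to $BP(\cP,\rho(\cP))$) \emph{implies} the ratio lemma, so the ratio lemma is equivalent in strength to most of what you are trying to prove, and proving it directly is the entire problem. By contrast, the paper can prove TV convergence without first proving consecutive-ratio convergence, because a two-sided concentration of pendant appearances is a genuinely weaker input than ratio convergence yet is enough for~\cite[Lemma 5.6]{PendAppComp}. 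The paper even highlights that the analogous ratio statement for the non-hereditary classes $\cA^g$ is only a conjecture (Conjecture~14 of~\cite{MSsizes}). Your structural observation that $\hered(\cA^g)$ is closed under disjoint union with a planar graph (for $g$ non-decreasing) is correct and is also used in the paper, in the proof of Theorem~\ref{thm.heredleaves}~(c), but it is not enough to deliver the ratio convergence you need.

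Everything else in your sketch is sound: the exact factorial-moment identity for component counts is correct, the reduction to joint moments, the use of Theorem~\ref{thm.Fragplanar} to dismiss nonplanar fragments, and the tightness argument via $\rho\,C'(\rho)<\infty$. But none of these steps produces the ratio lemma, which remains the missing core. If you wish to pursue a moments-style argument, the way to make it rigorous would be to replace the ratio lemma with a pendant-appearances input and re-derive the moment convergence from that, which in effect is what Lemma~5.6 of~\cite{PendAppComp} packages up.
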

This theorem is much more precise than the corresponding result Theorem~\ref{thm.conn} (a) for the non-hereditary case.  Note in particular that it implies that $\Frag(R_n)$ is planar whp.
When $g$ is a little larger, connectivity undergoes 
a change in behaviour when $g(n)$ grows beyond about $n/\log n$.
\begin{theorem} \label{thm.heredconn}
Let $g$ be a genus function, and let $R_n \inu \hered(\cA^g)$. 
\begin{description}
\item{(a)}
If $g(n)$ is $O(n/\log n)$ and is non-decreasing then
\[\limsup_{n \to \infty} \, \pr(R_n \mbox{ is connected}) < 1\, . \]
\item{(b)}
If $g(n) \gg n/\log n$ then 
\[\limsup_{n \to \infty} \, \pr(R_n \mbox{ is connected}) =1. \]
\item{(c)}
If $g(n+1) \geqslant g(n) + 2$ for all $n \in \mathbb{N}$, then $R_n$ is connected whp.
\end{description}
\end{theorem}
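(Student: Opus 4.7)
The plan is to handle parts (a), (b), (c) separately, each leveraging a different closure property of $\hered(\cA^g)$ implied by the hypothesis on $g$, together with size estimates from the companion paper \cite{MSsizes}.

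For part (a), I first establish the \emph{planar-additivity} of the class: since $g$ is non-decreasing, for any $H\in\hered(\cA^g)_{n-k}$ and any planar graph $F$ on a disjoint $k$-set of new labels, the disjoint union $H\cup F$ lies in $\hered(\cA^g)_n$, because for every $W$, $(H\cup F)[W]$ has Euler genus $\mathrm{eg}(H[W\cap V(H)])\leq g(|W\cap V(H)|)\leq g(|W|)$. Writing $a_m=|\hered(\cA^g)_m|/m!$, a standard counting using this additivity shows that for any fixed connected planar $F$ on $k$ vertices, the number $X_F$ of $F$-components of $R_n$ satisfies $\mathbb{E}[X_F]=a_{n-k}/(|\mathrm{Aut}(F)|\,a_n)$ and $\mathbb{E}[X_F(X_F-1)]=a_{n-2k}/(|\mathrm{Aut}(F)|^2\,a_n)$. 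Taking $F=K_1$ and invoking estimates from \cite{MSsizes} which, for $g(n)=O(n/\log n)$, bound $a_{n-1}/a_n$ from below and $a_{n-2}/a_n$ from above, a Paley--Zygmund argument yields $\liminf_n \pr(R_n\text{ has an isolated vertex})>0$, hence $\limsup_n\pr(R_n\text{ is connected})<1$.

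For part (b), the hypothesis $g(n)\gg n/\log n$ forces, via \cite{MSsizes}, a subsequence $(n_j)$ along which $a_{n_j-k}/a_{n_j}\to 0$ for each fixed $k$. Planar-additivity and the counting of (a) then give $\mathbb{E}[X_F(R_{n_j})]\to 0$ for every fixed connected planar $F$. Combined with the hereditary analogue Lemma~\ref{lem.HFragplanar} (which says $\Frag(R_n)$ is planar whp) and a uniform tail bound on the fragment size from the same estimates, this yields $\pr(\frag(R_{n_j})\geq 1)\to 0$, so $\limsup_n \pr(R_n\text{ is connected})=1$.

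For part (c), the hypothesis $g(n+1)\geq g(n)+2$ yields an even stronger closure: for any $G\in\hered(\cA^g)_n$ and any pair $\{u,v\}\subseteq V(G)$, attaching a new vertex $w$ (labeled $n+1$) adjacent to both $u$ and $v$ produces $G'\in\hered(\cA^g)_{n+1}$, because for $W\ni w$ the graph $G'[W]$ comes from $G[W\setminus\{w\}]$ by adding a vertex of degree at most $2$, which raises Euler genus by at most $2$, while $g(|W|)\geq g(|W|-1)+2$ by hypothesis. The map $(G,\{u,v\})\mapsto G'$ is injective, so $a_{n+1}\geq \binom{n}{2}a_n/(n+1)$, and iterating gives $a_{n-k}/a_n=O_k(n^{-k})$ for each fixed $k$. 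Bounding the number of disconnected graphs by a smallest-component decomposition and combining with this decay of $a_{n-k}/a_n$ gives $\pr(R_n\text{ disconnected})\to 0$. The main obstacle is (a): unlike the narrower range of Theorem~\ref{thm.heredfrag}, full Boltzmann--Poisson asymptotics are unavailable for $g(n)=O(n/\log n)$, so one must extract from \cite{MSsizes} enough regularity of $a_{n-1}/a_n$ to run the second-moment argument uniformly in $n$, not just along a subsequence; a secondary technical point in (c) is handling the large-$k$ tail of the smallest-component sum, which may require a separate multi-bridge-addability argument (using that adding an edge between two components never increases any subgraph's Euler genus).
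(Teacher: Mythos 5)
Your proposal takes a genuinely different route in all three parts, but it has real gaps, some of which you flag. The central tool you are missing is the paper's Lemma~\ref{lem.baddprob}: for a bridge-addable class closed under deleting all the edges of a component (which $\hered(\cA^g)$ is), one has $\pr(R_n \text{ connected}) \geqslant 1-\eps - c(\eps)\,\pr(\frag(R_n)=1)$. This reduces control of connectedness entirely to control of the single quantity $\pr(\frag(R_n)=1)$, and is exactly the ``multi-bridge-addability argument'' you gesture at in part (c). With it, the large-$k$ tail in your smallest-component sum becomes irrelevant: in (c) your injective construction gives $a_{n-1}/a_n = O(1/n)$, and then $\pr(\frag=1) \leqslant a_{n-1}/a_n \to 0$ (your planar-additivity count with $F=K_1$ gives exactly $\pr(\frag=1) \leqslant \E[X_{K_1}]=a_{n-1}/a_n$), so Lemma~\ref{lem.baddprob} finishes. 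The paper reaches the same bound on $\pr(\frag=1)$ via a leaf-count argument (Theorem~\ref{thm.heredleaves}(c)) rather than via the $a_{n-k}/a_n$ decay, but the logical skeleton is the same once Lemma~\ref{lem.baddprob} is in place.

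For part (b) there is a concrete error: you invoke Lemma~\ref{lem.HFragplanar}, whose hypothesis is $g(n)=o(n)$, in a regime $g(n)\gg n/\log n$ that includes $g(n)$ much larger than $n$; the lemma simply does not apply. The good news is you do not need it: the bound $\pr(\Frag(R_n)=H) \leqslant a_{n-h}/(\aut(H)\,a_n)$ holds for \emph{every} $h$-vertex graph $H$, not just planar ones, because the map taking a graph with $\Frag=H$ to its giant component is injective into $\hered(\cA^g)_{n-h}$ (you do not need to reconstruct $G$ from $H$ and the giant, only to count). Combined with $\E[\frag(R_n)]<2$ (Lemma~\ref{lem.frag}) and your subsequence where $a_{n-1}/a_n \to 0$, this closes (b) directly. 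The paper instead runs the product-telescoping inequality $\prod_j \pr(\frag(R_j)=1) \leqslant (n!/n_0!)\,|\hered(\cA^g)_{n_0}|/|\hered(\cA^g)_n|$ and then applies Lemma~\ref{lem.baddprob}; both routes are fine.

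For part (a), the second-moment idea is workable, but you misdiagnose what is needed. The paper's own proof only establishes a statement along a subsequence (it proves $\pr(R_{n_i}\text{ disconnected}) \geqslant \alpha/3+o(1)$ for some sequence $n_i\to\infty$), and that is all your Paley--Zygmund argument needs to deliver. Note that $a_{m-1}\leqslant a_m$ (add an isolated vertex, using $g$ non-decreasing), so $a_{n-2}/a_n\leqslant 1$ always, making the second-moment denominator trivially bounded; and since $\liminf_n a_n^{-1/n}=\rho>0$ while $a_n^{-1/n}$ is the geometric mean of $(a_{m-1}/a_m)_{m\leqslant n}$, one cannot have $a_{m-1}/a_m\to 0$, so there is a subsequence with $a_{m-1}/a_m\geqslant c>0$. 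Along it Paley--Zygmund gives $\pr(R_n \text{ has an isolated vertex})\geqslant c^2/2$, i.e.\ $\liminf_n\pr(R_n\text{ connected})<1$. You aim instead for $\liminf_n\pr(\text{isolated vertex})>0$, which would require uniform control you (correctly) observe is unavailable; switch to $\limsup$ and the argument closes. The paper avoids all of this by taking a subsequence on which $a_n^{1/n}\to\rho^{-1}$, applying the Pendant Appearances Theorem to get linearly many leaves whp there, and then deleting a random leaf-edge to produce many disconnected graphs; this is a stronger but less elementary route than your second-moment method.
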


Part (c) of the next result shows that there are likely be very few leaves when for example $g(n) = an$ with $a \geqslant 2$.  We obtain much tighter bounds than those in Theorem~\ref{thm.leaves} for the embeddable case.  Recall that for $x \in \R$ and $t \in \N$, $x_{(t)}$ denotes the product $x(x-1)\cdots(x-(t-1))$.
\begin{theorem} \label{thm.heredleaves}
Let $g$ be a genus function
and let $R_n \inu \hered(\cA^g)$. 
\begin{description}
\item{(a)}
If $g(n)$ is $o(n/\log^3 n)$ then for any $\eps>0$
\[ (1 - \eps)\, \rho(\cP)\, n < \ell(R_n) < (1 + \eps)\, \rho(\cP)\, n \;\;\; \wvhp.
\]
\item{(b)} Let $g(n)$ be $O(n/\log n)$ (so $\rho:=\rho(\hered(\cA^g))>0$), and let $g$ be non-decreasing.
Then for any $\eps>0$ 
\[\limsup_{n\rightarrow \infty} \, \mathbb{P}(\ell(R_n)> (1\!-\!\eps)\,\rho  n) = 1 \; \mbox{ and } \; \limsup_{n\rightarrow \infty} \, \mathbb{P}(\ell(R_n)< (1\!+\!\eps)\, \rho n) = 1\,.\]
\item{(c)} If $g(n+1) \geqslant g(n) + 2$ for all $n \in \mathbb{N}$, then $\E[\ell(R_n)]  \leqslant 2 +o(1)$; and indeed for each $t \in \N$ we have $\E[\ell(R_n)_{(t)}]  \leqslant 2^t +o(1)$.
\end{description}
\end{theorem}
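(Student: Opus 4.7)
The proofs of all three parts rest on a common starting point: when $g$ is non-decreasing (as it is under each hypothesis), $\hered(\cA^g)$ is closed under attaching a pendant vertex, because adding a leaf leaves the Euler genus of every induced subgraph unchanged. By symmetry over vertex labels this yields the identity
\[
\E[\ell(R_n)] \;=\; \frac{n(n-1)\,|\hered(\cA^g)_{n-1}|}{|\hered(\cA^g)_n|},
\]
with analogous identities for higher factorial moments obtained by iterating the leaf-attachment injection. Each part therefore reduces to controlling the ratio $|\hered(\cA^g)_n|/|\hered(\cA^g)_{n-1}|$.

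For part (a), the class $\hered(\cA^g)$ is bridge-addable (adding a bridge between components preserves genus of every induced subgraph) and, by Theorem~\ref{theorem:gc}(a), has growth constant $\gamma_\cP$, so $\rho(\hered(\cA^g))=\rho(\cP)$. Both the wvhp lower bound $\ell(R_n)>(1-\eps)\rho(\cP)n$ and the matching wvhp upper bound then follow from the pendant appearances machinery of \cite{PendAppComp} used to establish (\ref{eqn.planarleaves}), which gives two-sided wvhp concentration for the number of pendant appearances of any fixed rooted connected graph in a random sample from a bridge-addable class with a growth constant; a leaf is precisely a pendant appearance of $K_2$ rooted at its non-leaf endpoint, so the leaf count concentrates wvhp around the natural scale $\rho(\cP)n$. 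Part (b) is the same argument carried out only along a subsequence $\{n_k\}$ realising the limsup of $(|\hered(\cA^g)_n|/n!)^{1/n}$, which accounts for the weakening from ``whp'' to ``limsup'' in the conclusion and mirrors the proof of Theorem~\ref{thm.leaves}(b). The main obstacle in these parts is transferring the pendant-appearance concentration available for planar graphs to the wider class $\hered(\cA^g)$, using bridge-addability and the growth constant alone.

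For part (c), the stronger condition $g(n+1)\geqslant g(n)+2$ lets us attach more than leaves. The key claim is that for any $G'\in\hered(\cA^g)_{n-1}$ and any new vertex $v$ joined to an arbitrary set of $d\in\{0,1,2\}$ vertices of $G'$, the resulting graph $G$ belongs to $\hered(\cA^g)_n$. To see this, fix $W\subseteq[n]$ with $v\in W$; an embedding of $G'[W\setminus\{v\}]$ in a surface of Euler genus at most $g(|W|-1)$ extends to $G[W]$ at an Euler-genus cost of at most $2$ (place $v$ in a face incident to one chosen neighbour, then route any second edge, which in the worst orientable case requires a handle contributing $+2$ Euler genus), and $g(|W|-1)+2\leqslant g(|W|)$ by hypothesis. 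Summing over $d$ gives the injection
\[
|\hered(\cA^g)_n| \;\geqslant\; \Bigl(1+(n-1)+\tbinom{n-1}{2}\Bigr)|\hered(\cA^g)_{n-1}| \;=\; \Bigl(\tbinom{n}{2}+1\Bigr)|\hered(\cA^g)_{n-1}|,
\]
and substituting into the ratio identity immediately yields $\E[\ell(R_n)] \leqslant n(n-1)/(\tbinom{n}{2}+1) = 2 - o(1)$. For the $t$-th factorial moment, every $G\in\hered(\cA^g)_n$ with specified leaves $v_1,\dots,v_t$ is determined by $G':=G[[n]\setminus\{v_1,\dots,v_t\}]\in\hered(\cA^g)_{n-t}$ together with, for each $v_i$, a choice of either its unique neighbour in $[n]\setminus\{v_1,\dots,v_t\}$ or a partner $v_j$ (in which case $\{v_i,v_j\}$ is an isolated $K_2$); the no-pair configurations contribute the dominant $(n-t)^t$ term. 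Iterating the ratio bound gives $|\hered(\cA^g)_n|/|\hered(\cA^g)_{n-t}| \gtrsim (n_{(t)})^2/2^t$, and combining yields $\E[\ell(R_n)_{(t)}] \leqslant 2^t + o(1)$. The main technical point in part (c) is verifying the $+2$ Euler-genus bound uniformly across the four families $\cO\cE^g, \cN\cE^g, \cE^g, \cO\cE^g\cap\cN\cE^g$: the orientable case is binding and is precisely what forces the $+2$ slack in the hypothesis on $g$.
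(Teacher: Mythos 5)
Your proofs of parts (a) and (b) follow essentially the same route as the paper: for (a), apply the Pendant Appearances Theorem with $H$ a single vertex, getting the lower bound from attachability and the matching upper bound from detachability (which holds automatically in a hereditary class); for (b), run the same concentration argument along a subsequence realising $\rho(\hered(\cA^g))^{-1}$. Two small imprecisions in your write-up: the hypothesis of (a) does not make $g$ non-decreasing, so your opening claim that $g$ is non-decreasing ``under each hypothesis'' is false (the paper sidesteps this via the $g^*$-trick embedded in equation~(\ref{eqn.pend-g}), which you should invoke rather than the blanket assertion); and the mechanism for the two-sided wvhp bound is attachability/detachability feeding into Theorem~\ref{thm.app}, not ``bridge-addability'' as you state.

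Part (c) is where your proof genuinely diverges, and both approaches work. The paper's argument is local and stochastic: it first shows that adding a second edge incident to a leaf stays in $\hered(\cA^g)$, then bounds $\pr(\ell(R_n)\geqslant \eta n)$ exponentially, then defines $\cD^j=\{G:\ell(G)=j,\,\kappa(G)\leqslant k_0\}$ and uses a leaf-deletion double count to show $|\cD^j|\leqslant \tfrac{2+\eps}{j}|\cD^{j-1}|$, from which a truncated leaf count stochastically dominated by $\Po(2+\eps)$ gives the factorial-moment bounds after controlling the truncation errors. Your argument is global and enumerative: you observe that attaching a new vertex of degree $d\leqslant 2$ to any $G'\in\hered(\cA^g)_{n-1}$ lands in $\hered(\cA^g)_n$ at Euler-genus cost at most $2$ per induced subgraph containing the new vertex, giving the injection $|\hered(\cA^g)_n|\geqslant \bigl(\tbinom{n}{2}+1\bigr)|\hered(\cA^g)_{n-1}|$, and then you read off $\E[\ell(R_n)]$ directly from the exact symmetry identity and iterate for the factorial moments (the $+2$ slack in the hypothesis is exactly what makes the degree-$2$ attachment legal, as you correctly identify). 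Your route is shorter and self-contained, bypassing both the large-deviations step and the Poisson coupling; the price is that the bijective bookkeeping for $\E[\ell(R_n)_{(t)}]$ (separating singleton-attached leaves from $K_2$-paired ones) needs to be written out carefully, which you only sketch, but the sketch is correct and the dominant $(n-t)^t$ term against $n_{(t)}(n-1)_{(t)}/2^t$ does yield $2^t+o(1)$.
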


\smallskip

We could be even more demanding and ask that each minor of our graphs (rather than each induced subgraph) is appropriately embeddable. It was shown in~\cite{MSsizes} that such minor-closed classes of embeddable graphs always either contain all graphs or have radius of convergence greater than zero. This leads to some drastic changes in typical properties around this threshold. See Section~\ref{minorclosed} where we consider random graphs from such minor-closed embeddable classes.

\subsection{Random unlabelled graphs $\tilde{R}_n$} \label{subsec.unlab}

We consider the probability of being connected for a random unlabelled graph.  Theorem~\ref{thm.unlab} concerns both an embeddable class and a hereditary embeddable class.  It gives a result for $\tilde{R}_n \inu \tA^g$ corresponding to Theorem~\ref{thm.conn} (b) in the labelled case; and for $\tilde{R}_n \inu \hered(\tA^g)$ corresponding to Theorem~\ref{thm.heredconn} (b) in the labelled case. 

\begin{theorem} \label{thm.unlab}
Let 
$g(n)$ be $O(n/\log n)\,$ and be non-decreasing; and either let
$\tilde{R}_n \inu \tilde{\cA}^g$ or let $\tilde{R}_n \inu \hered(\tA^g)$.
Then 
\[ \liminf_{n \to \infty}\, \pr(\tilde{R}_n \mbox{ is connected}) <1.\]
\end{theorem}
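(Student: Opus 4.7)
The plan is to lower-bound $\pr(\tilde{R}_n\text{ is disconnected})$ along a subsequence of $n$ by exhibiting many disconnected graphs obtained from graphs on $n-1$ vertices by adjoining an isolated vertex, and then to control the ratio $|\tilde{\cC}_n|/|\tilde{\cC}_{n-1}|$ --- where $\tilde{\cC}$ stands uniformly for either $\tilde{\cA}^g$ or $\hered(\tilde{\cA}^g)$ --- using the positivity of the unlabelled radius of convergence.

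First, since $g$ is non-decreasing, the operation $\tilde{G}\mapsto\tilde{G}\sqcup K_1$ sends $\tilde{\cA}^g_{n-1}$ into $\tilde{\cA}^g_n$: any embedding of $\tilde{G}$ in a surface of Euler genus $g(n-1)\leqslant g(n)$ extends trivially after placing the extra vertex anywhere. It also sends $\hered(\tilde{\cA}^g)_{n-1}$ into $\hered(\tilde{\cA}^g)_n$, because every induced subgraph of $\tilde{G}\sqcup K_1$ is either an induced subgraph of $\tilde{G}$ or such a subgraph together with one additional isolated vertex, and in either case it lies in $\cA^g$ on the appropriate number of vertices. The map is injective on unlabelled graphs (removing any one isolated vertex recovers $\tilde{G}$), so the number of graphs in $\tilde{\cC}_n$ with an isolated vertex is at least $|\tilde{\cC}_{n-1}|$, giving
\[
\pr(\tilde{R}_n\text{ is disconnected})\;\geqslant\;\pr(\tilde{R}_n\text{ has an isolated vertex})\;\geqslant\;\frac{|\tilde{\cC}_{n-1}|}{|\tilde{\cC}_n|}.
\]

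To finish, I would invoke the unlabelled analogue $\tilde{\rho}(\tilde{\cC})>0$ of the labelled bound $\rho(\cA^g)>0$ from \cite{MSsizes}, valid for $g(n)=O(n/\log n)$ non-decreasing. This yields $K$ with $|\tilde{\cC}_n|\leqslant K^n$ for all large $n$; telescoping with $r_n:=\log(|\tilde{\cC}_n|/|\tilde{\cC}_{n-1}|)$ gives $\sum_{k\leqslant n}r_k\leqslant n\log K+O(1)$, so the Ces\`aro averages of $(r_n)$ are bounded above. In particular $r_n\not\to\infty$, and hence $\liminf_n|\tilde{\cC}_n|/|\tilde{\cC}_{n-1}|\leqslant M$ for some finite $M$. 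Taking a subsequence $(n_j)$ achieving this $\liminf$, the display above gives $\pr(\tilde{R}_{n_j}\text{ is disconnected})\geqslant 1/(2M)$ for all large $j$, so $\liminf_n\pr(\tilde{R}_n\text{ is connected})\leqslant 1-1/(2M)<1$, as required.

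The main obstacle is the input $\tilde{\rho}(\tilde{\cC})>0$ rather than the combinatorial reduction: the labelled bound $\rho(\cA^g)>0$ does not by itself suffice, because $|\cA^g_n|/n!$ is not in general an upper bound on $|\tilde{\cA}^g_n|$. Passing from labelled to unlabelled requires knowing that typical graphs in $\cA^g_n$ have small automorphism groups; this is the substantive work that the present argument would draw from the companion paper.
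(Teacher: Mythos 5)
Your proof is correct and follows essentially the same route as the paper's: both lower-bound $\pr(\tilde{R}_n\mbox{ is disconnected})$ by $|\tilde{\cC}_{n-1}|/|\tilde{\cC}_n|$ via the isolated-vertex map (which needs only that $\tilde{\cC}$ is closed under adding an isolated vertex, ensured by $g$ non-decreasing), and both deduce from $\tilde{\rho}(\tilde{\cC})>0$ that this ratio is bounded along a subsequence. The paper's Lemma~\ref{lem.unlabnew} pushes the same counting slightly further to obtain the stronger lower-density statement of Theorem~\ref{thm.unlabnew}, and the input $\tilde{\rho}(\tA^g)>0$ you flag is indeed supplied directly by Theorem~\ref{theorem:gc}~(b) of~\cite{MSsizes} (which bounds $|\tA^g_n|$ itself, $2^{\Theta(n)}g^g$, rather than deducing it from the labelled count via an automorphism-group argument).
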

\noindent
In Section~\ref{sec.unlab} we prove the above result, and consider also random unlabelled graphs in the minor-closed case, see Theorems~\ref{thm.minorconnectedU} and~\ref{thm.unlabnew}.

\subsection{Plan of the paper}
\label{subsec.plan}
We have just presented our main results. The plan of the rest of the paper is as follows. In Section~\ref{sec.back} we give some background and preliminary results.  In Section~\ref{sec.edgesfaces} we prove Theorems~\ref{thm.edges} and \ref{thm.faces} on numbers of edges and faces, and Theorem~\ref{thm.leaves} on numbers of leaves. From Theorems~\ref{thm.edges} (b) and \ref{thm.faces} (b) we can deduce Corollary~\ref{cor.edgesfaces}, giving an estimate for certain large $g$ on the numbers of edges and faces whp in an embedding. In Section~\ref{section5} we prove Theorem~\ref{thm.leaves} on the number of leaves, and Theorems~\ref{thm.Delta2} and~\ref{thm.facesize} on the maximum degree and maximum face size of an embedding.
In Section~\ref{sec.frag} we prove Theorem~\ref{thm.Fragplanar} on the planarity of the fragment (the subgraph induced on the vertices not in the biggest component).
In Section~\ref{sec.conn} we complete the proofs of the results on random embeddable graphs presented in Subsection~\ref{subsec.embed} by proving Theorem~\ref{thm.conn} on connectedness.

In Section~\ref{sec.hered} we consider random hereditarily embeddable graphs and prove Theorems~\ref{thm.heredfrag}, \ref{thm.heredconn} and~\ref{thm.heredleaves}.  In Section~\ref{minorclosed} we consider minor-closed classes of graphs, and state and prove Theorems~\ref{thm.minorconnected},~\ref{thm.minorleaves},~\ref{thm.minormaxdeg}  and~\ref{thm.minorfacesize}.
In Section~\ref{sec.unlab} we 
discuss random unlabelled embeddable graphs, and in particular prove Theorem~\ref{thm.unlab}.
The final section, Section~\ref{sec.concl}, contains some concluding remarks and questions.

\bigskip


\section{Background and Preliminaries}
\label{sec.back}

In this section, we first introduce the Boltzmann Poisson random planar graph, then we collect some useful background theorems and observations on embeddings of graphs in surfaces, on growth ratios for graph classes, on structured classes of graphs (including embeddable classes), and on the size of an embeddable class $\cA^g$, 
which we will repeatedly make use of in the remainder of this paper.

\subsection{Boltzmann Poisson random planar graph}\label{subsec.BPPRG}

We present the special case for planar graphs of the Boltzmann Poisson random graph, see~\cite{randomGraphsMinorClosed}.
Let $P(x)$ be the exponential generating function for the class $\cP$ of (labelled) planar graphs, let $\rho:= \rho(\cP)$, and note that $P(\rho)$ is finite. Recall that  $\tP$ is the set of unlabelled planar graphs. Let
\[  \mu(H) = \frac{\rho^{v(H)}}{\aut(H)} \; \mbox{ for each } H \in \tP \]
with $\mu(\emptyset)=1$.  Here $v(H)$ is the number of vertices in $H$, and $\aut(H)$ is the size of the automorphism group, and $\emptyset$ is the empty graph, with no vertices.
Routine manipulations (see for example~\cite{randomGraphsMinorClosed}) show that
\[   P(\rho)= \sum_{H \in \tP} \mu(H). \]
The \emph{Boltzmann Poisson random planar graph} $R=BP(\cP,\rho)$ takes values in $\tP$, with
\[  \pr(R=H) = \frac{\mu(H)}{P(\rho)}  \;\; \mbox{ for each } H \in \tP.\]
Recall that $\cC$ is the class of connected graphs in $\cP$, with exponential generating function $C(x)$.
(By convention, $\emptyset$ is not in $\cC$.)  For each $H \in \tC$ let $\kappa(G,H)$ denote the number of components of $G$ isomorphic to $H$. Then (see \cite{randomGraphsMinorClosed}) the random variables $\kappa(R,H)$ for different $H \in \tC$ are independent, with $\kappa(R,H) \sim \Po(\mu(H))$. In particular, 
\[ \pr(R=\emptyset) = e^{-C(\rho)} = 1/P(\rho) = p^* \approx 0.963 \]
(see also the paragraph before Theorem~\ref{thm.conn}).
By Theorem 1.5 of \cite{randomGraphsMinorClosed} and Theorem 1.4 of~\cite{PendAppComp},
for $R_n \in \cP$ the fragment $\Frag(R_n)$ converges in total variation to $R = BP(\cP,\rho(\cP))$ as $n \to \infty$.
Thus $\kappa(\Frag(R_n))$ is distributed asymptotically as a Poisson law of parameter $\lambda= C(\rho)$; and in particular, the probability that $R_n$ is connected tends to $\pr(R=\emptyset)=p^*$.

\subsection{Embeddings of graphs in surfaces} \label{subsec.embeddings}

A central result which we will use repeatedly is Euler's formula. 
\medskip

\noindent
\emph{\bf Euler's formula}\;
Let the (finite) connected graph $G$, with $v$ vertices and $e$ edges, have a cellular embedding in a surface of Euler genus $h$, with $f$ faces. Then
\begin{equation} \label{eqn.Eulersformula}
v-e+f = 2-h \text{ .}
\end{equation}
\noindent
(This formula also holds for cellularly embedded connected pseudographs, that is graphs which may have multiple edges or loops.)

We need also to consider graphs which may not be connected.
Suppose that $G$ has $\kappa \geqslant 1$ components $H_1,\ldots,H_{\kappa}$.  If each component $H_i$ has a cellular embedding $\phi_i$ with $f_i$ faces and Euler genus $h_i$ then we say that $G$ has a cellular embedding $\phi$ with $f= \sum_i(f_i -1) +1 = \sum_i f_i -(\kappa-1)$ faces (we think of the `outer faces' of the $\kappa$ embeddings $\phi_i$ as being merged) and  Euler genus $h=\sum_i h_i$. The embedding $\phi$ is orientable if and only if each $\phi_i$ is orientable.
Extending (\ref{eqn.Eulersformula}), Euler's formula for graphs with $\kappa$ components is
\begin{equation} \label{eqn.Euler2}
v-e+f-\kappa  = 1-h \, .
\end{equation}

Now suppose that $G$ is a not necessarily connected simple graph with $v$ vertices and $e$ edges, which has a cellular embedding in a surface $S$ of Euler genus $h$, with $f$ faces. Then, since $G$ is simple, it must hold that $3f\leqslant 2e$. Thus by Euler's formula~(\ref{eqn.Euler2}) (and using $\kappa \geqslant 1$)
\begin{equation}\label{maxedges}
e \leqslant 3(v+h-2)
\end{equation}
and similarly
\begin{equation}\label{maxfaces}
f \leqslant 2(v+h-2).
\end{equation}
\noindent
Also by~(\ref{eqn.Euler2}), since $f \geqslant 1$ and $\kappa \leqslant v$, we have
\begin{equation}\label{eqn.egeqg}
e \geqslant h
\end{equation}
(see Theorem~\ref{thm.edges} (a)).

Finally here let us recall the Ringel-Youngs Theorem (see equation (7) in \cite{CompleteGraph}, or see for example Theorems 4.4.5 and 4.4.6 in~\cite{GraphsonSurfaces}). This says that the maximum Euler genus of a graph on $n$ vertices, that is the Euler genus of the complete graph $K_n$ on $n$ vertices, is equal to $2\lceil\tfrac1{12} (n-3)(n-4)\rceil \approx \tfrac16 n^2$ in the orientable case, and $\lceil \tfrac16 (n-3)(n-4)\rceil \approx \tfrac16 n^2$ in the non-orientable case, apart from $K_7$ where the non-orientable Euler genus is 3. 

\subsection{Growth ratios for graph classes}
In this section we present some bounds from~\cite{MSsizes} on the growth ratio when adding a vertex or adding two to the genus, 
which we will use repeatedly in the remainder of this paper. Recall that $\cA$ denotes any one of $\cE$ or $\cO\cE$ or $\cN\cE$ or $\cO\cE \cap \cN\cE$.
By~\cc{equation (24) of \cite{MSsizes}}
\begin{equation} \label{eqn.lbasymp}
\mbox{if } g(n) \ll n^2 \mbox{ and } n \mbox{ is sufficiently large, then} \;
\left|\cA_n^{h+2}\right| \geqslant \frac{n^2}{7(n+h)}\, \left|\cA_n^{h}\right| \: \mbox{ for all } h\in \mathbb{N}_0 \mbox{ with } h \leqslant g(n)\,.
\end{equation}
It follows that
\begin{equation} \label{eqn.notin}
 \mbox{ if the genus function } g 
 \mbox{ satisfies } g(n) \ll n^2 , \mbox{ and } R_n \inu \cA^g , \; \mbox{ then whp }\; R_n \not\in \cA^{g(n)-2}_n.   
\end{equation}
%
%
We have been considering changing the Euler genus by 2. Now we consider incrementing $n$ by 1.
By \cc{equation~(18) of \cite{MSsizes}} 
\begin{equation} \label{eqn.growth}
\mbox{for every } h \in \N_0 \mbox{ and } n \in \N \mbox{ we have } \;\;
|\cA^h_{n+1}| \geqslant 2n \, |\cA^h_{n}|.
\end{equation}

\subsection{Structured classes of graphs} \label{subsec.struc}
In this section, we collect some results about structured classes of graphs, including classes of graphs embeddable in surfaces, 
which have a growth constant or at least a positive radius of convergence (of the exponential generating function).
Probably the most important of these results, from which many other results follow, is the Pendant Appearances Theorem.  We need some definitions.

Let $G$ be a (large) graph, let $H$ be a (small) connected graph, and let $W$ be a proper subset of the vertex set $V(G)$.
We say that $G$ has a \emph{pendant appearance} of $H$ on $W$ if the induced subgraph $G[W]$ is isomorphic to $H$, and there is exactly one edge  in $G$ between $W$ and $V(G) \backslash W$ (the \emph{link edge}).  
Let $\pend(G,H)$ be the number of pendant appearances of $H$ in $G$ (that is, the number of sets $W \subseteq V(G)$ such that there is a pendant appearance of $H$ in $G$ on $W$).
We say that the connected graph $H$ can be \emph{attached to} a class $\cA$ of graphs if whenever we have a graph $G \in \cA$ and a disjoint copy of $H$ and we add an edge between a vertex in $G$ and a vertex in $H$ then the resulting graph must be in $\cA$. Similarly, we say that $H$ can be \emph{detached from} $\cA$ if whenever we have a graph $G \in \cA$ with a pendant appearance of $H$ on $W$ then $G \backslash W$ must be in $\cA$. Finally, we let $\aut(H)$ be the number of automorphisms of $H$.

We can now state the 
Pendant Appearances Theorem, as it is given in Corollary 1.2 in~\cite{PendAppComp}, improving on earlier versions of the theorem in~\cite{RandomPlanar,addable}.
\begin{theorem}\label{thm.app}~\cite{PendAppComp}
(\textbf{Pendant Appearances Theorem})
Let the class $\cB$ of graphs have radius of convergence $0< \rho(\cB) < \infty$, let $H$ be a connected graph, let $h=v(H)$ and let $\alpha_H = h\, \rho(\cB)^{h}/ \aut\, H$.
Let $0<\eps<1$.  Then there exists $\nu>0$ depending on $\rho(\cB), H$ and $\eps$ (but not on $\cB$ itself) such that the following hold.
\begin{description}
\item{(a)} If $H$ can be attached to $\cB$ then
\[ \rho(\{G \in \cB : \pend(G,H) \leqslant (1- \eps)\, \alpha_H \, v(G)\,\}) > \rho(\cB) + \nu\,. \]
\item{(b)} If $H$ can be detached from $\cB$ then
\[ \rho(\{G \in \cB : \pend(G,H) \geqslant (1+ \eps)\, \alpha_H \, v(G) \}) > \rho(\cB) +\nu \, .\] 
\end{description} 
\end{theorem}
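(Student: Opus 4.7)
The plan is a double-counting argument that relates graphs in the ``bad'' subclass (those with atypically few or atypically many pendant appearances of $H$) to graphs in $\cB$ itself, via attaching or detaching pendant copies of $H$. I will write $\rho = \rho(\cB)$, $h = v(H)$, and $a = h!/\aut(H)$ for the number of labelled copies of $H$ on a fixed $h$-set, so that $\alpha_H = h a \rho^h$. The intuition is that the attach-a-pendant-$H$ operation multiplies the number of graphs by roughly $nha$ when the vertex count goes up by $h$; to match the target growth rate $\rho^{-h}(n+h)!/n! \sim \rho^{-h} n^h$, the typical pendant count must concentrate at $\alpha_H n$, and any deviation must be paid for by a shrunk subclass.

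For part (a), let $\cB^{-}_\eps$ denote the subclass in the statement and use attachability. Fix an integer $k$ to be optimised. Consider the map sending a pair $(G,\tau)$, with $G \in \cB^{-}_\eps \cap \cB_n$ and $\tau$ encoding a choice of $k$ pairwise disjoint pendant attachments of $H$ using fresh vertex labels in $\{n+1,\dots,n+kh\}$, to the resulting graph $G^+ \in \cB_{n+kh}$. By attachability $G^+ \in \cB_{n+kh}$, and one checks that $\pend(G^+,H) \leq \pend(G,H) + k + O(hk)$, because the only new pendant appearances of $H$ can be the $k$ attached copies themselves plus at most $O(h)$ per link vertex of $G$. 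For $G \in \cB^{-}_\eps$ this gives $\pend(G^+,H) \leq (1-\eps)\alpha_H n + O(k)$. Taking $k = \eta n$ for a small constant $\eta > 0$, the image $G^+$ has a constant-factor deficit in its pendant count relative to $v(G^+) = n + kh$. The number of choices of $\tau$ per $G$ grows like $(nha)^k/k! = (\alpha_H/\rho^h)^k n^k / k!$ up to lower-order factors, while each $G^+$ arises from at most $\pend(G^+,H)^k/k!$ pairs. Comparing the two counts and taking $(n+kh)$-th roots forces $|\cB^{-}_\eps \cap \cB_n|^{1/n} \leq (\rho+\nu)^{-1}$ for some explicit $\nu > 0$.

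Part (b) is dual. Let $\cB^{+}_\eps$ denote the analogous ``too many'' subclass and use detachability. Each $G \in \cB^{+}_\eps \cap \cB_n$ contributes at least $\binom{(1+\eps)\alpha_H n}{k}$ ordered $k$-tuples of pairwise disjoint pendant appearances, each of which detaches (by iterated detachability) to a graph in $\cB_{n-kh}$. Conversely, each graph in $\cB_{n-kh}$ is the detachment image of at most $(nha)^k/k! \cdot (1+o(1))$ such tuples. Rearranging and again taking $k = \eta n$, the $(1+\eps)$-factor excess in the pendant count yields a $(1+\delta)^n$-factor suppression of $|\cB^{+}_\eps \cap \cB_n|$ relative to what is supportable by $|\cB_{n-kh}|$, hence a strict increase in the radius of convergence by some $\nu > 0$.

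The hard part is extracting an estimate in which the improvement $\nu$ depends only on $\rho, H$ and $\eps$, and not on $\cB$. This uniformity comes for free once one notes that every combinatorial multiplier above is expressed in terms of $\alpha_H$, $\rho$, and constants attached to $H$ only. A more delicate issue is passing from the inequalities between $|\cB^{\pm}_\eps \cap \cB_n|$ and $|\cB_{n \pm kh}|$ to strict inequalities between radii of convergence, without assuming any regularity (such as a growth constant) for $|\cB_n|$; this is resolved by working throughout with $\limsup$s, and observing that the multiplicative gap of order $(1\pm\delta)^n$ established above survives the $\limsup$.
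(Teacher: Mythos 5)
The paper does not prove this statement: Theorem~\ref{thm.app} is quoted verbatim as Corollary~1.2 of the cited reference \cite{PendAppComp}, so there is no in-paper proof against which to compare your attempt. Judged on its own, your proposal captures the right \emph{family} of ideas (a double-counting/switching argument built on attaching or detaching pendant copies of $H$), but the specific count in part~(a) is incorrect, and the error is fatal rather than cosmetic. With attachments placed on the fresh label set $\{n+1,\dots,n+kh\}$, the map $(G,\tau)\mapsto G^+$ is \emph{injective}: given $G^+$, the core is forced to be $G=G^+[[n]]$, and the $k$ blocks are forced to be the connected components of $G^+[\{n+1,\dots,n+kh\}]$, so $\tau$ is recovered uniquely. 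Your claimed preimage bound $\pend(G^+,H)^k/k!$ is therefore not achieved; the true preimage count is at most $1$. This matters because your argument is supposed to extract information from the hypothesis $\pend(G,H)\leqslant(1-\eps)\alpha_H n$, and with an injective map the resulting inequality simply compares $|\cB^-_\eps\cap\cB_n|$ to $|\cB_{n+kh}|$ and never sees the pendant count at all. Worse, this comparison goes in the wrong direction: for $h\geqslant 2$ the factor $(n+kh)!/n!$ beats the $\approx(nha)^k$ attachment choices by roughly $n^{k(h-1)}$, so after taking $n$-th roots one gets an upper bound on $\limsup(|\cB^-_\eps\cap\cB_n|/n!)^{1/n}$ that \emph{exceeds} $1/\rho(\cB)$, which is vacuous. (Your count of $\tau$'s, $(nha)^k/k!$, is also off by the large factor $(kh)!/(h!)^k$ that accounts for the choice of partition of the fresh labels into blocks, but fixing this does not rescue the argument.)

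The underlying difficulty is that a pure attach-onto-fresh-labels operation cannot keep pace with the $n!$ factor in the exponential generating function once $h\geqslant 2$; any correct argument has to intersperse the pendant-block labels inside $[N]$ rather than append them, which is precisely what makes the preimage count (and not the forward count) carry the $\pend(\cdot,H)$-dependence. Your sketch of part~(b) is nearer to that structure, since detachability does naturally produce many preimages, but it too glosses over the labelling of the reduced graph (the detachment lands on $[n]\setminus W$, not on $[n-kh]$), the fact that $\binom{(1+\eps)\alpha_H n}{k}$ pendant appearances need not contain that many \emph{pairwise disjoint} ones, and the need to verify that the resulting multiplicative gap survives under $\limsup$'s without assuming a growth constant for $\cB$. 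Finally, the uniformity of $\nu$ in $\cB$ is not ``for free''; it is precisely the content of the refined statement from \cite{PendAppComp} and requires tracking explicit constants that your sketch does not pin down. The take-away: your intuition about where the bound should come from is right, but the concrete bijection you propose does not realise it, and for part~(a) in particular a genuinely different counting scheme is required.
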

In Theorem~\ref{thm.app}, suppose that $\cB$ has a growth constant, and let $R_n \inu \cB$.
Then by part (a) of the theorem, 
\begin{equation} \label{eqn.penda}
  \mbox{ if $H$ can be attached to $\cB$ \, then }\;\;
\pend(R_n, H) > (1- \eps)\, \alpha_H \, n \;\; \wvhp ;
\end{equation}
and by part (b),
\begin{equation} \label{eqn.pendb}
  \mbox{ if $H$ can be detached from $\cB$ \, then }\;\;
\pend(R_n, H) < (1+ \eps)\, \alpha_H \, n \;\; \wvhp\,.
\end{equation}
We will usually apply these results with $H$ as a single vertex, so we are talking about the number $\ell(R_n)$ of leaves in $R_n$. 
\smallskip

It may be natural to assume that the genus function $g$ is non-decreasing, and sometimes we will do so, but at other times this can be avoided.  For example, suppose that $H$ is a connected planar graph, and $g(n) \ll n/\log^3 n$, so $\cA^g$ has growth constant $\gamma_{\cP}$ by Theorem~\ref{theorem:gc}~(a).  If $g$ is non-decreasing, then $H$ can be attached to $\cA^g$, so we can apply equation~(\ref{eqn.penda}).
But we can avoid assuming that $g$ is non-decreasing, as follows.
Define the new genus function $g^*$ by setting $g^*(n) = \max \{ g(n') : 1 \leqslant n' \leqslant n\}$.  Then $g \leqslant g^*$ so $\cA^g \subseteq \cA^{g^*}$, $g^*$ is non-decreasing, and $g^*(n) \ll n/\log^3n$.
Thus 
$\cA^{g^*}$ has growth constant $\gamma_{\cP}$ as for $\cA^g$, so $|\cA^{g^*}_n| = (1+o(1))^n |\cA^g_n|$. Let $0<\eps< 1$.  Since $H$ can be attached to $\cA^{g^*}$,
by equation~(\ref{eqn.penda})  
there exists $\nu>0$ such that
\[ | \{ G \in \cA^{g^*}_n : \pend(G,H) < (1-\eps) \alpha_H n \}| \leqslant (1+o(1))\,e^{-\nu n} |\cA^{g^*}_n| = (1+o(1))^n e^{-\nu n} |\cA^g_n|\,.\]
Since $\cA^g \subseteq \cA^{g^*}$ this gives
\[ | \{ G \in \cA^{g}_n : \pend(G,H) < (1-\eps) \alpha_H n \}| \leqslant  (1+o(1))^n e^{-\nu n} |\cA^g_n|\,,\]
so that
\begin{equation} \label{eqn.pend-g}
\mbox{ for } R_n \inu \cA^g\,, \;\;\; \pend(R_n,H) \geqslant (1-\eps) \alpha_H n \;\; \wvhp
\end{equation}
(without assuming that $g$ is non-decreasing).
\smallskip


%
%


A set $\cA$ of graphs is called \emph{bridge-addable} when for each graph $G$ in $\cA$, if $u$ and $v$ are vertices in distinct components of $G$ then the graph $G+uv$ obtained from $G$ by adding an edge between $u$ and $v$ is also in $\cA$. The concept of being bridge-addable was introduced in~\cite{RandomPlanar} (though initially called weakly addable). If a set $\cA$ of graphs is bridge-addable then we can give upper bounds on 
the number of components and the size of the fragment of a graph $R_n \inu \cA$ chosen uniformly at random from $\cA_n$. Recall that $\frag(G)$ is the number of vertices in the fragment of the graph $G$, and $\Po(\lambda)$ denotes the Poisson distribution with mean $\lambda$.
\begin{lemma}\label{lem.frag} \cite[Theorem 2.2]{addable} \cite[Equation (7)]{C-wba}
If $\cA$ is a bridge-addable set of graphs and $R_n \inu \cA$, then (a) $\kappa(R_n)$ is stochastically at most $1+\Po(1)$, so $\pr(R_n \mbox{ is connected}) \geqslant 1/e$ and $\, \E[\kappa(R_n)] <2$ ; and (b) $\, \E[\frag(R_n)] <2$.
\end{lemma}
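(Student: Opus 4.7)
For part (a), my plan is a classical switching (double-counting) argument. Let $p_k := \pr(\kappa(R_n) = k)$. For each $k \geqslant 2$, count pairs $(G, e)$ where $G \in \cA_n$ has exactly $k$ components and $e$ is a non-edge of $G$ joining two of them. A graph with component sizes $n_1, \ldots, n_k$ contributes $\sum_{i<j} n_i n_j \geqslant (k-1)(n-k/2)$ such pairs (using $n_i \geqslant 1$ and the extremal bound $\sum_i n_i^2 \leqslant (n-k+1)^2 + (k-1)$). Conversely, $G+e$ lies by bridge-addability in the sub-class of $\cA_n$ with $k-1$ components, and each such graph has at most $n-k+1$ bridges (they lie in a spanning forest). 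Comparing yields $p_k/p_{k-1} \leqslant 1/(k-1)$, which exactly matches the successive PMF ratio of $\Po(1)$. Thus the likelihood ratio $\pr(\kappa(R_n)-1=j)/\pr(Y=j)$ is non-increasing in $j$ for $Y \sim \Po(1)$, giving monotone-likelihood-ratio ordering; this implies the usual stochastic ordering $\kappa(R_n) \leqslant_{\mathrm{st}} 1+\Po(1)$, from which both $\pr(R_n \text{ connected}) \geqslant 1/e$ and $\E[\kappa(R_n)] \leqslant 2$ follow. The strict inequality $<2$ uses the strictness of the ratio bound for $k \geqslant 3$.

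For part (b), my plan is an analogous switching that tracks the fragment directly. Count triples $(G, v, u)$ with $G \in \cA_n$, $v \in \Frag(G)$, and $u$ in the largest component $L(G)$; the total equals $\sum_G \frag(G)\,(n-\frag(G))$. For each, bridge-addability gives $G' := G+vu \in \cA_n$, and in $G'$ the edge $vu$ is a bridge sitting inside the new largest component $L(G') = L(G) \cup (v\text{'s old component})$, with $v$ on the weakly smaller side of the split $G'-vu$. The map $(G,v,u) \mapsto (G',v,u)$ is an injection (since $G = G'-vu$) into the set of such triples for $G'$. Since the connected graph $L(G')$ contains at most $|L(G')|-1$ bridges and each contributes at most two ordered pairs, this yields
\[
\E[\frag(R_n)\,(n-\frag(R_n))] \;\leqslant\; 2\bigl(\E[n-\frag(R_n)] - 1\bigr).
\]
The finish uses the elementary integer inequality $f(n-f) \geqslant n-1$ for $f \in \{1, \ldots, n-1\}$ (minimised at the endpoints), giving $(n-1)\,\pr(\frag(R_n) \geqslant 1) \leqslant 2(n-1-\E[\frag(R_n)])$; combining with Markov's bound $\pr(\frag(R_n) \geqslant 1) \leqslant \E[\frag(R_n)]$ and solving delivers $\E[\frag(R_n)] \leqslant 2(n-1)/(n+1) < 2$.

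The main obstacle is the bookkeeping in the switching of part (b): confirming that the factor of $2$ from the orientation ambiguity of bridge endpoints is not, say, a factor of $3$, since the final algebra delivers the threshold $<2$ only when this constant is exactly $2$. I also need to fix a consistent tie-breaking convention for the largest component (when several components share the maximum size) so that the forward and inverse directions of the switching agree. Part (a) is comparatively routine modulo the standard fact that monotone-likelihood-ratio ordering implies usual stochastic ordering.
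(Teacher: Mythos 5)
Your part (a) is correct. The double count gives $p_k/p_{k-1} \leqslant \frac{n-k+1}{(k-1)(n-k/2)} \leqslant \frac1{k-1}$ for $k \geqslant 2$, which matches the successive PMF ratios of $1+\Po(1)$ and yields the stochastic dominance; strictness in $\E[\kappa(R_n)] < 2$ follows because $\kappa(R_n)\leqslant n$, so its law cannot coincide with that of $1+\Po(1)$. (Note the paper does not prove this lemma but cites it from the references, so there is no internal proof to compare with.)

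Part (b), however, has a genuine gap at the final algebraic step, and in fact the single switching you set up cannot by itself deliver $\E[\frag(R_n)]<2$. You correctly derive $\E[\frag(n-\frag)] \leqslant 2\bigl(n-1-\E[\frag]\bigr)$; your worry about the factor $2$ versus $3$ is moot (the constraint ``$v$ on the weakly smaller side'' actually fixes the orientation and gives factor $1$), but that is not where the argument breaks. The pointwise bound $f(n-f)\geqslant n-1$ gives $(n-1)\pr(\frag\geqslant 1)\leqslant 2\bigl(n-1-\E[\frag]\bigr)$, i.e.\ $\E[\frag]\leqslant (n-1)\bigl(1-\tfrac12\pr(\frag\geqslant 1)\bigr)$. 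To squeeze $\E[\frag]$ below $2$ from this you would need a \emph{lower} bound on $\pr(\frag\geqslant 1)$ close to $1$, but Markov gives $\pr(\frag\geqslant 1)\leqslant\E[\frag]$, an \emph{upper} bound, which points the wrong way and cannot be substituted usefully. Your claimed conclusion $\E[\frag]\leqslant 2(n-1)/(n+1)$ would only follow from $\pr(\frag\geqslant 1)\geqslant\E[\frag]$, which is false once $\frag$ can exceed $1$; even the correct reverse inequality $\pr(\frag\geqslant 1)\geqslant\E[\frag]/(n-1)$ gives only $\E[\frag]\leqslant\tfrac23(n-1)$. More fundamentally, your two inequalities are jointly satisfied by distributions with $\E[\frag]=\Theta(n)$ (mass near $f=0$ and near $f=n-1$), so no further algebra on them can produce a constant bound. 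The cited source (equation (7) of the C-wba paper) obtains $\E[\frag]<2$ by a finer argument that controls the distribution of $\frag$, rather than a single switching pass combined with the crude pointwise bound $f(n-f)\geqslant n-1$.
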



\subsection{Size of an embeddable class $\cA^g$ of graphs} \label{subsec.size}

The following theorem is one of the main results of the companion paper~\cite{MSsizes}. It is stated here for convenience, since we will make repeated use of it.
\begin{theorem}\cc{\cite[Theorem 1]{MSsizes}}\label{theorem:gc}
Let $g=g(n)$ be a genus function; and let $\cA^g$ denote one of the four classes $\cE^g$, $\cO\cE^g$, $\cN\cE^g$ or $\cO\cE^g \cap \cN\cE^g$.

(a) If $g(n)$ is $o(n / \log^3n)$ then $\cA^{g}$ has growth constant $\gamma_{\mathcal{P}}$; that is,
\begin{equation}
\left|\cA_n^{g}\right| = (1+o(1))^n \, \gamma_{\cP}^n \; n! \, .\notag
\end{equation}

(b)
If $g(n)$ is $O(n)$ then
\begin{equation}
\left|\cA_n^{g}\right| = 2^{\Theta(n)} \, g^{g} \; n! \;\;\; \mbox{ and } \;\;\; |\tA_n^{g}| = 2^{\Theta(n)} \, g^{g} \, .\notag
\end{equation}
\end{theorem}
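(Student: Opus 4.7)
For part (a), the lower bound $|\cA^g_n| \geqslant |\cP_n| = (1+o(1))^n \gamma_{\cP}^n n!$ is immediate from the inclusion $\cP \subseteq \cA^g$ together with the Giménez--Noy asymptotic estimate of $|\cP_n|$. For the upper bound my plan is a \emph{planarising encoding}. Given $G \in \cA^g_n$, fix a cellular embedding of each component of $G$ in a surface of Euler genus at most $g(n)$, and fix a spanning tree $T$ per component. The fundamental cycle of each non-tree edge defines an element of the first $\Z_2$-homology of the surface, which has dimension at most $h \leqslant g$. Choosing a basis of non-tree edges for the image homology, removing these at most $g$ edges kills all non-trivial homology, and a standard retraction argument then shows the remaining graph $G_0$ embeds in a planar subsurface and hence is planar. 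Canonicalising (e.g.\ taking the lexicographically smallest set of at most $g$ edges whose removal planarises $G$) yields an injection $G \mapsto (G_0, D)$ from $\cA^g_n$ into $\cP_n \times \{D \subseteq \binom{[n]}{2} : |D| \leqslant g\}$. The number of such $D$ is at most $n^{O(g)}$, and under the hypothesis $g(n) = o(n/\log^3 n)$ we have $n^{O(g)} = \exp(O(g \log n)) = \exp(o(n/\log^2 n)) = (1+o(1))^n$, so $|\cA^g_n| \leqslant |\cP_n| \cdot (1+o(1))^n$, as required.

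For part (b) I would prove matching upper and lower bounds up to factors of $2^{\Theta(n)}$. For the upper bound the edge bound $e(G) \leqslant 3(n+g-2)$ from~(\ref{maxedges}) combined with map enumeration suffices: every $G \in \cA^g_n$ admits a rotation system (together with an edge-signature in the non-orientable case) realising a cellular embedding of each component with total Euler genus at most $g$, and classical results in the spirit of Bender--Canfield--Richmond bound the number of such labelled simple maps on $n$ vertices by $2^{O(n)} g^g n!$. For the lower bound my plan is to take a typical planar graph $G_0 \in \cP_n$---which by the Pendant Appearances Theorem and equation~(\ref{eqn.penda}) has linearly many disjoint pendant appearances of any fixed connected planar $H$---and glue $\Theta(g)$ genus-increasing gadgets (copies of $K_5$ or $K_{3,3}$, each contributing constant Euler genus) onto distinct pendant locations, using many different link-edge choices. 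The number of such attachments, times the labellings, gives $\Omega(n^g) = \Omega(2^{\Theta(n)} g^g)$ distinct graphs per base graph when $g = \Theta(n)$, because $n^g/g^g = (n/g)^g = 2^{\Theta(n)}$. The unlabelled bound $|\tA^g_n| = 2^{\Theta(n)} g^g$ follows by dividing by $n!$ and noting that typical graphs in this range have only sub-exponentially many automorphisms.

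The main obstacle is the map-enumeration step in the upper bound of part (b): pinning down the factor $g^g$ (rather than the trivial $(n+g)^{n+g}$) requires a careful asymptotic count of labelled simple (possibly non-orientable) cellular maps of Euler genus at most $g$ for $g$ growing linearly in $n$, and in particular needs a handle on simplicity and on the ``at most $g$'' (rather than ``exactly $g$'') constraint. The planarising encoding in part (a) is also delicate to set up---canonicalising the removed edge set, handling disconnected graphs whose components share out the total genus, and dealing uniformly with orientable and non-orientable surfaces (where crosscaps behave differently from handles)---but once the encoding is fixed the counting loss of $n^{O(g)}$ is easily absorbed into the sub-exponential factor under the hypothesis $g(n)=o(n/\log^3 n)$.
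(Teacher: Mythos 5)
A preliminary remark on framing: this paper does not prove Theorem~\ref{theorem:gc} at all. It is quoted verbatim from~\cite[Theorem~1]{MSsizes} and stated here only for repeated reference (``It is stated here for convenience, since we will make repeated use of it''), so there is no ``paper's own proof'' in this document to compare against. I therefore evaluate your proposal on its own terms, and I find two substantive gaps, one in each part.

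In part (a) the planarising step is wrong as stated. You claim that removing the at most $g$ non-tree edges whose fundamental cycles form a basis of the image of $H_1(G;\Z_2)$ in $H_1(S;\Z_2)$ ``kills all non-trivial homology'' of the remainder $G_0$. But the fundamental cycle of a non-tree edge $e'$ uses only $e'$ together with tree edges, and in particular no other non-tree edge; so deleting the basis edges $e_1,\dots,e_k$ leaves the homology classes of every remaining fundamental cycle completely unchanged, and they still generate a nontrivial subspace of $H_1(S;\Z_2)$ in general. A concrete counterexample: $K_7$ triangulates the torus (Euler genus $h=2$), but removing any two edges from $K_7$ leaves $19 > 3\cdot 7 - 6 = 15$ edges on $7$ vertices, so the remainder is never planar; the skewness of $K_7$ is in fact $6$, three times its Euler genus. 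So your encoding $G\mapsto(G_0,D)$ requires a genuine planarising edge set $D$, and it is a nontrivial, unproved claim that every graph in $\cE^g_n$ admits one of size $O(g)$, or even of any size uniformly $o(n/\log n)$. Without such a bound the entire upper-bound count $n^{O(g)}$ evaporates, and it is also suspicious that your argument would apparently need only $g = o(n/\log n)$ rather than the paper's hypothesis $g = o(n/\log^3 n)$, which suggests the encoding loss has been underestimated.

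In part (b) the lower-bound construction is off by a factor of roughly $g^g$, which is exactly the interesting factor. Gluing $\Theta(g)$ copies of a fixed gadget (say $K_5$) onto leaves of a planar base on $n-5g$ vertices produces at most
\[
\binom{n}{5g}\,\frac{(5g)!}{(5!)^g\,g!}\,\big|\cP_{n-5g}\big|\cdot n^{g}
\ \leqslant\ n!\cdot 2^{O(n)}\cdot\frac{n^{g}}{g!}
\ \leqslant\ n!\cdot 2^{O(n)}\cdot\Big(\frac{en}{g}\Big)^{g}
\]
labelled graphs; when $g=\Theta(n)$ the last factor is merely $2^{O(n)}$, so the family has size $2^{O(n)}\,n!$, far short of $2^{\Theta(n)}\,g^{g}\,n!=2^{\Theta(n\log n)}\,n!$. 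The slip is in asserting that $g$ attachment choices among $\Theta(n)$ leaves give $\Omega(n^{g})$ constructions: the true count is $\binom{\Theta(n)}{g}=2^{O(n)}(n/g)^{g}$, and the missing $g!\approx g^{g}$ denominator is precisely the factor you need. A construction with only $O(1)$ free bits per genus unit can never supply the required $\Theta(g\log g)$ bits of entropy, so a qualitatively different family is needed (as is used in~\cite{MSsizes}). The upper bound via ``map enumeration in the spirit of Bender--Canfield--Richmond'' is likewise only a heuristic as written: establishing that the number of labelled simple cellular maps with $n$ vertices, $O(n+g)$ edges, and Euler genus at most $g$ (covering both orientabilities, the ``at most'' constraint, and disconnected graphs) is $2^{O(n)}g^{g}n!$ rather than, say, $(n+g)^{O(n+g)}n!$ is exactly the nontrivial content of part (b), and is not a direct consequence of the classical asymptotics you cite.
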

\noindent
Observe that by part (b)
\begin{equation} \label{eqn.rhopos}
\rho(\cA^g) \geqslant \tilde{\rho}(\tA^g) >0 \;\;\; \mbox{ if }\; g(n)=O(n/\log n).
\end{equation}


\section{Numbers of edges and faces of $R_n\inu\cA^g$} \label{sec.edgesfaces}
In this section we prove Theorem~\ref{thm.edges} on the number $e(R_n)$ of edges of $R_n$, and Theorem~\ref{thm.faces} concerning the numbers of faces in embeddings of $R_n$. We first prove Theorem~\ref{thm.faces}~(a). Then in the next subsection we prove Theorem~\ref{thm.edges}, together with a theorem (Theorem~\ref{thm.expectededges}, not presented earlier) about the expected number of edges in $R_n$. We make use of the result of Theorem~\ref{thm.faces}~(a) in the proof of Theorem~\ref{thm.edges}~(a). Finally, we prove Theorem~\ref{thm.faces}~(b) by making use of the result of Theorem~\ref{thm.edges}~(b). 

\subsection{Numbers of faces: proof of Theorem \ref{thm.faces} (a)}
\label{subsec.facesproof(a)}
In this section, we use a double counting argument to prove  Theorem \ref{thm.faces}~(a), which shows that whp every relevant embedding of a random graph $R_n \in_u \cA^g$ has many faces.
\begin{proof}[Proof of Theorem~\ref{thm.faces} (a)]
Let $n \in \N$.  For each $f \in \N$ let
\[\cB_n(f) = \left\{G\in \cA^g_n \mid G \text{ has a relevant embedding with at most } f \text{ faces}\right\}  .\]
Let $c_0=1/14$, let $f_0= c_0n^2/(n+g)$, let $f_1 \leqslant f_0$, and let $G\in \cB_n(f_1)$ have a relevant embedding with $f \leqslant f_1$ faces. We allocate each vertex of $G$ to one of its incident faces. By adding an edge between two vertices that have been allocated to the same face (as long as the edge is not already present in $G$), we create a graph $G' \in \cB_n(f_1+1)$ embedded in the same surface with $f+1$ faces. Note that we avoid counting any potential edges twice by assigning each vertex to a unique face. Let $n_i$ be the number of vertices we have allocated to the $i$th face. Then the number of possible edges between vertices that have been allocated to the same face is
\begin{equation}
    \begin{split}
        \sum_{i=1}^{f} \binom{n_i}{2}
        &=\left(\sum_{i=1}^{f}\frac{n_i^2}{2}\right) - \left(\sum_{i=1}^{f} \frac{n_i}{2}\right)\notag\\
        &\geqslant \left(\frac{1}{2}\cdot \sum_{i=1}^{f}\left(\frac{n}{f}\right)^2\right) -\frac{n}{2}\notag\\
        &=\frac{n^2}{2f} - \frac{n}{2} \; \geqslant \; \frac{n+g}{2c_0} - \frac{n}2.
    \end{split}
\end{equation}
Further, there are at most $3(n+g)$ edges already present in the graph $G$ by equation (\ref{maxedges}). Hence, from $G$ we construct at least
\[\frac{n+g}{2c_0} - \frac{n}{2} - 3(n+g) = 4(n+g) - \frac{n}{2} \geqslant \tfrac72 (n+g)\]
graphs $G' \in \cB_n(f_1+1)$.
Since there are at most $3(n+g)$ edges in $G'$, each graph $G'$ is constructed at most $3(n+g)$ times. Thus
\begin{equation}
\left|\cB_n(f_1)\right| 
\leqslant \tfrac67 \, \left|\cB_n(f_1+1)\right| \text{ .}
\end{equation}
Then for each $i \in \N$, $|\cB_n(f_0-i)| \leqslant (\frac67)^i \, |\cB_n(f_0)|$.  Let $f_2 = c\, n^2/(n+g)$ where $c = 1/15$.  Then $f_0-f_2 \to \infty$ as $n \to \infty$, so
\[ |\cB_n(f_2)| \leqslant (\tfrac67)^{f_0-f_2-1} |\cB_n(f_0)| \ll |\cB_n(f_0)| \leqslant |\cA^g_n|.\]
Hence, whp every relevant embedding of $R_n$ has more than $f_2$ faces, which completes the proof.
\end{proof}

\subsection{Numbers of edges: proof of Theorem~\ref{thm.edges}}
\label{subsec.edgesproof}

In this subsection we prove Theorem~\ref{thm.edges} on the number $e(R_n)$ of edges. To prove part (a) of Theorem~\ref{thm.edges} we make use of Theorem~\ref{thm.faces} (a) which was proven in Section~\ref{subsec.facesproof(a)}.
We also state and prove Theorem~\ref{thm.expectededges}, which gives another whp lower bound on $e(R_n)$.
\smallskip

\begin{proof}[Proof of Theorem~\ref{thm.edges} (a)] 
By equation~(\ref{eqn.notin}), whp $R_n \not\in \cA^{g(n)-2}$, and so $R_n$ has a cellular embedding $\phi$ in a surface of Euler genus $\geqslant g(n)-1$.
By Euler's formula, equation~(\ref{eqn.Euler2}), it follows that whp
\[ n-e(R_n) +f -\kappa(R_n) \leqslant 1-(g-1) = 2-g,\]
where $\kappa(R_n)$ is the number of components of $R_n$ and $f$ is the number of faces in the embedding $\phi$;
and so
\[ e(R_n) \geqslant n+g + f - \kappa(R_n) -2. \]
By part (a) of Theorem~\ref{thm.faces}, whp $f \geqslant \tfrac1{15} \frac{n^2}{n+g(n)}$, and this bound tends to infinity as $n \to \infty$.
But since $\cA^g_n$ is bridge-addable, by Lemma~\ref{lem.frag} we have $\mathbb{E}[\kappa(R_n)]<2$. Thus by Markov's inequality, whp $f-\kappa(R_n) -2 > 0$. Hence
$e(R_n) > n+g(n)$ whp, as required.  
\end{proof}

There are results that give bounds on the genus of most graphs with a given number of edges \cite{GenusRandom1, GenusRandom3, GenusRandom2}. Some of these results are stated for the binomial 
random graph $G(n,p)$ with given edge probability $p=p(n)$, but they apply to the case of a given number $m=m(n)$ of edges, as pointed out in \cite{GenusRandom1, GenusRandom3}. We make use of the upper bounds in the proof of Theorem~\ref{thm.edges} (b).
\begin{proof}[Proof of Theorem~\ref{thm.edges} (b)] 
Let $j\in \mathbb{N}$ and let the genus function $g$ satisfy $n^{1+1/(j+1)} \ll g(n) \ll n^{1+1/j}$. Further, let $\eps >0$ and $m = m(n)=\lfloor(1-\tfrac12 \eps)\, \tfrac{j+2}{j}\, g(n) \rfloor$. It follows from \cite{GenusRandom2} (see (1.2) for the $G(n,p)$ version) that almost every graph on $n$ vertices with $m$ edges can be embedded in an orientable surface of Euler genus at most
\[ (1+ \tfrac12 \eps)\, \tfrac12 \, \tfrac{j}{j+2} \left(m/\tbinom{n}{2}\right)\, n^2 \leqslant (1- \tfrac14 \eps^2)\, \tfrac{n}{n-1}\, g(n) \leqslant g(n) -1\]
for $n$ sufficiently large.
From Observation \cc{10 in \cite{MSsizes}} it then follows that almost every graph on $n$ vertices with $m$ edges can be embedded in a non-orientable surface of Euler genus at most $g(n)$. So, for $n$ sufficiently large, at least half of the graphs on $n$ vertices with $m$ edges lie in the class $\cA_n^g$.

Now consider further the case $j=1$.  We want to show that the last statement still holds when we extend the range of values $g(n)$ up to $\tfrac1{12} n^2$.  By the above argument, we may suppose that $g(n)$ is large, say $g(n) \geqslant n^2/\log n\,$; and $g(n) \leqslant \tfrac1{12} n^2$.
Then $p=m/\binom{n}{2}$ satisfies $p^2(1-p^2) = \Omega((\log n)^{-2}) \gg 8(\log n)^4/n$.
Hence, it follows from \cite[Theorem 4.5]{GenusRandom1} that almost every graph on $n$ vertices with $m$ edges embeds in an orientable and a non-orientable surface of Euler genus at most $(1+\tfrac12 \eps)\, \tfrac13 m \leqslant g(n)$. So, for $n$ sufficiently large, at least half of the graphs on $n$ vertices and with $m$ edges lie in the class $\cA_n^g$, as desired.

Let $m^- = m^-(n) = \lfloor(1- \eps)\, \tfrac{j+2}{j}\, g(n) \rfloor$.  Then the number of graphs on $[n]$ with at most $m^-$ edges is
\[ \sum_{k=0}^{m^-}\binom{\binom{n}{2}}{k} \leqslant
(m^- +1)\, \binom{\binom{n}{2}}{m^-}
 \ll \tfrac{1}{2}\binom{\binom{n}{2}}{m} \leqslant |\cA^g_n|
\]
for $n$ sufficiently large.
Hence whp $e(R_n) > m^-$, and the required result follows.
\end{proof}
\smallskip

\noindent\emph{Another lower bound on numbers of edges}

Another way of bounding the number $e(R_n)$ of edges is by considering the number of edges an edge-maximal graph can have.
One of the earliest questions investigated concerning the random planar graph $R_n \inu \cP$ was about $e(R_n)$; and Theorem~1 of Denise et al~\cite{TherandomPlanar} 
in 1996 says that
$ \E[e(R_n)] \, \geqslant \, \tfrac12 (3n-6)$.  The proof was based on the fact that an $n$-vertex plane triangulation has $3n-6$ edges.
(See also~\cite{EdgesRand,RandTriang,PlanGraphsViaWell}.)
In fact a stronger and more general result holds.
Let $\Bin(k,p)$ denote the binomial distribution, for the number of successes in $k$ independent trials each with probability $p$ of success.
\begin{theorem}\label{thm.expectededges}
Let $g$ be any genus function, let $\cA^g$ be $\cE^g$ or $\cO\cE^g$ or $\cN\cE^g$, and let $R_n\in \cA^g_n$. Then for each $n \in \N$ we have $e(R_n) \geqslant_s \Bin(3n-6,\frac12)$; that is, $e(R_n)$ is stochastically at least $\,\Bin(3n-6,\frac12)$.
\end{theorem}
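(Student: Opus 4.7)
The plan has three main ingredients. First, I would establish the combinatorial fact that every edge-maximal graph $T$ in $\cA^g_n$ satisfies $e(T)\geqslant 3n-6$ for $n\geqslant 3$. Since $\cP\subseteq\cA^g$ and $\cA^g$ is closed under edge deletion, any $G\in\cA^g_n$ with $e(G)<3n-6$ has some embedding with a face of size $\geqslant 4$ (by Euler's formula~(\ref{eqn.Euler2})); one may then add a chord between two non-adjacent vertices on that face to obtain a simple $\cA^g$-supergraph of $G$, contradicting edge-maximality.

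Second, let $\mathcal{M}_n$ denote the edge-maximal graphs in $\cA^g_n$ and define an auxiliary distribution $\nu$ on $\cA^g_n$ by sampling $T\in\mathcal{M}_n$ uniformly and then retaining each edge of $T$ independently with probability $\tfrac12$. Since $\cA^g$ is closed under edge deletion, the resulting graph lies in $\cA^g_n$, and conditional on $T$ its edge count is $\Bin(e(T),\tfrac12)$, which stochastically dominates $\Bin(3n-6,\tfrac12)$ by the first step. Hence unconditionally $e(G)\geqslant_s\Bin(3n-6,\tfrac12)$ under $\nu$. Equivalently, if we sample uniformly from all pairs $(T,S)$ with $T\in\mathcal{M}_n$ and $S\subseteq E(T)$, the marginal distribution on $G=([n],S)$ weights each $G\in\cA^g_n$ by $m(G)=|\{T\in\mathcal{M}_n:G\subseteq T\}|$.

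The main step is to compare the uniform measure $\pi$ on $\cA^g_n$ to $\nu$. The density ratio $d\pi/d\nu(G)$ is proportional to $1/m(G)$, which is an \emph{increasing} function of $G$ in the subgraph order (since $m$ is decreasing). A standard reweighting identity gives
\[ \pi(e(R_n)\geqslant k)-\nu(e(R_n)\geqslant k) \;=\; \frac{\mathrm{Cov}_\nu(\ind_{e\geqslant k},\,d\pi/d\nu)}{\E_\nu[d\pi/d\nu]}, \]
so it suffices to show $\mathrm{Cov}_\nu(\ind_{e\geqslant k},1/m)\geqslant 0$. Conditional on $T$, the measure $\nu(\cdot\mid T)$ is the uniform product measure on $2^{E(T)}$; both $\ind_{e\geqslant k}$ and $1/m$ are increasing in the included edges, so FKG yields a non-negative inner covariance. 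Combining with the tower law, one is left with the outer term $\mathrm{Cov}_T(\pr(\Bin(e(T),\tfrac12)\geqslant k),\E[1/m\mid T])$, and showing this is non-negative completes the proof.

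The main obstacle I anticipate is the outer covariance step, because distinct edge-maximal graphs in $\cA^g_n$ are not naturally nested in the subgraph order, so one cannot invoke a single monotone ordering. Both conditional expectations are monotone in $e(T)$ (the first by direct computation of the Binomial tail; the second via a coupling that enlarges $S$ whenever $T$ is enlarged), and I would handle the non-nested case by chaining along intermediate triangulations: each $T\in\mathcal{M}_n$ contains a planar triangulation $T_0$ on $[n]$ obtained by restricting to a planar-triangulating subset of $E(T)$, which gives a common reference to which the covariance computation can be reduced. Once the outer covariance is shown non-negative, the stochastic dominance $\pi(e(R_n)\geqslant k)\geqslant\pr(\Bin(3n-6,\tfrac12)\geqslant k)$ follows immediately for every $k$.
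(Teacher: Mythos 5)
Your proposal has two genuine gaps. First, your argument that every edge-maximal graph in $\cA^g_n$ has at least $3n-6$ edges is incomplete: a face of length $\geqslant 4$ does not guarantee the existence of two distinct non-adjacent vertices on its boundary to which a chord can be added. Facial walks may revisit vertices, and distinct vertices on the boundary walk may already be adjacent via edges elsewhere in the graph, so adding a chord inside the face would create a multi-edge. The bound is correct, but establishing it is a non-trivial theorem: the paper invokes a dedicated result of Davies and Thomassen (\cite{DaviesThomassen2020}), which improved on earlier work of McDiarmid and Wood (\cite{edgemaxgraphsonsurfaces}). You should not expect a short face-based argument to reprove it.

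Second, your comparison of the uniform measure $\pi$ to the auxiliary measure $\nu$ does not close. You correctly handle the inner covariance via FKG, but you explicitly leave the outer covariance $\mathrm{Cov}_T$ open, and the proposed remedy of ``chaining along intermediate triangulations'' is vague: it is not clear that $\E[1/m\mid T]$ is monotone in $e(T)$, and a mixture of FKG measures over non-nested ground sets need not itself be FKG, so the outer term can genuinely go the wrong way. The paper's route avoids this difficulty entirely. It proves a general lemma: for any non-empty downward-closed family $\cE$ of subsets of a finite set with all maximal members of size $\geqslant r$, a uniformly random member has size $\geqslant_s \Bin(r,\tfrac12)$. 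The key trick is to assign each $A\in\cE$ to the \emph{first} maximal set $M_i$ containing it; this partitions $\cE$ into pieces $\cE_i$ that are upward-closed within $2^{M_i}$, Harris's inequality gives the dominance within each piece, and stochastic dominance is preserved under mixing. Conditioning on the \emph{unique} assigned witness $M_i$, rather than on a uniformly random witness $T$ as in your $\nu$, eliminates the cross-witness covariance term that your approach is stuck on.
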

\noindent
Theorem~\ref{thm.expectededges} implies in particular that $\E[e(R_n)] \geqslant \frac{1}{2} (3n-6)$.
\begin{proof}
By~\cite{DaviesThomassen2020}, for every surface $S$ and every $n \in \N$, every edge-maximal graph in $\cE^S_n$ has at least $3n-6$ edges.  (This improves on a result from~\cite{edgemaxgraphsonsurfaces}.)
Thus every edge-maximal graph in $\cA^g_n$ has at least $3n-6$ edges, and so the theorem follows immediately from the next general lemma.
\end{proof}

\begin{lemma} \label{Clem.dvw1}
Let $E$ be a finite set, and let $\cE$ be a non-empty collection of subsets of $E$ which is closed downwards (that is, if $A \in \cE$ and $B \subset A$ then $B \in \cE$).
Suppose that all maximal members of $\cE$ have size at least $r$, for some $r \in \N$. 
Then for $R \inu \cE$ we have
$|R| \geqslant_s \Bin(r,\frac12)$.
\end{lemma}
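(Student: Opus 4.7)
Write $\cE_i = \{R \in \cE : |R| = i\}$, so $|R|$ has distribution proportional to $a_i := |\cE_i|$. The plan is to establish the key recursive inequality
\begin{equation}\label{eqn.Erec}
(r - i)\, a_i \;\leqslant\; (i+1)\, a_{i+1} \quad \text{for } 0 \leqslant i < r,
\end{equation}
and then deduce stochastic domination by $\Bin(r, \tfrac12)$ via a standard likelihood-ratio argument.

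For \eqref{eqn.Erec} I would use a simple double count of pairs $(R, R')$ with $R \in \cE_i$, $R' \in \cE_{i+1}$, and $R \subset R'$. Each $R' \in \cE_{i+1}$ produces exactly $i+1$ such pairs (each subset of $R'$ of size $i$ is automatically in $\cE$ by downward closure), giving the upper bound $(i+1)\, a_{i+1}$. For the lower bound, fix $R \in \cE_i$ with $i < r$; since $\cE$ is downward closed, $R$ is contained in some maximal member $M$, and by hypothesis $|M| \geqslant r$, so for each $e \in M \setminus R$ (and there are at least $r-i$ such $e$) we have $R \cup \{e\} \subseteq M$, hence $R \cup \{e\} \in \cE_{i+1}$. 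This yields at least $(r-i)\, a_i$ pairs, proving \eqref{eqn.Erec}.

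The inequality \eqref{eqn.Erec} rewrites as $a_{i+1}/\binom{r}{i+1} \geqslant a_i/\binom{r}{i}$, so $b_i := a_i/\binom{r}{i}$ is non-decreasing for $0 \leqslant i \leqslant r$. Let $A = \sum_{i \leqslant r} a_i$ and let $\mu_1$ be the distribution on $\{0,\ldots,r\}$ with $\mu_1(i) = a_i/A$, and let $\mu_2 = \Bin(r, \tfrac12)$. The likelihood ratio $\mu_1(i)/\mu_2(i) \propto b_i$ is non-decreasing, which by the standard monotone likelihood ratio argument implies $\mu_1 \geqslant_s \mu_2$. That is, writing $p_k = \Pr(\Bin(r,\tfrac12) \geqslant k)$, we have $\sum_{k \leqslant i \leqslant r} a_i \geqslant A\, p_k$ for every $k \leqslant r$. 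Finally, letting $A' = \sum_{i > r} a_i$ denote the contribution from sets larger than $r$ (which trivially satisfy $|R| \geqslant k$ for any $k \leqslant r$),
\[
\Pr(|R| \geqslant k) \;=\; \frac{\sum_{k \leqslant i \leqslant r} a_i + A'}{A + A'} \;\geqslant\; \frac{A\, p_k + A'}{A + A'} \;\geqslant\; p_k,
\]
the last inequality since $p_k \leqslant 1$. For $k > r$ the bound is vacuous. This gives $|R| \geqslant_s \Bin(r, \tfrac12)$.

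The only conceptual step is the double count giving \eqref{eqn.Erec}; everything else is a routine translation of the resulting log-concavity condition into stochastic domination. There is no real obstacle, just a care needed that the element $e \in M \setminus R$ indeed yields a distinct member of $\cE_{i+1}$, which is immediate.
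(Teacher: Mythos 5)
Your proof is correct, and it takes a genuinely different route from the paper's. The paper first proves an auxiliary lemma (Lemma~\ref{Clem.dvw2}) asserting that for an upward-closed family $\cA$ of subsets of an $m$-set, $|R| \geqslant_s \Bin(m,\tfrac12)$ for $R \inu \cA$, and this is established via Harris's (FKG) correlation inequality. It then deduces Lemma~\ref{Clem.dvw1} by listing the maximal sets $M_1,\ldots,M_k$ of $\cE$, partitioning $\cE$ into blocks $\cE_i = \{A : i \text{ is least with } A \subseteq M_i\}$, observing that each $\cE_i$ is upward-closed inside $M_i$, applying Lemma~\ref{Clem.dvw2} to each block (using $|M_i| \geqslant r$), and averaging. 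Your approach instead establishes the level-count inequality $(r-i)\,a_i \leqslant (i+1)\,a_{i+1}$ directly by a double count of covering pairs (using downward closure for the upper bound and the existence of a large maximal set above any $R \in \cE_i$ for the lower bound), and then converts this into a monotone likelihood ratio against $\Bin(r,\tfrac12)$, with a clean bookkeeping step for the mass at sizes above $r$. Your argument is more elementary -- it avoids Harris's inequality and the partition-by-first-maximal-set device entirely -- and is self-contained; the paper's version is more modular and factors out a reusable statement about upward-closed families. One tiny remark: the fact that every $R \in \cE$ lies inside some maximal member needs only finiteness of $\cE$, not downward closure, but this does not affect correctness.
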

It remains to prove Lemma~\ref{Clem.dvw1}: we use one preliminary lemma.  

\begin{lemma} \label{Clem.dvw2}
Let $M$ be a set of size $m \geqslant 1$, and let $\cA$ be a non-empty collection of subsets of $M$ which is closed upwards (that is, if $A \in \cA$ and $A \subseteq B \subseteq M$ then $B \in \cA$). 
Then for $R \inu \cA$ we have $|R| \geqslant_s \Bin(m,\frac12)$.
\end{lemma}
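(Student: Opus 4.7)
The plan is to argue by induction on $m=|M|$, with the upward-closed hypothesis entering through a single inequality at each step. The base case $m=1$ is immediate: any non-empty upward-closed $\cA \subseteq 2^{\{x\}}$ is either $\{\{x\}\}$ or $\{\emptyset,\{x\}\}$, and in both cases $|R| \geqslant_s \Bin(1,\tfrac12)$ by direct check.

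For the inductive step, I would fix an arbitrary element $x \in M$, write $M' = M\setminus\{x\}$, and split $\cA$ according to whether its members contain $x$:
\[
\cB_{\bar x} \;=\; \{A \in \cA : x \notin A\}, \qquad \cB_x \;=\; \{B \subseteq M' : B \cup \{x\} \in \cA\}.
\]
Both are upward-closed families of subsets of $M'$, and $\cB_x$ is non-empty because $M \in \cA$ (from non-emptiness of $\cA$ together with upward closure). The crucial observation is the inclusion $\cB_{\bar x} \subseteq \cB_x$: if $B \in \cB_{\bar x}$ then $B \in \cA$, so by upward closure $B \cup \{x\} \in \cA$, i.e.\ $B \in \cB_x$. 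This yields the key inequality $p_x := \pr(x \in R) = |\cB_x|/(|\cB_x|+|\cB_{\bar x}|) \geqslant \tfrac12$.

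Conditioning on whether $x \in R$ gives
\[
\pr(|R| \geqslant k) \;=\; p_x\, \pr_{R' \inu \cB_x}\!\bigl(|R'| \geqslant k-1\bigr) \,+\, (1-p_x)\, \pr_{R'' \inu \cB_{\bar x}}\!\bigl(|R''| \geqslant k\bigr),
\]
where the second term is simply dropped if $\cB_{\bar x}$ is empty. Applying the induction hypothesis to $\cB_x$ and (when non-empty) to $\cB_{\bar x}$ bounds the right-hand side below by $p_x\, a + (1-p_x)\, b$, where $a = \pr(\Bin(m-1,\tfrac12) \geqslant k-1)$ and $b = \pr(\Bin(m-1,\tfrac12) \geqslant k)$. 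Since trivially $a \geqslant b$ and $p_x \geqslant \tfrac12$, this combination is at least $\tfrac12 a + \tfrac12 b$, which equals $\pr(\Bin(m,\tfrac12) \geqslant k)$ by the standard one-coin recursion for the binomial tail.

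The only real content is the containment $\cB_{\bar x} \subseteq \cB_x$ and the elementary monotonicity $a \geqslant b$; upward-closure is used in exactly one place, to deliver $p_x \geqslant \tfrac12$. I therefore do not anticipate any substantive obstacle beyond setting up the split carefully.
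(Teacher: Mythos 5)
Your proof is correct, but it takes a genuinely different route from the paper's. The paper proves the stochastic dominance by appealing to Harris's (FKG) inequality: it observes that both $A \mapsto f(|A|)$ (for non-decreasing $f$) and $A \mapsto \ind_{A \in \cA}$ are non-decreasing on the Boolean lattice $2^M$, so they are positively correlated under the uniform measure, and this immediately gives $\E[f(|R|)] \geqslant \E[f(|S|)]$ for $S \inu 2^M$, which is equivalent to $|R| \geqslant_s \Bin(m,\tfrac12)$. You instead argue by induction on $m$: you split $\cA$ along a distinguished coordinate $x$ into the upward-closed subfamilies $\cB_{\bar x}$ and $\cB_x$ of subsets of $M\setminus\{x\}$, observe that upward closure forces $\cB_{\bar x} \subseteq \cB_x$ and hence $\pr(x\in R) \geqslant \tfrac12$, and then combine the inductive tail bounds via the one-coin recursion $\pr(\Bin(m,\tfrac12)\geqslant k) = \tfrac12\pr(\Bin(m\!-\!1,\tfrac12)\geqslant k\!-\!1) + \tfrac12\pr(\Bin(m\!-\!1,\tfrac12)\geqslant k)$, using that a weighted average with more weight on the larger term can only increase. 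All the details (upward-closure of both $\cB_{\bar x}$ and $\cB_x$, non-emptiness of $\cB_x$, the degenerate case $\cB_{\bar x}=\emptyset$) are handled. What the paper's route buys is brevity given Harris as a black box, plus the stronger intermediate statement that $\E[f(|R|)] \geqslant \E[f(|S|)]$ for every monotone $f$; what your route buys is a fully elementary, self-contained inductive argument that also makes transparent exactly where upward closure is used (to secure $\pr(x\in R)\geqslant\tfrac12$), essentially re-deriving the one-dimensional FKG/Harris step by hand.
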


\begin{proof}[Proof of Lemma~\ref{Clem.dvw2}]
Let $S \inu 2^M$ (so $|S| \sim \Bin(m,\frac12)$), and let $f: \R \to \R$ be a non-decreasing function. It suffices to show that $\E[f(|R|)] \geqslant \E[f(|S|)]$.  For $A \subseteq M$ let $\bar{f}(A)= f(|A|)$ and let $h(A)= \ind_{A \in \cA}$. Then $\bar{f}$ and $h$ are non-decreasing functions on the lattice of subsets of $M$.  Hence by Harris's inequality~\cite{HarrisIneq} (or see for example~\cite{Grimmett2018})
\begin{eqnarray*}
  2^{-m} \sum_{A \in \cA} f(|A|)
&=&
  \sum_{A \subseteq M} 2^{-m} \bar{f}(A)h(A)\\
& \geqslant & 
  \sum_{A \subseteq M} 2^{-m} \bar{f}(A)\cdot \sum_{A \subseteq M} 2^{-m} h(A)\\
&=&
  \E[f(|S|)] \cdot 2^{-m} |\cA|.
\end{eqnarray*}
Thus
\[ \E[f(|R|)] = |\cA|^{-1} \sum_{A \in \cA} f(|A|) \geqslant \E[f(|S|)] \]
as required.
\end{proof}
\begin{proof}[Proof of Lemma~\ref{Clem.dvw1}]
List the maximal sets in $\cE$ as $M_1,\ldots,M_k$ for some $k \geqslant 1$.  For each set $A \in \cE$ let $\phi(A)$ be the least $i$ such that $A \subseteq M_i$.  For $i=1,\ldots,k$ let $\cE_i$ be the collection of sets $A \in \cE$ such that 
$\phi(A)=i$, and note that
$\cE_i$ is a collection of subsets of $M_i$ which is closed upwards in $M_i$ (and contains $M_i$).  Let $Y \sim \Bin(r,\frac12)$.  Then by Lemma~\ref{Clem.dvw2}, for each $t \geqslant 0$
\begin{eqnarray*}
  \pr(|R| \geqslant t)
& = &
  \sum_{i=1}^k \frac{|\cE_i|}{|\cE|}\: \pr(|R| \geqslant t \,|\, R \in \cE_i)\\
& \geqslant &
  \sum_{i=1}^k \frac{|\cE_i|}{|\cE|} \: \pr(Y \geqslant t) \; = \; \pr(Y \geqslant t). 
\end{eqnarray*}
Thus $|R| \geqslant_s Y$, as required.
\end{proof}

\subsection{Numbers of faces: proof of Theorem~\ref{thm.faces}  (b)}
Finally, we prove part (b) of Theorem~\ref{thm.faces}. We shall use
Theorem~\ref{thm.edges} (b), which was proven in Section~\ref{subsec.edgesproof}.  We shall also use Theorem~\ref{thm.conn} part (c), which has not yet been proved, but its proof is independent of Theorem~\ref{thm.faces}.
\begin{proof}[Proof of Theorem \ref{thm.faces} (b)] 
Let $j\in \mathbb{N}$, and let the genus function $g$ satisfy $n^{1+1/(j+1)}\ll g(n)\ll n^{1+1/j}$, except that when $j=1$ we raise the upper bound to $g(n) \leqslant \frac1{12} n^2$. Let $\eps > 0$, and let $R_n \inu \cA_n^g$.
By Theorem~\ref{thm.edges} part (b) (with $\eps$ replaced by $\tfrac{\eps}{2j}$),  whp $e(R_n) \geqslant (1- \tfrac{\eps}{2j})\, \tfrac{j+2}{j} \, g$.  (Here as earlier we write $g$ for $g(n)$.)
Also, by Theorem~\ref{thm.conn} part (c) whp $R_n$ is connected.  To prove the result it is thus sufficient to show that, if $G$ is a connected $n$-vertex graph with $e(G) \geqslant (1- \tfrac{\eps}{2j})\,\tfrac{j+2}{j} \, g$, then any cellular embedding $\phi$ of $G$ in a surface of Euler genus at most $g$ has at least $(1-\eps)\,\tfrac{2}{j}\, g$ faces, if $n$ is sufficiently large.
By Euler's formula, see equation~(\ref{eqn.Eulersformula}), the number $f$ of faces in the embedding $\phi$ satisfies
\begin{equation}
\begin{split}
    f & \geqslant e(G)-g-n+2 \;\;
    \geqslant (1- \tfrac{\eps}{2j})\,\tfrac{j+2}{j} \, g\, -g-n\\
    &= (\tfrac{2}{j} - \tfrac{\eps}{2j} - \tfrac{\eps}{j^2})\, g -n \;\;
    \geqslant (1-\eps)\, \tfrac{2}{j}\,g
\end{split}
\end{equation}
for $n$ sufficiently large, as required.
\end{proof}

We have shown that when $g(n) = o(n^2)$, the number of edges in $R_n$ is whp at least $n+g$. Similarly, for each embedding the number of faces is whp at least $n^2/15(n+g)$. 
For larger genus, we have improved on these lower bounds. Once $g(n) \gg n^{3/2}$ whp there are $(3+o(1))g$ edges in the graph and each embedding has $(2+o(1))g$ faces. This implies in particular, that each embedding is with high probability close to being a triangulation. For smaller genus, the best currently known upper bound on the number of edges is $3(n+g-2)$ from Euler's formula~(\ref{eqn.Eulersformula}), which is much larger than the lower bound of $n+g$. Can we give more precise bounds on the number of edges in this case?

For any given surface $S$, is a graph $R^S_n$ chosen uniformly at random from all graphs on $n$ vertices embeddable in the surface $S$ likely to have at least as many edges as a random planar graph on $n$ vertices?
We make a conjecture.
\begin{conjecture}
For any surface $S$ and $n \in \N$, we have
$\,\E[e(R^S_n)] \geqslant \E[e(R^{\mathbf{S}_0}_n)]$, where $\mathbf{S}_0$ denotes the sphere.
\end{conjecture}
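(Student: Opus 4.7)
The plan is to reduce the conjecture to an edge-by-edge probability comparison and then attempt to establish it via a structural injection exploiting the classical fact that adding a single edge to a graph can increase its Euler genus by at most $2$. The conclusion is trivial when $S = \mathbf{S}_0$, so assume $S$ is any other surface.

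Since the symmetric group $S_n$ acts on $[n]$ preserving both $\cP_n$ and $\cE^S_n$, the probability $\pr(e \in R^S_n)$ is independent of the choice of $e \in \binom{[n]}{2}$, and likewise for $R^{\mathbf{S}_0}_n$. Linearity of expectation then gives $\E[e(R^S_n)] = \binom{n}{2}\,\pr(e_0 \in R^S_n)$ for any fixed edge $e_0$, and similarly for $R^{\mathbf{S}_0}_n$. Hence the conjecture reduces to showing
\[
\pr(e_0 \in R^S_n) \;\geqslant\; \pr(e_0 \in R^{\mathbf{S}_0}_n)
\]
for a fixed edge $e_0$. Setting $A = \{G\in\cP_n : e_0 \in G\}$, $B=\{G\in\cP_n : e_0 \notin G\}$, $A' = \{G\in\cE^S_n : e_0 \in G\}$, $B' = \{G\in\cE^S_n : e_0 \notin G\}$, and using $A \subseteq A'$ and $B \subseteq B'$, a short algebraic calculation converts this to
\[
|A' \setminus A| \cdot |B| \;\geqslant\; |A| \cdot |B' \setminus B|.
\]
In words, among the non-planar graphs in $\cE^S_n$, the fraction containing $e_0$ is at least as large as the fraction containing $e_0$ among planar graphs.

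Second, to establish this I would exploit the classical fact mentioned above: for any graph $G$ and edge $e$, the Euler genus of $G + e$ is at most that of $G$ plus $2$ (route $e$ through an added handle or pair of crosscaps). The bijection $G \mapsto G - e_0$ identifies $A$ with $\{G \in B : G + e_0 \in \cP_n\}$, and identifies $A'$ with $\{G \in B' : G + e_0 \in \cE^S_n\}$. When the Euler genus of $S$ is at least~$2$, this structural fact implies both that every $G \in B$ with $G+e_0$ still planar satisfies $G+e_0 \in \cE^S_n$, and, more crucially, that many $G \in B' \setminus B$ (non-planar graphs in $\cE^S_n$ without $e_0$) admit the addition of $e_0$ while remaining in $\cE^S_n$. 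I would try to build an injection from $B' \setminus B$ into $A' \setminus A$ by adding $e_0$ using the extra genus of $S$, and then convert the resulting counts into the displayed inequality. The projective plane case should be handled by the parallel argument with crosscaps (paying $1$ rather than $2$ in Euler genus per new edge).

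The hardest part will be the ``boundary'' of $\cE^S_n$: graphs $G \in \cE^S_n$ whose Euler genus already equals the Euler genus of $S$ and to which $e_0$ cannot be added without leaving the class. For these graphs the naive injection fails, and some form of compensating switching (replacing an existing edge of $G$ by $e_0$ in a way that preserves embeddability) will be needed. A more ambitious alternative is to prove the stronger stochastic dominance $R_n^{\mathbf{S}_0} \leqslant_s R_n^S$ as edge sets via Strassen's theorem, which by symmetry reduces to an analogous family of ratio inequalities indexed by subsets $F \subseteq E(K_n)$. Routine FKG-style correlation inequalities do not apply here: the uniform measure on $\cE^S_n$ is not log-supermodular on $2^{E(K_n)}$, since although $A \cap B \in \cE^S_n$ whenever $A, B \in \cE^S_n$, in general $A \cup B \notin \cE^S_n$. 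I expect this boundary issue to be the real difficulty of the conjecture.
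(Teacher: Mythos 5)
This statement is a \emph{conjecture} in the paper; the authors do not prove it, and immediately after stating it they discuss possible strengthenings (stochastic domination, a coupling for $S$ versus $S^+$), making clear that they are posing it as an open problem. So there is no ``paper's proof'' to compare against.

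Your reduction is correct as far as it goes: by vertex-transitivity of both $\cP_n$ and $\cE^S_n$ under the $S_n$-action and linearity of expectation, the conjecture is equivalent to $\pr(e_0 \in R^S_n) \geqslant \pr(e_0 \in R^{\mathbf{S}_0}_n)$ for a fixed pair $e_0$, and your algebra correctly converts this (using $A \subseteq A'$, $B \subseteq B'$) to $|A'\setminus A|\cdot|B| \geqslant |A|\cdot|B'\setminus B|$. But the injection you propose from $B'\setminus B$ into $A'\setminus A$ does not by itself yield this: it would only give $|A'\setminus A| \geqslant |B'\setminus B|$, and one would additionally need $|A| \leqslant |B|$ (true, and easy by deleting $e_0$, but not stated). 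More seriously, as you candidly note, the injection $G \mapsto G+e_0$ fails exactly on the graphs whose Euler genus already saturates $eg(S)$, and ``the extra genus of $S$'' is of no help there since $G$ may already be using all of it. You have identified the real obstruction, but you have not overcome it, and neither has anyone else: the statement remains open. Your proposal is an honest and sensible sketch of an attack, not a proof, and it would be worth presenting it as such rather than as a plan that is expected to succeed.
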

This conjecture can be extended by asking whether $e(R^S_n) \geqslant_s e(R^{\mathbf{S}_0}_n)$ (that is,  $e(R^S_n)$ stochastically dominates $e(R^{\mathbf{S}_0}_n)$).
Another possible extension of the conjecture would be: let $S$ be any surface and let $S^+$ be the surface obtained by adding to $S$ a handle or cross-cap. Then for each $n \in \N$ can we couple $R^S_n \inu \cE^S$ and $R^{S+}_n \inu \cE^{S^+}$ so that $E(R^S_n) \subseteq E(R^{S+}_n)$?
Clearly, similar questions can be asked for the number of faces in an embedding of $R_n$.

\section{Number of leaves, maximum degree and face size of $R_n\inu\cA^g$}\label{section5}
In this section, we first prove Theorem~\ref{thm.leaves} on the number $\ell(R_n)$ of leaves in a random graph $R_n \inu \cA^g$, and present a corresponding lower density version (from \cc{\cite{MSsizes}}) in Lemma~\ref{lemma.pvlowerdensity}. We then go on to prove Theorem~\ref{thm.Delta2} and~\ref{thm.facesize} which give lower and upper bounds on the maximum degree $\Delta(R_n)$ and the maximum face size of a relevant embedding of $R_n$ respectively. We give lower density versions of these theorems in Lemmas~\ref{lemma.maxdeglowerdensity} and~\ref{facesize_lowerdensity}.

\subsection{Numbers of leaves: proof of Theorem~\ref{thm.leaves}}
\label{sec.leaves}
In this section we prove Theorem~\ref{thm.leaves} on the number $\ell(R_n)$ of leaves, and present a corresponding lower density result in Lemma~\ref{lemma.pvlowerdensity}.

\begin{proof}[Proof of Theorem~\ref{thm.leaves} (a)]
This follows directly from~(\ref{eqn.pend-g}).
%
\end{proof}

\begin{proof}[Proof of Theorem~\ref{thm.leaves} (b)]
Suppose that $g(n)=O(n/\log n)$ and $g$ is non-decreasing. By Theorem \ref{theorem:gc} (b), $\cA^g$ has radius of convergence $\rho(\cA^g)>0$.  Also, single vertices are attachable to $\cA^g$ (to form leaves) since $g$ is non-decreasing.
Let $0<\alpha<\rho(\cA^g)$. 
Let $a_n = (|\cA^g_n|/n!)^{1/n}$ for each $n \in \N$.  Then $\limsup_{n \to \infty} a_n = \rho(\cA^g)^{-1}$.
Let $n_1 < n_2< \cdots$ be such that $a_{n_i} \to \rho(\cA^g)^{-1}$ as $i \to \infty$. Then $\pr(\ell(R_{n_i}) <  \alpha n_i) = e^{-\Omega(n_i)} = o(1)$ as $i \to \infty$ by Theorem~\ref{thm.app} (see also equation~(\ref{eqn.penda})). 
\end{proof}

\begin{proof}[Proof of Theorem~\ref{thm.leaves} (c)]
Suppose that $g(n) \ll n^2$.  We must show that $\ell(R_n) < \frac{3n^2}{n+g}$ whp.
Since $R_n$ can have at most $n$ leaves, we may assume that $g(n) \geqslant 2n$ for each $n \in \N$; and it suffices to show that $\ell(R_n) < 2 n^2/g$ whp. For each $n \in \N$ let \[ \cB_n = \{G \in \cA_n^{g} \mid \ell(G) \geqslant 2 n^2 / g \text{ and } e(G) \geqslant n+g \}. \]
Since $e(R_n) \geqslant n+g$ whp by Theorem~\ref{thm.edges}, it suffices to show that whp $R_n \not\in \cB_n$, that is $|\cB_n|/|\cA^g_n| = o(1)$. The idea of the proof is to show that from the graphs in $\cB_n$ we can construct many graphs in $\cA^g_n$ with little double counting, so we cannot have started with many graphs in~$\cB_n$.

Let $n \in \N$, assume that $\cB_n \neq \emptyset$, and let $G\in\cB_n$. Let $k= \lceil n^2 / g \rceil$, and choose a set $\{v_1,\ldots,v_k\}$ out of the at least $2k-1$ leaves of $G$. 
Delete the $k$ edges incident to the $k$ chosen vertices $v_i$ to form $G^-$, and then insert the vertices $v_i$ one at a time in the middle of an edge. Since $e(G) \geqslant n+ g$ and $k \leqslant n$ we have $e(G^-) \geqslant n + g -k \geqslant g$; and thus we have at least
$\tbinom{2k-1}{k}\cdot g^{k}$
choices of where to insert the vertices $v_i$. (Note that several vertices $v_i$ can be inserted in the same edge from $G^-$.) Let the resulting graph be $G'$, and note that $G' \in \cA^g_n$.

How often is each such graph $G'$ constructed? First we guess the set of $k$ vertices $v_1,...,v_k$: there are at most $\binom{n}{k}$ choices. Now we guess the $k$ vertices to which the vertices $v_1,...,v_k$ were originally attached: there are at most $n^{k}$ choices for this. So every graph $G'$ is constructed at most
\[\tbinom{n}{k} \cdot n^{k} \leqslant \big(\frac{e \, n^2}{k}\big)^{k} \leqslant \left(e \, g\right)^{k}\]
times. It follows that
\[ \left|\cB_n\right|  \cdot \tbinom{2k-1}{k} g^k \leqslant \left|\cA^g_n\right| \cdot (e \, g)^k. \]
But $\binom{2k-1}{k} = (4+o(1))^k$ as $k \to \infty$, so
\[ \frac{\left|\cB_n\right|}{\left|\cA^g_n\right|} \leqslant  e^{k} \cdot \tbinom{2k-1}{k}^{-1} = (e/4 +o(1))^k \;\; \mbox{ as } n \to \infty.\]
Thus $|\cB_n|/|\cA^g_n| = o(1)$ as $n \to \infty$, as required.
\end{proof}

As long as the genus is not too big, there is also a lower-density result on the number of leaves, and indeed such a result was needed in~\cite{MSsizes}. Given $0<\delta <1$ we say that a set $I \subseteq {\mathbb N}$ has \emph{(asymptotic) lower density at least} $\delta$ if 
$|I \cap [n]| \geqslant \delta n$ for all sufficiently large $n \in {\mathbb N}$.
Let 
$g(n)$ be $O(n/\log n)$, and let $0< \eps <1$.  Then, by \cc{Lemma 28 of \cite{MSsizes}}, there exists a constant $c=c(g,\eps)$ such that the set $I^*(g,\eps)$
of integers $n \geqslant 1$ for which
\begin{equation} \label{eqn.slow}
\left|\cA_{n+1}^{g}\right| \leqslant c\, (n+1)  \left|\cA_{n}^{g}\right|
\end{equation}
has lower density at least $1-\eps$.

\begin{lemma}\cc{\cite[Lemma 29]{MSsizes}}\label{lemma.pvlowerdensity}
Let 
$g(n)$ be $O(n/\log n)$ and be non-decreasing, and let $0< \eps, \, p <1$.
Let $R_n \inu \cA^{g}$.
Then there exist $\alpha>0$ and $n_0\in \mathbb{N}$ such that for all $n\geqslant n_0$ with 
$n \in I^*(g,\eps)$ 
\begin{equation} \notag
    \mathbb{P}(R_n \text{ has at least } \alpha n \text{ leaves}) \geqslant p \text{ .}
\end{equation}
\end{lemma}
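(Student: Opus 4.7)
The plan is to combine a direct first-moment count with the exponential concentration furnished by the Pendant Appearances Theorem (Theorem~\ref{thm.app}), and to transfer the resulting bound into a probability estimate on $\cA_n^g$ using the growth-ratio hypothesis defining $I^*(g,\eps)$. Throughout I will write $b_k:=|\cA_k^g|/k!$, so that $n\in I^*(g,\eps)$ amounts to $b_{n+1}/b_n\leqslant c$ for the constant $c=c(g,\eps)$ from Lemma~28 of~\cite{MSsizes}.

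First, double-counting pairs $(G,v)$ where $v$ is a leaf of $G\in\cA_n^g$ (using that $g$ is non-decreasing, so leaves may be freely attached to and detached from $\cA^g$) gives, exactly as in the proof of Theorem~\ref{thm.leaves}(b),
\[
 \E[\ell(R_n)]\;=\;(n-1)\,\frac{b_{n-1}}{b_n}.
\]
Whenever $n-1\in I^*(g,\eps)$ this yields $\E[\ell(R_n)]\geqslant (n-1)/c$; by a reverse Markov argument this already implies that $\pr(\ell(R_n)\geqslant\alpha n)$ is bounded below by a positive constant. That constant however falls short of an arbitrary $p<1$, so a second ingredient is needed.

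To that end I would apply Theorem~\ref{thm.app}(a) with $H=K_1$. Since $\rho:=\rho(\cA^g)>0$ by~\eqref{eqn.rhopos} and a single vertex is attachable to $\cA^g$, for each $\eps_1\in(0,1)$ there exists $\nu=\nu(\eps_1,\rho)>0$ such that
\[
 \bigl|\{G\in\cA_n^g : \ell(G) < (1-\eps_1)\rho\,n\}\bigr|\;\leqslant\;n!\,(\rho+\nu)^{-n}
\]
for all sufficiently large $n$. Dividing by $|\cA_n^g|=n!\,b_n$ gives
\[
 \pr\bigl(\ell(R_n) < (1-\eps_1)\rho\,n\bigr)\;\leqslant\;\bigl(b_n\,(\rho+\nu)^{n}\bigr)^{-1}.
\]
Setting $\alpha:=(1-\eps_1)\rho$, it therefore suffices to show that $b_n(\rho+\nu)^n\to\infty$ along $n\in I^*(g,\eps)$.

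The main obstacle is precisely this final step. Although $\limsup_n b_n^{1/n}=\rho^{-1}$, in principle $b_n^{1/n}$ can dip well below $\rho^{-1}$ for isolated $n$, and the whole purpose of the definition of $I^*(g,\eps)$ is to rule out such dips being dense in $\N$. Combining the trivial lower bound $b_{n+1}/b_n\geqslant 2-o(1)$ from~\eqref{eqn.growth} with the upper bound $b_{n+1}/b_n\leqslant c$ along $I^*$, the local increments of $\log b_n$ are controlled on consecutive runs inside $I^*$. Replaying the accounting behind the choice of $c$ in Lemma~28 of~\cite{MSsizes}---which was made precisely so that the complement of $I^*$ has upper density at most $\eps$---one shows that for $n\in I^*(g,\eps)$ sufficiently large the running average $(\log b_n)/n$ lies close enough to $\log(\rho^{-1})$ to guarantee $b_n(\rho+\nu)^n\to\infty$. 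The lemma then follows with $\alpha=(1-\eps_1)\rho>0$.
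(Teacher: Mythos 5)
Your proposal has a genuine and non-repairable gap at the final step, which you correctly flag as ``the main obstacle'' but incorrectly suggest can be filled by ``replaying the accounting behind Lemma~28.'' The difficulty is that the defining property of $I^*(g,\eps)$ controls the \emph{ratio} $b_{n+1}/b_n$, not the \emph{value} of $b_n$, and a bound on the ratio along a set of lower density $1-\eps$ simply does not force $b_n^{1/n}$ to stay near $\rho^{-1}$ on that set. Concretely, consider numbers $b_n$ with $b_{j+1}/b_j = 2$ for every $j$ that is not a power of $2$, and $b_{2^k+1}/b_{2^k}$ a large spike chosen so that $b_{2^k}^{1/2^k}\to\rho^{-1}$. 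Then $\limsup b_n^{1/n}=\rho^{-1}$, the set $\{j : b_{j+1}/b_j\leqslant c\}$ (for any $c\geqslant 2$) has lower density $1$, yet for $n=2^k-1$ (which lies in that set) one has $b_n^{1/n}\to\sqrt{2/\rho}$, strictly less than $\rho^{-1}$ as soon as $\rho<\tfrac12$. Since $\rho(\cA^g)\leqslant\rho(\cP)\approx 0.037$, the gap between $\sqrt{2/\rho}$ and $\rho^{-1}$ is large, and the Pendant Appearances Theorem provides a $\nu>0$ but gives no guarantee that $\rho+\nu$ exceeds $\sqrt{\rho/2}$. So your claim that $b_n(\rho+\nu)^n\to\infty$ along $n\in I^*$ is not a consequence of the hypotheses, and the step you need is in fact false at the level of generality at which you argue it.

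There are also two smaller issues. First, leaves are attachable to $\cA^g$ (since $g$ is non-decreasing) but not detachable: removing a leaf from $G\in\cA_n^g$ gives an $(n-1)$-vertex graph of the \emph{same} Euler genus as $G$, which need only be at most $g(n)$, not $g(n-1)$. So your double-count gives $\E[\ell(R_n)]\geqslant(n-1)b_{n-1}/b_n$, not equality; this happens to be harmless for your intended use, but the stated equality and the claim that leaves may be ``freely\dots detached'' are wrong. Second, your first-moment bound uses $n-1\in I^*$ while the lemma asserts a conclusion for $n\in I^*$; this index shift would need to be squared away. The real content of the lemma has to come from an argument that uses the inequality $|\cA^g_{n+1}|\leqslant c(n+1)|\cA^g_n|$ directly and quantitatively (e.g.\ a double-counting construction that attaches leaves and compares $\cA_n^g$ with $\cA_{n+1}^g$), rather than trying to translate the ratio bound into a pointwise lower bound on $|\cA_n^g|/n!$, which is what your step~4 would require and what the counterexample rules out.
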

This result is used in the proof of Lemma~\ref{lemma.connectedLD}, from which we deduce Theorem~\ref{thm.conn} (b).
Note that it is stronger than Theorem~\ref{thm.leaves} (b) in that it concerns a subset of integers $n$ with lower density near 1, but weaker in that it only asserts the existence of a suitable constant $\alpha >0$.

Currently we know from Theorem~\ref{thm.leaves} that in the embeddable case, when $g(n)$ is $o(n/\log^3 n)$ there is a linear number of leaves whp; 
when $g(n)$ is $O(n/\log n)$ the limsup of the probability that there are linearly many leaves is equal to 1; and 
when $g(n)$ is $o(n^2)$ the number of leaves is bounded above by $3 n^2/(n+g)$ whp,
which implies that as soon as $g(n)\gg n$ the graph $R_n$ has whp 
a sublinear number of leaves. Can we find corresponding lower bounds for these results and can we find bounds for 
the range $n/\log n \ll g(n) \ll n$? Further, is there a sharp phase transition for the number of leaves occurring at around $n/\log n$, at around $n$, or somewhere in between; or is the transition actually not sharp and the typical number of leaves slowly decreases when $g(n)$ increases from $n/\log n$ towards $n^2$, so does the number of leaves correspond more to our current upper bound?

In the hereditary case, see Section~\ref{sec.hered}, we can be more precise and in particular when $g(n)$ satisfies $g(n+1)\geqslant g(n)+2$ for all $n$, the graph $R_n$ has at most a constant number of leaves whp. 
Does a similar result hold in the embeddable case?

\subsection{Maximum degree: proof of upper bound in Theorem~\ref{thm.Delta2} (a)} \label{upperboundmaxdeg}
For the proof of the upper bound in Theorem~\ref{thm.Delta2} (a) on the maximum degree $\Delta(R_n)$, it is convenient to state two preliminary lemmas from \cite{MSsizes}, Lemmas~\ref{lem.fewpend} and~\ref{lem.Delta} here. We also use our earlier results on numbers of leaves. We further state and prove a corresponding lower density version for the case when $g(n)$ is $O(n/\log n)$ in Lemma~\ref{lemma.maxdegLDupperbound}, which will be used for the proof of Theorem~\ref{thm.Delta2} (b) in Section~\ref{degfacesize_LDversions}. 
Lemma~\ref{lem.fewpend} concerns the maximum number of leaves adjacent to any vertex.
\begin{lemma}\cc{\cite[Lemma 30]{MSsizes}} \label{lem.fewpend}
Let $\cG$ be a class of graphs which is closed under (simultaneously) detaching and re-attaching any leaf, and let $R_n \inu \cG$. Then whp each vertex in $R_n$ is adjacent to at most $2 \log n /\log\log n$ leaves.
\end{lemma}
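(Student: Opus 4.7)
The plan is a switching (double-counting) argument: for each fixed vertex $v \in [n]$ I would bound the fraction of graphs in $\cG_n$ with many leaves adjacent to $v$, and then union-bound over $v$. Fix a parameter $k=k(n)$ with $k \leqslant n/4$ and set
\[
\cB_n^{v,k} \;=\; \{G \in \cG_n : v \text{ has at least } k \text{ leaf-neighbours in } G\}.
\]
For each $G \in \cB_n^{v,k}$ designate canonically (say lexicographically) a $k$-subset $S(G)$ of its leaves attached to $v$. For every injection $f : S(G) \to [n] \setminus (S(G) \cup \{v\})$ let $G_f$ be obtained from $G$ by simultaneously detaching every $u \in S(G)$ from $v$ and reattaching it to $f(u)$; by the closure hypothesis on $\cG$ we have $G_f \in \cG_n$, and the number of such injections is at least $(n-2k)^k \geqslant (n/2)^k$.

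The key step is the preimage count. Given any $G' \in \cG_n$, I claim the number of pairs $(G,f)$ with $G_f = G'$ is at most $\binom{n}{k}$. Indeed, since detaching and reattaching a leaf preserves it as a leaf, $S(G)$ is a $k$-subset of the leaves $L(G')$ of $G'$; and once this subset is specified, $f$ is recovered (as the unique $G'$-neighbour of each $u \in S(G)$) and so is $G$ (reattach each $u \in S(G)$ to $v$). Hence
\[
|\cB_n^{v,k}| \cdot (n/2)^k \;\leqslant\; \binom{n}{k}\, |\cG_n| \;\leqslant\; (en/k)^k\, |\cG_n|,
\]
which gives $|\cB_n^{v,k}|/|\cG_n| \leqslant (2e/k)^k$.

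Union-bounding over $v$ yields
\[
\pr\bigl(\text{some vertex of } R_n \text{ has at least } k \text{ leaf-neighbours}\bigr) \;\leqslant\; n\,(2e/k)^k.
\]
Taking $k = \lceil 2\log n/\log\log n\rceil$, a short computation gives $k\log(2e/k) = -(2+o(1))\log n$, so the right-hand side is $n^{-1+o(1)} = o(1)$, completing the argument.

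No serious obstacle is anticipated; the only point requiring care is the preimage bookkeeping, namely verifying that the canonical set $S(G)$ is automatically a subset of $L(G')$ so that it can be recovered from $G'$ alone without recording additional data about the switching. Everything else is elementary counting and the standard estimate $\binom{n}{k} \leqslant (en/k)^k$.
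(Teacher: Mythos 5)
Your switching argument is correct and is the standard double-counting approach one would expect here (the paper itself only cites Lemma~30 of the companion paper~\cite{MSsizes}, but this is precisely the type of argument used there and throughout the McDiarmid--Reed line of work on maximum degree). The forward count $(n-k-1)_k \geqslant (n-2k)^k \geqslant (n/2)^k$ for $k\leqslant n/4$ is right, and the preimage count is handled correctly: since each moved vertex remains a leaf, $S(G)$ is recoverable as a $k$-subset of $L(G')$ and, once guessed, determines both $f$ (via $G'$-neighbourhoods) and $G$ (reattach to $v$), so the map $(G,f)\mapsto S(G)$ is injective for fixed $G'$ and the bound $\binom{n}{k}$ follows. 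The asymptotics $k\log(2e/k)=-(2+o(1))\log n$ at $k\sim 2\log n/\log\log n$ then close the argument. Two minor points worth making explicit: (i) to conclude ``at most $2\log n/\log\log n$'' you should take $k=\lfloor 2\log n/\log\log n\rfloor+1$ so that the complementary event ``more than $2\log n/\log\log n$'' is exactly ``$\geqslant k$''; and (ii) moving the $k$ leaves ``simultaneously'' is justified by applying the single-leaf closure hypothesis $k$ times in succession, noting that since $f$ maps $S(G)$ into $[n]\setminus(S(G)\cup\{v\})$, the edges $vu_j$ for the remaining $u_j\in S(G)$ are untouched at each intermediate step, so each $u_j$ is still an available leaf of $v$ when its turn comes.
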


Let $\cS$ be the set of graphs $G$ such that if $G$ has $n$ vertices then each vertex is adjacent to at most $2\log n/\log\log n$ leaves (where $\cS$ is for {\bf s}mall).  Since $\cA_n^g$ is closed under (simultaneously) detaching and re-attaching leaves, by Lemma~\ref{lem.fewpend} we have $R_n \in \cS$ whp. Now, given $0<\alpha<1$, let $\cL^\alpha$ be the set of graphs $G$ such that if $G$ has $n$ vertices then it has at least $\alpha n$ leaves.  The next lemma concerns $\cS$, $\cL^\alpha$ and maximum degree.
\begin{lemma}\cc{\cite[Lemma 31]{MSsizes}} \label{lem.Delta}
Let $0<\alpha<1$, let $b=b(n) = \lceil (8/\alpha)\, \log n \rceil$, and let
\[\cB = \{ G \in \cL^\alpha  \cap \cS : \Delta(G) \geqslant b(n) \mbox{ where } n=v(G) \}. \]
There is a function $\eta(n)=o(1)$ as $n \to \infty$ such that the following holds: for all $n \in \N$ and all surfaces $S$, the random graph $R^S_n \inu \cE^S$
satisfies $\pr(R^S_n \in \cB) \leqslant \eta(n)$.
\end{lemma}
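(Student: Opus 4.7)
The plan is to use a switching argument: for each $G\in\cB$ we re-attach many leaves to the maximum-degree vertex $v(G)$, producing a graph in the small set $\cE^S_n\setminus\cS$ (which is controlled by Lemma~\ref{lem.fewpend}), and counting the pairs of (source, switch) then gives an upper bound on $|\cB|$.

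For $G\in\cB$, let $v=v(G)$ be the lexicographically smallest vertex of maximum degree, so $\deg_G(v)\geqslant b(n)=\lceil(8/\alpha)\log n\rceil$. Since $G\in\cS$ the vertex $v$ has at most $2\log n/\log\log n$ leaf-neighbours, so together with $G\in\cL^\alpha$ the set $L^-(G):=\{u\text{ a leaf of }G:u\not\sim v\}$ satisfies $|L^-(G)|\geqslant\alpha n/2$ for $n$ large. Set $k=k(n)=\lceil 3\log n/\log\log n\rceil$. For each ordered $k$-tuple $\vec u=(u_1,\ldots,u_k)$ of distinct elements of $L^-(G)$, writing $x_i=N_G(u_i)$, let $G'=G_{\vec u}$ be the graph obtained by, for each $i$, deleting the edge $u_ix_i$ and adding the edge $u_iv$. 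Each such step re-attaches a leaf, so preserves embeddability in $S$; thus $G'\in\cE^S_n$, and in $G'$ the vertex $v$ has at least $k>2\log n/\log\log n$ leaf-neighbours, so $G'\in\cE^S_n\setminus\cS$.

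Define $\Phi(G,\vec u):=(G',v,\vec u,\vec x)$ with $\vec x=(x_1,\ldots,x_k)$. This map is injective, because from the right-hand side we recover $G=G'-\{u_iv\}_i+\{u_ix_i\}_i$. The domain has size at least $|\cB|\cdot(|L^-(G)|)_k\geqslant |\cB|(\alpha n/4)^k$; the codomain --- tuples $(G',v,\vec u,\vec x)$ with $G'\in\cE^S_n\setminus\cS$, $v\in[n]$, $\vec u$ an ordered $k$-tuple of distinct leaves of $G'$ adjacent to $v$, and $\vec x\in[n]^k$ --- has size at most $|\cE^S_n\setminus\cS|\cdot n\cdot n^k\cdot n^k$. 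Rearranging,
\[
\frac{|\cB|}{|\cE^S_n|}\;\leqslant\;\frac{|\cE^S_n\setminus\cS|}{|\cE^S_n|}\cdot n^{k+1}\Bigl(\frac{4}{\alpha}\Bigr)^k \;=\;\frac{|\cE^S_n\setminus\cS|}{|\cE^S_n|}\cdot \exp\!\bigl(O((\log n)^2/\log\log n)\bigr).
\]

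The main obstacle is supplying a sufficiently quantitative version of Lemma~\ref{lem.fewpend}, so that the decay of $|\cE^S_n\setminus\cS|/|\cE^S_n|$ beats the polynomial loss $n^{k+1}(4/\alpha)^k$ arising from having to guess $v$ and the tuples $\vec u$ and $\vec x$ in the codomain. The proof of Lemma~\ref{lem.fewpend} in~\cite{MSsizes}, via the Pendant Appearances Theorem (Theorem~\ref{thm.app}) applied to stars $K_{1,\lceil 2\log n/\log\log n\rceil}$, yields a decay rate that is super-polynomial in $n$ and uniform over the surface $S$; this is enough to give $\pr(R^S_n\in\cB)\leqslant\eta(n)$ for some $\eta(n)=o(1)$ as $n\to\infty$. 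The constant $8/\alpha$ in $b(n)$ is chosen precisely to balance these factors; alternatively one can sharpen the codomain bound by restricting the $u_i$ to come from pendant $K_2$'s of $G$ (so that each $x_i$ becomes a leaf of $G'$), at the cost of another appeal to Theorem~\ref{thm.app} to ensure pendant $K_2$'s are plentiful in $G$.
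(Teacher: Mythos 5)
Your switching pushes $G$ into $\cE^S_n\setminus\cS$ and then hopes to pay for the $n^{k+1}(4/\alpha)^k$ loss by the rarity of $\cS^c$, but the arithmetic does not close and the crucial claim in the last paragraph is false. First, the loss factor: with $k=\lceil 3\log n/\log\log n\rceil$ you have $n^{k+1}(4/\alpha)^k=\exp\!\bigl(\Theta((\log n)^2/\log\log n)\bigr)$, which grows faster than any $\exp\!\bigl(O(\log n\,\log\log n)\bigr)$, let alone any fixed power of $n$. Second, the gain: the threshold $2\log n/\log\log n$ in the definition of $\cS$ sits exactly at the typical maximum ``leaf-degree'' of a random graph from a bridge-addable class with linearly many leaves, so the probability of leaving $\cS$ decays only polynomially. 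Concretely, the natural switching proof of Lemma~\ref{lem.fewpend} (detach the $\ge j$ leaves at a bad vertex $v$ and re-attach them arbitrarily) gives, for all $S$,
\[
\pr(R^S_n\notin\cS)\ \leqslant\ n\binom{n}{j}\Big/\Bigl(\tfrac n3\Bigr)^{\!j}\ \leqslant\ n\,(3e/j)^j\ =\ n^{-1+o(1)}\quad\text{with }j=\tfrac{2\log n}{\log\log n},
\]
since $j\log j=(2+o(1))\log n$. This is $\exp(-\Theta(\log n))$, not super-polynomial, and in particular it is crushed by your $\exp\!\bigl(\Theta((\log n)^2/\log\log n)\bigr)$. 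Your appeal to Theorem~\ref{thm.app} cannot rescue this: that theorem is stated for a \emph{fixed} connected $H$, with $\nu$ depending on $H$, and it does not apply to stars on $\Theta(\log n/\log\log n)$ vertices; moreover a vertex with many leaf-neighbours is not a pendant appearance of a star (the hub can have arbitrary other edges), so even informally the reduction is wrong.

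The difficulty is structural, not just a constant: to make $G'\notin\cS$ you are forced to move $k>2\log n/\log\log n$ leaves, and guessing the original attachment points $\vec x$ in the codomain costs $n^k\ge\exp\!\bigl(2(\log n)^2/\log\log n\bigr)$, whereas no event of the form ``some vertex has more than $c\log n/\log\log n$ leaf-neighbours'' can have probability smaller than $n^{-O(1)}$ (the threshold is near-optimal and the failure probability is polynomial). Your proposed refinement (taking the $u_i$ from pendant $K_2$'s so that the $x_i$ become leaves of $G'$) still leaves you guessing $\vec x$ among up to $n$ leaves, so it saves nothing. A working proof must avoid paying $n$ per moved leaf when it reconstructs $G$ from $G'$ — for instance by choosing the target class and the switching so that the detached/attached vertices and their former partners are jointly recoverable from $G'$ up to only polynomially many guesses in total, rather than $n^{\Theta(\log n/\log\log n)}$. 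As written, the bound you derive for $|\cB|/|\cE^S_n|$ diverges, and the proof does not establish the lemma.
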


We can now obtain the desired upper bounds on the maximum degree in Theorem~\ref{thm.Delta2} (a).

\begin{proof}[Proof of Theorem~\ref{thm.Delta2} (a) upper bound]
Let $g(n)$ be $o(n/\log^3n)$.  By Theorem \ref{theorem:gc} (a), $\cA^g$ has growth constant~$\gamma_\cP$. 
Let $0< \alpha <\gamma_\cP^{-1}$. Then by equation~(\ref{eqn.penda}), 
whp $R_n$ has at least $\alpha n$ leaves, that is $R_n \in \cL^\alpha$.  We have already seen that whp $R_n \in \cS$.  Hence by Lemma~\ref{lem.Delta}, whp $\Delta(R_n)<b(n)$, where $b(n) = \lceil (8/\alpha) \log n \rceil$.
\end{proof}
We now give a lower density result for the maximum degree, for which our proof requires $g$ to be non-decreasing. Note that this lemma was proven in a slightly weaker form in \cite{MSsizes}.
\begin{lemma}\label{lemma.maxdegLDupperbound}
Let $g(n)$ be $O(n/\log n)$ 
and be non-decreasing, 
and let $R_n\in_u \cA^g$. For any $\eps>0$ there exists a constant $c>0$ such that the following holds. For $n\in\mathbb{N}$ let
\[p_n = \mathbb{P}(\Delta(R_n)\leqslant c \log n).\]
Then the set $I$ of integers $n$ such that $p_n\geqslant 1-\eps$ has asymptotic lower density at least $1-\eps$.
\end{lemma}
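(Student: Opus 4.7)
The plan is to mirror the proof of the upper bound in Theorem~\ref{thm.Delta2}(a), but to replace the ``whp $R_n \in \cL^\alpha$'' step by its lower-density analogue, Lemma~\ref{lemma.pvlowerdensity}. Concretely, given $\eps > 0$, I will exhibit a set $I \subseteq \N$ of lower density at least $1-\eps$ on which $\pr(\Delta(R_n) \leqslant c \log n) \geqslant 1-\eps$, for a suitable constant $c$ depending only on $\eps$ and $g$.

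Three ingredients will be combined by a union bound over bad events. First, apply Lemma~\ref{lemma.pvlowerdensity} with lower-density parameter $\eps/2$ and probability parameter $p = 1-\eps/3$, obtaining a constant $\alpha>0$ and an integer $n_0$ such that for every $n \geqslant n_0$ in the set $I^*(g,\eps/2)$ (which has lower density at least $1-\eps/2$), one has $\pr(R_n \in \cL^\alpha) \geqslant 1-\eps/3$. Second, because $g$ is non-decreasing, $\cA^g$ is closed under simultaneously detaching and re-attaching any leaf, so Lemma~\ref{lem.fewpend} gives $\pr(R_n \in \cS) \to 1$, and in particular $\pr(R_n \in \cS) \geqslant 1-\eps/3$ for $n$ large. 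Third, with $b(n) = \lceil (8/\alpha)\log n \rceil$, Lemma~\ref{lem.Delta} yields
\[
\pr\bigl(R_n \in \cL^\alpha \cap \cS \text{ and } \Delta(R_n) \geqslant b(n)\bigr) \leqslant \eta(n) \leqslant \eps/3
\]
for all sufficiently large $n$.

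A union bound then gives $\pr(\Delta(R_n) \geqslant b(n)) \leqslant \eps$ for all sufficiently large $n$ in $I^*(g,\eps/2)$. Taking $c = 1 + 8/\alpha$ ensures $b(n) \leqslant c\log n$ for all large $n$, so the set $I = \{n : p_n \geqslant 1-\eps\}$ contains all but finitely many elements of $I^*(g,\eps/2)$; its lower density is therefore at least $1-\eps/2 \geqslant 1-\eps$, as required.

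The step requiring the most care is the invocation of Lemma~\ref{lem.Delta}, which is stated for $R^S_n \inu \cE^S$ on a fixed surface $S$, rather than for $R_n \inu \cA^g$ directly. For each fixed $n$, however, $\cA^g_n$ coincides with $\cE^S_n$ for a single surface $S$ of Euler genus at most $g(n)$ when $\cA$ is $\cO\cE$, $\cN\cE$, or $\cO\cE \cap \cN\cE$; and in the remaining case $\cA = \cE$ one writes $\cE^g_n = \cO\cE^g_n \cup \cN\cE^g_n$ and applies Lemma~\ref{lem.Delta} to each piece, losing only a factor of $2$ that is absorbed into $\eta(n)=o(1)$. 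This bridging is precisely the one implicit in the proof of Theorem~\ref{thm.Delta2}(a), so no genuinely new difficulty arises.
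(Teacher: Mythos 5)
Your proposal is correct and takes essentially the same approach as the paper: combine the lower-density leaf bound from Lemma~\ref{lemma.pvlowerdensity} with the high-probability membership in $\cS$ from Lemma~\ref{lem.fewpend} and the maximum-degree bound from Lemma~\ref{lem.Delta}, then close with a union bound on the set $I^*$. The paper splits the error budget as $\eps/2 + o(1)$ whereas you use $\eps/3+\eps/3+\eps/3$, and you spell out the passage from the fixed-surface statement of Lemma~\ref{lem.Delta} to $\cA^g$ (which the paper leaves implicit), but neither is a substantive difference.
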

\begin{proof}
As noted earlier, see~(\ref{eqn.slow}), by \cc{Lemma 28 of \cite{MSsizes}} there exists a constant $c_1=c_1(g,\eps)$ such that the set $I^* = I^*(g,\eps)$ of integers $n \geqslant 1$ for which
\begin{equation} \label{eqn.slowgrowth} \notag
\left|\cA_{n+1}^{g}\right| \leqslant c_1 \, (n+1)  \left|\cA_{n}^{g}\right|
\end{equation}
has lower density at least $1-\eps$. By Lemma \ref{lemma.pvlowerdensity} with $p=1-\eps/2$ there exists a constant $\alpha >0$ such that for $n \in I^*$ we have 
$\mathbb{P}(R_n \in \cL^\alpha_n) \geqslant 1-\eps/2 +o(1)$
(where $\cL^\alpha_n$ is the set of graphs $G$ on $[n]$ with at least $\alpha n$ leaves).
Let $b(n) = \lceil (8/\alpha) \log n \rceil$.
We have already seen that whp $R_n \in \cS$. 
Hence by Lemma~\ref{lem.Delta}, for $n\in I^*$
\[ \pr(\Delta(R_n) \geqslant b) \leq
\pr\left( (R_n \in \cL^\alpha  \cap \cS) \land (\Delta(R_n) \geqslant b)\right) + \pr(R_n \not\in \cL^\alpha  \cap \cS) \leqslant \eps/2 + o(1)\,. \]
Thus for all sufficiently large $n$ with $n \in I^*$ we have $\pr(\Delta(R_n)<b(n)) > 1-\eps$, which completes the proof.
\end{proof}

\subsection{Face sizes: proof of upper bound in Theorem~\ref{thm.facesize} (a)}
\label{subsec.facesizes}
In this section, we prove the upper bound in Theorem~\ref{thm.facesize} (a) on the maximum face size. We also state and prove a corresponding lower density version for the case when $g(n)$ is $O(n/\log n)$ in Lemma~\ref{lemma.facesizeLDupper}, which will be used to prove Theorem~\ref{thm.facesize} (b) in Section~\ref{degfacesize_LDversions}.

\begin{proof}[Proof of upper bound in Theorem~\ref{thm.facesize} (a)] 
Let $0<\alpha<\rho(\cP)$.
Let the constant $c$ satisfy 
$c > e/\alpha \, +1$, and let $\cB$ be the set of graphs $G \in \cA^g$ such that there is a relevant embedding of $G$ which has a face $F$ such that $|E(F)| \geqslant c \log v(G) +1$. Here $E(F)$ is the set of edges in the boundary of $F$, and the size of $F$ is at most $2|E(F)|$. It suffices to show that $|\cB_n|/|\cA_n^g| = o(1)$. Let $\cL_n^{\alpha}$ be the set of graphs $G$ on $[n]$ with at least $\alpha n$ leaves, and recall from Theorem~\ref{thm.leaves} (a) that $|\cL_n^{\alpha}\cap \cA_n^g|/|\cA^g_n| = 1+o(1)$. Let $\cB'_n = \cB_n \cap \cL_n^{\alpha}$. To show that  $|\cB_n|/|\cA^g_n| = o(1)$ it suffices to show that  $|\cB'_n|/|\cA^g_n| = o(1)$.

Let $n \in \N$ be large. Let $k= \log n$, and to avoid cluttering up notation assume that $k \in \N$. Let $G \in \cB'_n$. Fix a relevant embedding $\phi$ of $G$ in a
surface $S$ such that there is a face $F$ with $|E(F)| \geqslant c\, \log v(G) +1$.  List the (distinct) edges in $E(F)$ in (say) lexicographic order.
Pick an ordered list of $k+1$ distinct leaves $v_0, v_1,\ldots,v_k$ of $G$ (making at least $(\alpha n)_{(k+1)}= (1+o(1))\, (\alpha n)^{k+1}$ choices).  Let $G'$ be $G$ less the $k+1$ leaves $v_0, v_1,\ldots,v_k$.  We may simply delete these leaves from the embedding $\phi$ of $G$ to obtain an embedding $\phi'$ of $G'$ in the surface $S$ containing a face $F'$ with $|E(F')| \geqslant |E(F)|-(k+1) \geqslant (c-1) \log n$.

Pick a set of $k$ edges in $E(F')$, and list them in lexicographic order as $e_1,\ldots,e_k$.  We form the graph $G''$, and by amending $\phi'$ we form an embedding $\phi''$ of $G''$ in the surface $S$, as follows.  Insert $v_0$ in the open face $F'$; use $v_1,\ldots,v_k$ to subdivide the edges $e_1,\ldots,e_k$; and add the edges $v_0 v_i$ for $i=1,\ldots,k$ drawn in the open face $F'$ (so they are disjoint expect for meeting at $v_0$). Since  $\phi''$ is an embedding of $G''$ in the surface $S$ we have $G'' \in \cA^g_n$. The number of constructions of graphs $G''$ we have made is at least
\[(1+o(1))\, |\cB'_n|\, (\alpha n)^{k+1}\, \binom{(c-1) \log n}{k} \geqslant (1+o(1))\, |\cB'_n|\, (\alpha n)^{k+1}\, (c-1)^k\,.\]
If we guess the vertex $v_0$ in $G''$ then we can read off the (ordered) list $v_1,\ldots,v_k$; and thus $G''$ can be constructed at most $n^{k+2}$ times (after guessing $v_0$, we can just guess the original neighbours in $G$ of the leaves $v_0,\ldots,v_k$).  Hence
\[ |\cB'_n|\cdot (1+o(1))\,  (\alpha n)^{k+1}\, (c-1)^k \leqslant |\cA^g_n| \cdot n^{k+2}. \]
Thus
\[ \frac{|\cB'_n|}{|\cA^g_n|} \leqslant (1+o(1)) \,  (\alpha)^{-1}  \left( (c-1) \alpha  \right)^{-k}\, n = o(1) \]
since 
$(c-1)\alpha >e$ and so
$\left( (c-1) \alpha  \right)^{-k} \ll n^{-1}$.
Thus $\frac{|\cB'_n|}{|\cA^g_n|} =o(1)$, as required.
\end{proof}
We now give a corresponding lower density version for the upper bound on the maximum face size, for which our proof requires $g$ to be non-decreasing.
\begin{lemma}\label{lemma.facesizeLDupper}
Let $g(n) = O(n/\log n)$ be non-decreasing, and let $R_n \in_u \cA^g$. For any $\eps >0$ there is a constant $c>0$ such that the following holds. For $n\in \mathbb{N}$ let $p_n$ be the probability that the maximum face size in a relevant embedding of $R_n$ is at most $c\log n$. Then the set of integers $n$ such that $p_n\geqslant 1-\eps$ has lower density at least $1-\eps$.
\end{lemma}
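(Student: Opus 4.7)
The plan is to run the same double-counting argument as in the proof of the upper bound of Theorem~\ref{thm.facesize}~(a), but to feed in the lower-density leaf estimate from Lemma~\ref{lemma.pvlowerdensity} in place of the whp statement coming from Theorem~\ref{thm.leaves}~(a). Fix $\eps > 0$. First I would apply Lemma~\ref{lemma.pvlowerdensity} with its parameter $\eps$ replaced by $\eps/2$ and $p = 1 - \eps/2$ to obtain a constant $\alpha = \alpha(g,\eps) > 0$, an index set $I^* = I^*(g,\eps/2) \subseteq \N$ of lower density at least $1 - \eps/2$, and $n_0 \in \N$ such that for every $n \geqslant n_0$ with $n \in I^*$ we have $\pr(\ell(R_n) \geqslant \alpha n) \geqslant 1 - \eps/2$.

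Next I would choose $c$ large enough that $(c-1)\alpha > e$, that is $c > 1 + e/\alpha$. Let $\cB_n$ be the set of $G \in \cA^g_n$ admitting a relevant embedding with some face $F$ of size $|E(F)| \geqslant c \log n + 1$, let $\cL^\alpha_n$ be the set of graphs on $[n]$ with at least $\alpha n$ leaves, and set $\cB'_n = \cB_n \cap \cL^\alpha_n$. The key observation is that the leaf-deletion / face-subdivision construction used in the proof of the upper bound of Theorem~\ref{thm.facesize}~(a) uses no growth-constant assumption on $\cA^g$: it uses only that the resulting graph $G''$ still has an embedding in the same surface $S$ (so lies in $\cA^g_n$), which holds since we merely subdivide edges of a relevant embedding and insert one new vertex inside an open face. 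Hence the counting argument applies verbatim with our fixed $\alpha$ and yields
\[
\frac{|\cB'_n|}{|\cA^g_n|} \;\leqslant\; (1+o(1))\, \alpha^{-1}\, \bigl((c-1)\alpha\bigr)^{-\log n}\, n \;=\; o(1)
\]
as $n \to \infty$, where the final step uses $(c-1)\alpha > e$.

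To conclude, for any $n \in I^*$ with $n \geqslant n_0$, since $\cB_n \setminus \cB'_n \subseteq \cA^g_n \setminus \cL^\alpha_n$,
\[
\pr(R_n \in \cB_n) \;\leqslant\; \frac{|\cB'_n|}{|\cA^g_n|} \,+\, \pr(R_n \notin \cL^\alpha_n) \;\leqslant\; o(1) + \eps/2.
\]
Thus for all sufficiently large $n \in I^*$ we have $p_n = 1 - \pr(R_n \in \cB_n) \geqslant 1 - \eps$. Discarding finitely many small $n$ does not affect lower density, so $\{n : p_n \geqslant 1 - \eps\}$ has lower density at least $1 - \eps/2 \geqslant 1 - \eps$, as required.

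I do not expect any serious obstacle: Lemma~\ref{lemma.pvlowerdensity} was designed to replace leaf-count inputs exactly in this way, and the combinatorial core of the proof of Theorem~\ref{thm.facesize}~(a) is purely local (it invokes neither a growth constant nor connectivity). The only care needed is the two-level bookkeeping: choosing the parameters in Lemma~\ref{lemma.pvlowerdensity} so that both the failure of the leaf event and the double-counting ratio each contribute at most $\eps/2$, and verifying that the resulting index set still has lower density at least $1 - \eps$.
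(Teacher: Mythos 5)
Your proposal is correct and takes essentially the same route as the paper: apply Lemma~\ref{lemma.pvlowerdensity} to obtain a lower-density set $I^*$ on which $R_n$ has at least $\alpha n$ leaves with probability at least $1-\eps/2$, then rerun the leaf-deletion/face-subdivision double-counting argument from the upper bound of Theorem~\ref{thm.facesize}(a) to show $|\cB_n'|/|\cA^g_n|=o(1)$, and combine. The only cosmetic differences are that the paper works with $I^*(g,\eps)$ rather than $I^*(g,\eps/2)$ and closes with a brief remark absorbing the ``$+1$'' (and the factor of 2 between $|E(F)|$ and facial walk length) into the constant $c$, a bookkeeping point you should state explicitly but which causes no difficulty.
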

\begin{proof}\m{do need non-decreasing} Let $0<\eps < 1$.
By Lemma~\ref{lemma.pvlowerdensity}, with $p=1-\tfrac12\eps$, there exists $\alpha > 0$ and $n_0 \in \mathbb{N}$ such that for all $n\geqslant n_0$ with $n\in I=I^*(g,\eps)$, the probability $\mathbb{P}(R_n \in \cL^{\alpha}_n) \geqslant 1-\tfrac12\eps$, where $\cL^{\alpha}_n$ is the set of graphs $G$ on $[n]$ with at least $\alpha n$ leaves. Let the constant $c_0$ satisfy $c_0 > e/\alpha\,+1$.  
Let $\cB$ be the set of graphs $G\in\cA^g$ such that there is a relevant embedding of $G$ which has a face $F$ such that $|E(F)|\geqslant c_0 \log v(G)+1$. Further, let $\cB_n' = \cB_n \cap \cL_n^{\alpha}$. Arguing as in the proof of the upper bound in Theorem~\ref{thm.facesize} (a), we may see that $\mathbb{P}(R_n\in \cB_n') = o(1)$. As a result, for all sufficiently large $n\in I$
\[ \mathbb{P}(R_n \in \cB_n) \leqslant \mathbb{P}(R_n \in \cB_n') + \mathbb{P}(R_n \not\in \cL_n^{\alpha}) \leqslant \tfrac12 \eps + \tfrac12 \eps =   \eps\,.\]
Finally, note that we may replace $c_0 \log v(G) +1$ by $c \log v(G)$ if $c >c_0$.
\end{proof}

\subsection{Maximum degree and face sizes: proofs of lower bounds in Theorems~\ref{thm.Delta2} (a) and~\ref{thm.facesize} (a)}
In this section, we prove the lower bounds on the maximum degree and maximum face size from Theorems~\ref{thm.Delta2} (a) and~\ref{thm.facesize} (a). To do so we first give a preliminary lemma concerning pendant appearances
of graphs with `few' automorphisms.  We say that a set $\cH$ of connected graphs \emph{has at most exponentially many automorphisms} if there is a constant $c$ such that $\aut(H) \leqslant c^{v(H)}$ for each graph $H \in \cH$.
\begin{lemma} \label{thm.fewautoms}
Let the genus function $g$ be $o(n/\log^3 n)$, and let $R_n \inu \cA^g$.
Let $\cH$ be a set of connected planar graphs which have at most exponentially many automorphisms.  Then for each $\eps>0$ there exists $\eta>0$ such that whp $R_n$ has at least $n^{1-\eps}$ pendant appearances of each graph $H \in \cH$ with at most $\eta \log n$ vertices.

In particular, there exists $\eta>0$ such that whp $R_n$ has a pendant appearance of the wheel with at least $\eta \log n$ vertices, and contains a path of length at least $\eta \log n$ in which each vertex has degree 2. 
\end{lemma}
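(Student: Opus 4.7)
The plan is to apply the Pendant Appearances Theorem (Theorem~\ref{thm.app}), via its \wvhp\ consequence equation~(\ref{eqn.penda}), to each $H \in \cH$ with $v(H) \leqslant \eta \log n$, and then take a union bound.  By Theorem~\ref{theorem:gc}(a), since $g(n)$ is $o(n/\log^3 n)$, the class $\cA^g$ has growth constant $\gamma_\cP$ and hence radius of convergence $\rho(\cP)$.  Using the replacement $g \mapsto g^*(n) = \max_{n' \leqslant n} g(n')$ as in the discussion before equation~(\ref{eqn.pend-g}), we may assume without loss of generality that $g$ is non-decreasing, so that every connected planar graph can be attached to $\cA^g$.

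Let $c \geqslant 1$ be a constant witnessing $\aut(H) \leqslant c^{v(H)}$ for all $H \in \cH$, and write $h = v(H)$.  Then
\[ \alpha_H \;=\; \frac{h\,\rho(\cP)^h}{\aut(H)} \;\geqslant\; h\bigl(\rho(\cP)/c\bigr)^h, \]
so for $h \leqslant \eta \log n$,
\[ \alpha_H\, n \;\geqslant\; n\,\bigl(\rho(\cP)/c\bigr)^{\eta \log n} \;=\; n^{1 - \eta \log(c/\rho(\cP))} \;\geqslant\; n^{1-\eps/2} \]
as soon as $\eta < \eps /\bigl(2\log(c/\rho(\cP))\bigr)$.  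By equation~(\ref{eqn.penda}), applied for example with $\tfrac12$ in place of $\eps$, there is $\nu(H) > 0$ such that $\pend(R_n, H) \geqslant \tfrac12 \alpha_H n \geqslant n^{1-\eps}$ except with probability at most $e^{-\nu(H)\, n}$.

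The main obstacle is to make this uniform over all $H \in \cH$ with $v(H) \leqslant \eta \log n$.  The number of isomorphism classes of such $H$ is at most $2^{\binom{\eta \log n}{2}} = \exp\bigl(O(\log^2 n)\bigr)$, so it suffices to know that $\nu(H)$ decays no faster than inverse-polynomially in $n$.  Inspection of the proof of Theorem~\ref{thm.app} shows that $\nu(H)$ may be taken of order $\rho(\cP)^{O(h)}$, hence at least $n^{-C\eta}$ for a constant $C$ when $h \leqslant \eta \log n$, so after further shrinking $\eta$ each failure probability is at most $\exp(-n^{1-C\eta})$ and the union bound is comfortable.  This yields the first assertion.

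For the ``In particular'' statement, take $H$ to be the wheel $W_k$ (a hub adjacent to the vertices of a $(k-1)$-cycle), which is planar with $\aut(W_k) = 2(k-1)$ for $k \geqslant 5$, and the cycle $C_k$, which is planar with $\aut(C_k) = 2k$; both families have at most exponentially many automorphisms.  Applying the first part with $H = W_k$ for $k = \lceil \eta \log n \rceil$ gives whp at least $n^{1-\eps} \geqslant 1$ pendant appearances of $W_k$, hence at least one.  Applied with $H = C_k$ for $k = \lceil \eta \log n \rceil + 2$, any pendant appearance of $C_k$ has exactly one cycle vertex (carrying the link edge) of degree $3$ in $R_n$, with the remaining $k-1$ cycle vertices of degree $2$ and forming a path of length $k-2 \geqslant \eta \log n$, as required.
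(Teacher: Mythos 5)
Your approach differs from the paper's. The paper simply invokes Theorem~3.1 of \cite{MaxDegree}, which is stated \emph{already in the uniform form} over all $H \in \cH$ with $v(H) \leqslant \eta \log n$, and observes that the proof of that theorem carries over from $\cP$ to $\cA^g$ because the only input needed is that $R_n$ has linearly many leaves whp (Theorem~\ref{thm.leaves}(a)); Corollary~3.4 of \cite{MaxDegree} then handles the wheel and the long degree-2 path. You instead try to manufacture the uniformity yourself, starting from the one-graph-at-a-time Pendant Appearances Theorem and taking a union bound over the $\exp\!\bigl(O(\log^2 n)\bigr)$ isomorphism classes of small $H$.

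The gap in your argument is exactly at the sentence ``Inspection of the proof of Theorem~\ref{thm.app} shows that $\nu(H)$ may be taken of order $\rho(\cP)^{O(h)}$.'' Theorem~\ref{thm.app} as stated asserts only the \emph{existence} of some $\nu = \nu(H) > 0$ for each fixed $H$; it gives no quantitative control on how $\nu(H)$ degrades as $v(H)$ grows, and your entire union-bound strategy stands or falls on that unproved assertion. You would need to reopen the proof of the Pendant Appearances Theorem (in \cite{PendAppComp}) and track the dependence of $\nu$ on $h$ through the whole combinatorial construction to justify the claimed $\rho(\cP)^{O(h)}$ rate; you have not done so. Establishing uniformity over a growing family of $H$'s is precisely the nontrivial content that Theorem~3.1 of \cite{MaxDegree} supplies, and that is why the paper cites it rather than Theorem~\ref{thm.app}. (Note that the related Lemma~\ref{lemma.pendcopiesH} in this paper, which is the lower-density analogue and is proved from scratch following \cite{MaxDegree}, obtains only a polynomial, not exponential, failure probability for each $H$ of size $\Theta(\log n)$ — a further hint that the exponential rate $\nu(H)$ requires care.) The ``in particular'' paragraph (wheel $W_k$, cycle $C_k$) is fine and parallels the paper's use of Corollary~3.4 of \cite{MaxDegree}, but it rests on the first assertion.
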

\begin{proof} 
Let $\eps > 0$. Theorem 3.1 of~\cite{MaxDegree} states that for any set $\cH$ of connected planar graphs with at most exponentially many automorphisms, there exists $\eta>0$ such that whp $R_n \in_u \cP$ has at least $n^{1-\eps}$ pendant appearances of each graph $H\in \cH$ with at most $\eta \log n$ vertices. The same proof holds for $R_n \inu \cA^g$ 
as long as whp $R_n$ has a linear number of leaves, so in particular 
when $g(n)$ is $o(n/\log^3 n)$, by Theorem~\ref{thm.leaves} (a). Thus, Corollary 3.4 of~\cite{MaxDegree} also holds for all such classes, from which the lemma follows directly.
\end{proof}
The required lower bounds on the maximum degree and face size now follow in a straightforward manner.
\begin{proof}[Proof of Theorem~\ref{thm.Delta2} (a) and Theorem~\ref{thm.facesize} (a) lower bounds]
The lower bound in Theorem~\ref{thm.Delta2} (a) follows from Lemma~\ref{thm.fewautoms} since if a graph $G$ has as a subgraph a wheel with at least $\eta \log n$ vertices then of course $\Delta(G) \geqslant \eta \log n -1$.
Similarly, the lower bound in Theorem~\ref{thm.facesize} (a) follows from Lemma~\ref{thm.fewautoms}, since if $G$ contains a path of length at least $\eta \log n$ in which each vertex has degree 2 in $G$, then every embedding of $G$ in any surface contains a face with size at least $\eta \log n$ (and indeed with at least this number of distinct vertices in the boundary).
\end{proof}
We have now completed the proofs of Theorem~\ref{thm.Delta2} (a) and Theorem~\ref{thm.facesize} (a).

\subsection{Maximum degree and faces sizes: lower density versions and proofs of Theorems~\ref{thm.Delta2} (b) and~\ref{thm.facesize} (b)} \label{degfacesize_LDversions}
In this section, we prove Theorems~\ref{thm.Delta2}~(b) and~\ref{thm.facesize} (b). We do so by deducing these results from their corresponding lower density versions. The upper bounds in the lower density versions were already proven at the end of Sections~\ref{upperboundmaxdeg} and~\ref{subsec.facesizes}. The lower bounds in the lower density version are proven using the following lower density result corresponding to Lemma~\ref{thm.fewautoms}.
\begin{lemma}\label{lemma.pendcopiesH}
Let $g(n)$ be $O(n/\log n)$ and be non-decreasing, 
and let $R_n \inu \cA^g$.   Let $\cH$ be a set of connected planar graphs with at most exponentially many automorphisms. Let $\eps>0$ and $\delta > 0$. Then there exists $\eta > 0$ such that the set $I$ of integers $n$, such that the probability that $R_n$ has at least $n^{1-\delta}$ pendant appearances of each graph $H\in \cH$ with at most $\eta \log n$ vertices is at least $1-\eps$, has lower density at least $1-\eps$.
\end{lemma}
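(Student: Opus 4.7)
The plan is to adapt the switching argument of \cite[Theorem~3.1]{MaxDegree}, which already underlies the proof of Lemma~\ref{thm.fewautoms}, to the lower density setting. That original argument converts a guarantee that $R_n$ has linearly many leaves into the desired lower bound on pendant copies of $H$; since for $g(n)=O(n/\log n)$ we cannot claim many leaves \whp\ but only on a dense subset of integers, I would replace the \whp\ input by its lower density counterpart from Lemma~\ref{lemma.pvlowerdensity}.

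Concretely, first apply Lemma~\ref{lemma.pvlowerdensity} with $p=1-\eps/2$ and with its $\eps$ replaced by $\eps/2$. This yields a constant $\alpha>0$, an $n_0\in\N$, and a set $I^*=I^*(g,\eps/2)\subseteq\N$ of lower density at least $1-\eps/2$ such that for each $n\geqslant n_0$ with $n\in I^*$ we have $\pr(R_n\in\cL^\alpha_n)\geqslant 1-\eps/2$, where $\cL^\alpha_n$ is the set of $n$-vertex graphs with at least $\alpha n$ leaves. Next, I would run the switching argument of \cite[Theorem~3.1]{MaxDegree}, whose conclusion in our setting is that there is $\eta=\eta(\alpha,\delta,\cH)>0$ such that, for all large $n$, the number of graphs in $\cA^g_n\cap\cL^\alpha_n$ which fail to have at least $n^{1-\delta}$ pendant copies of \emph{some} $H\in\cH$ with $v(H)\leqslant\eta\log n$ is at most $(\eps/2)\,|\cA^g_n|$. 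The switching replaces $v(H)$ leaves of $G$ by a single pendant copy of $H$ (and vice versa) so as to preserve the vertex count, and stays inside $\cA^g_n$ because $H$ is planar, $g$ is non-decreasing, and $\cA^g$ is closed under attaching and detaching planar pendant subgraphs; the hypothesis $\aut(H)\leqslant c^{v(H)}$ controls the multiplicity of the inverse map. Combining the two estimates, for each sufficiently large $n\in I^*$ the event of the lemma holds with probability at least $1-\eps$, so the set $I$ in question contains a tail of $I^*$ and therefore has lower density at least $1-\eps$.

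The main obstacle is to control the switching uniformly over $H\in\cH$ with $v(H)$ as large as $\eta\log n$. Each application of the switching yields an exponential gain of order $\alpha^{v(H)}$ against a cost factor $\aut(H)\leqslant c^{v(H)}$, and one must choose $\eta$ small enough that this beats the $n^{1-\delta}$ target after normalisation by $|\cA^g_n|$; this forces $\eta$ to be of order $\delta/\log(c/\alpha)$. A union bound over $H\in\cH$ with $v(H)\leqslant\eta\log n$ costs only a polynomial factor in $n$, since the number of connected planar graphs on $k$ vertices is $e^{O(k\log k)}$, and is easily absorbed by the exponential switching gain. All remaining steps of \cite[Theorem~3.1]{MaxDegree} carry over verbatim to $\cA^g_n$, since none of them use a growth constant for the ambient class, only its closure properties under attaching and detaching planar pendant subgraphs, which hold for $\cA^g$ whenever $g$ is non-decreasing.
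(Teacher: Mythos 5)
Your plan coincides with the paper's proof: apply Lemma~\ref{lemma.pvlowerdensity} (with $p=1-\eps/2$) to get, on a dense set of integers $n$, that $R_n\in\cL^\alpha_n$ with probability at least $1-\eps/2$; restrict the switching argument of \cite[Theorem~3.1]{MaxDegree} to graphs in $\cA^g_n\cap\cL^\alpha_n$; and union-bound over $H\in\cH$ with $v(H)\leqslant\eta\log n$.

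One quantitative slip worth noting: the number of (unlabelled) connected planar graphs on at most $k$ vertices is $e^{O(k)}$, not $e^{O(k\log k)}$. This matters because the per-$H$ gain from the switching is only polynomial in $n$ (roughly $n^{-\Theta(\delta)}$, once $h\leqslant\eta\log n$ and $\eta$ is small enough that $\alpha^{-h}$, $c^h$ contribute at most $n^{O(\eta)}$), so the union-bound cost must also be polynomial. With the correct bound $e^{O(k)}=n^{O(\eta)}$ this works by taking $\eta$ small; with your stated bound $e^{O(k\log k)}=n^{O(\eta\log\log n)}$ it would not be absorbed for any fixed $\eta$. The paper simply imposes, among its three conditions on $\eta$, that $|\cH^k|\leqslant n^{\delta/5}$. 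Everything else in your outline matches the paper's argument.
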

Part of the following proof is adapted from the proof of Theorem~3.1 in~\cite{MaxDegree}.
\begin{proof}\m{need non-decreasing}
We may assume that $0<\eps <1$.  Recall that $I^*(g,\eps)$ was defined at~(\ref{eqn.slow}), and $\cL^{\alpha}_n$ is the set of graphs $G$ on $[n]$ with at least $\alpha n$ leaves.
By Lemma~\ref{lemma.pvlowerdensity}, with $p=1-\tfrac12\eps$, there exists $\alpha > 0$ and $n_0 \in \mathbb{N}$ such that for all $n\geqslant n_0$ with $n\in I=I^*(g,\eps)$, the probability $\mathbb{P}(R_n \in \cL^{\alpha}_n) \geqslant 1-\tfrac12\eps$.
Let $c>0$ be such that each graph $H\in\cH$ with $h$ vertices has at most $c^h$ automorphisms. Further, let $\cH^j$ denote the set of graphs in $\cH$ with at most $j$ vertices.
Let $\delta > 0$. Let $\eta>0$ be sufficiently small and $n_1\geqslant n_0$ sufficiently large that, if we set $k=k(n)=\lfloor\eta\log n\rfloor$, then for all $n\geqslant n_1$ the following three conditions hold:
\begin{itemize}
\item $|\cH^k|\leqslant n^{\delta/5}$
\item $2\alpha^{-k} \leqslant n^{\delta/5}$
\item $2c^k\leqslant n^{\delta/5}$.
\end{itemize}
Using a double counting argument, we will show that for all sufficiently large $n\in I$ the probability that $R_n$ fails to have at least $n^{1-\delta}$ pendant appearances of each $H\in \cH^k$ is at most $1-\eps$.

Let $n_2>n_1$ be such that for each $n\geqslant n_2$ the following three conditions hold:
\begin{itemize}
    \item $1\leqslant k\leqslant \alpha n$
    \item $(\alpha n)_k\geqslant \tfrac12 (\alpha n)^k$
    \item $(n^{1-\delta}+k)\,c^k\leqslant n^{1-4\delta/5}$.
\end{itemize}
Let $n\geqslant n_2$ and consider any graph $H\in \cH^k$ with $2 \leqslant h\leqslant k$ vertices. Fix a labelled copy of $H$ on vertices $1, \dots, h$ rooted 
at vertex 1.  Following~\cite{MaxDegree} we show the lower bound for rooted pendant appearances, without further comment. Let
\[\cB_n^H = \{G\in\cA^g_n \mid \ell(G)\geqslant \alpha n \mbox{ and } G \mbox{ has at most } n^{1-\delta} \mbox{ pendant appearances of } H\}.\]
We now show that from each graph $G\in\cB^H_n$ we can construct many graphs in $\cA^g_n$ with little double counting, as follows.

Let $G\in\cB^H_n$ and choose an arbitrary ordered list of $h$ pendant vertices $u_1, \dots, u_h$. Delete the edge incident with $u_j$ for each $j=2,\dots,h$ and form a pendant appearance of $H$ rooted at vertex $u_1$ with vertex $u_j$ corresponding to vertex $j$ of $H$ for $j=1,\dots,h$.  From $G$ we make at least $(\alpha n)_h \geqslant \tfrac12 (\alpha n)^h$ constructions, and each graph we construct is in $\cA^g_n$ since $H$ is planar.

How often is each graph constructed? To get back to $G$, we guess the constructed pendant appearance of $H$ -- there are at most $n^{1-\delta}+h$ of them.
\m{should amplify but ..}
Now we know the vertex $u_1$ and the set $\{u_2, \dots, u_h\}$. Since there are at most $c^h$ automorphisms of $H$, there are at most $c^h$ possible choices on the order of $u_2, \dots, u_h$. Finally, guess the vertices that $u_2, \dots, u_h$ were originally attached to -- there are at most $n^{h-1}$ choices. Thus, each graph is constructed at most $(n^{1-\delta}+h)\cdot c^h \cdot n^{h-1} \leqslant n^{h-4\delta/5}$ times.
Therefore
\[ |\cB^H_n| \cdot \tfrac12 (\alpha n)^h \leqslant |\cA^g_n| \cdot n^{h-4\delta/5}.\]
Thus
\[\mathbb{P}(R_n\in \cB_n^H) \leqslant 2n^{h-4\delta/5}(\alpha n)^{-h} \leqslant n^{-3\delta/5}\]
and so
\[\mathbb{P}(R_n\in \bigcup_{H\in\cH^k}\cB_n^H) \leqslant \sum_{H\in\cH^k} \mathbb{P}(R_n\in \cB_n^H) \leqslant n^{-2\delta/5}.\]
Hence
\begin{eqnarray*}
&& \pr(R_n \mbox{ has } < n^{1-\delta} \mbox{ pendant appearances of } H \mbox{ for some } H \in \cH^k)\\
& \leqslant &
\pr(R_n \in \bigcup_{H\in\cH^k}\cB_n^H) + \mathbb{P}(R_n \not\in \cL^\alpha_n)\\
&\leqslant & 
n^{-2\delta/5} + \tfrac12 \eps \;\;
\leqslant \;\; \eps
\end{eqnarray*}
for $n$ sufficiently large. So, for all sufficiently large $n\in I$, 
with probability at least $1-\eps$ the random graph $R_n$ has at least $n^{1-\delta}$ pendant appearances of each $H\in\cH^k$.
\end{proof}
From Lemma~\ref{lemma.pendcopiesH} we can directly deduce that there must be pendant appearances of the wheel with at least $\eta \log n$ vertices and a path of length at least $\eta \log n$ with sufficiently high probability as long as $n\in I$.
\begin{lemma}\label{lemma.degreefacelowerdensity}\m{need non-decreasing}
Let $g(n)=O(n/\log n)$ be non-decreasing and let $R_n \inu \cA^g$.  Let $\eps>0$.
Then there exists $\eta>0$ such that the following holds.
Let $p_n(\mbox{wheel})$ be the probability that $R_n$ has a pendant appearance of the wheel with at least $\eta \log n$ vertices, and let $p_n(\mbox{path})$ be the probability that $R_n$ contains a path of length at least $\eta \log n$ in which each vertex has degree 2. Then the set $I$ of integers $n$ such that $p_n(wheel) \geqslant 1-\eps$ and $p_n(path) \geqslant 1-\eps$ has lower density at least $1-\eps$.
\end{lemma}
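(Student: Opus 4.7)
The plan is to deduce this lemma directly from Lemma~\ref{lemma.pendcopiesH} by choosing an appropriate family $\cH$ containing both wheels and long paths. Take $\cH$ to be the union of all wheels with all paths (on at least two vertices). Every member of $\cH$ is connected and planar, and the automorphism groups are tiny: $\aut(P_m)=2$ for each path, and $\aut(W_j)=2(j-1)$ for the wheel $W_j$ on $j$ vertices (when $j\geqslant 4$), so $\cH$ has at most exponentially many automorphisms and is a valid input to Lemma~\ref{lemma.pendcopiesH}.

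The first step is to apply Lemma~\ref{lemma.pendcopiesH} to this $\cH$ with the given $\eps$ and (say) $\delta=\tfrac12$, obtaining some $\eta_0>0$ and a set $I\subseteq\N$ of lower density at least $1-\eps$ with the property that, for every $n\in I$, with probability at least $1-\eps$ the random graph $R_n$ has at least one pendant appearance of every $H\in\cH$ on at most $\eta_0\log n$ vertices.  I would then set $\eta=\eta_0/3$.

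The wheel half is then essentially immediate: the wheel on $\lceil\eta\log n\rceil$ vertices lies in $\cH$ and has fewer than $\eta_0\log n$ vertices for all large $n$, so any of its pendant appearances witnesses $p_n(\mbox{wheel})\geqslant 1-\eps$ for $n\in I$.  For the path half I would apply the conclusion to $H=P_m$ with $m=\lceil 2\eta\log n\rceil+5$, so that again $v(H)\leqslant\eta_0\log n$ for large $n$.  The key observation is that any pendant appearance of $P_m$ necessarily contains a subpath of length at least $\eta\log n$ whose vertices all have degree $2$ in $R_n$.  To see this, label the vertices of the appearance as $v_1,\ldots,v_m$ along the induced path, and let $v_i$ be the endpoint of the link edge.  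If $i\in\{1,m\}$, then the path $v_1\cdots v_{m-1}$ (or $v_2\cdots v_m$) has length $m-2\geqslant\eta\log n$ and every one of its vertices has degree exactly $2$ in $R_n$; if $1<i<m$, then the two subpaths $v_2\cdots v_{i-1}$ and $v_{i+1}\cdots v_{m-1}$ consist entirely of degree-$2$ vertices of $R_n$, and the longer has length at least $\lceil(m-5)/2\rceil\geqslant\eta\log n$.  This gives $p_n(\mbox{path})\geqslant 1-\eps$ for the same set $I$.

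The only genuine subtlety is the case analysis in the last paragraph, which is needed because the definition of pendant appearance does not constrain where the link edge attaches; the worst case (link edge in the middle of the path) halves the usable degree-$2$ subpath, which is why $m$ must be chosen to be roughly twice $\eta\log n$ rather than matching it.  Everything else is mechanical given Lemma~\ref{lemma.pendcopiesH}.
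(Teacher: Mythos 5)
Your proof is correct and takes essentially the same route as the paper, which (like Lemma~\ref{thm.fewautoms}) deduces the wheel and degree-$2$-path conclusions directly from the pendant-appearances result applied to an appropriate family $\cH$; the paper simply delegates the elementary bookkeeping to a cited corollary from~\cite{MaxDegree}, whereas you spell it out. The only (inconsequential) slip is the claim that $\aut(W_j)=2(j-1)$ for $j\geqslant 4$: this fails for $j=4$ since $W_4\cong K_4$ has $24$ automorphisms, though wheels still satisfy $\aut(W_j)\leqslant c^j$ for a suitable constant $c$, so the family remains valid input to Lemma~\ref{lemma.pendcopiesH}. The careful case analysis of where the link edge attaches to the pendant path, forcing $m\approx 2\eta\log n$ rather than $\eta\log n$, is exactly the point one needs to be precise about.
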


Using this lemma, we can give a lower density version for the lower and upper bounds on the maximum degree.
\begin{lemma}\label{lemma.maxdeglowerdensity}
Let $g(n)=O(n/\log n)$ be non-decreasing and let $R_n\in_u\cA^g$.
Then for any $\eps>0$ there are constants $0< c_1 < c_2$ such that the following holds. For $n \in \N$ let  \[p_n = \pr \left(c_1 \log n \leqslant \Delta(R_n) \leqslant c_2 \log n \right) \,. \] Then the set $I$ of integers $n$ such that $p_n \geqslant 1- \eps$ has lower density at least $1-\eps$.
\end{lemma}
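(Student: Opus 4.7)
The plan is to combine the upper-bound lower-density result, Lemma~\ref{lemma.maxdegLDupperbound}, with the lower-bound consequence of Lemma~\ref{lemma.degreefacelowerdensity} on pendant wheel appearances. Neither direction requires fresh work: the upper bound is already in hand, and for the lower bound a pendant appearance of a wheel on $h$ vertices witnesses a vertex of degree $h-1$ in $R_n$ (namely the hub of the wheel), so once we know such a large wheel appears pendant, we immediately get a logarithmic lower bound on $\Delta(R_n)$.

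More concretely, I would first apply Lemma~\ref{lemma.maxdegLDupperbound} with $\eps$ replaced by $\eps/2$. This yields a constant $c_2>0$ and a set $I_2\subseteq\mathbb{N}$ of lower density at least $1-\eps/2$ such that for every $n\in I_2$,
\[
\pr(\Delta(R_n)\leqslant c_2 \log n) \,\geqslant\, 1-\tfrac12\eps.
\]
Next I would apply Lemma~\ref{lemma.degreefacelowerdensity} (with $\eps$ replaced by $\eps/2$) to obtain some $\eta>0$ and a set $I_1\subseteq\mathbb{N}$ of lower density at least $1-\eps/2$ such that for every $n\in I_1$, with probability at least $1-\tfrac12\eps$ the graph $R_n$ contains a pendant appearance of the wheel on at least $\eta\log n$ vertices. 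Since the hub of such a wheel has degree $\eta\log n -1$ in $R_n$, setting $c_1=\eta/2$ gives $\Delta(R_n)\geqslant c_1\log n$ on this event for all sufficiently large $n$.

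Finally I would take $I = I_1\cap I_2$, intersected with the tail of $\mathbb{N}$ where both of the above hold. By inclusion-exclusion on lower densities,
\[
\liminf_{n\to\infty}\frac{|I\cap[n]|}{n} \,\geqslant\, \bigl(1-\tfrac12\eps\bigr) + \bigl(1-\tfrac12\eps\bigr) - 1 \,=\, 1-\eps,
\]
and for every $n\in I$ a union bound gives
\[
\pr\bigl(c_1\log n\leqslant \Delta(R_n)\leqslant c_2\log n\bigr) \,\geqslant\, 1 - \tfrac12\eps - \tfrac12\eps \,=\, 1-\eps,
\]
which is the desired conclusion.

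There is no real obstacle here; the work has been done in the two preceding lemmas, and this proof is simply a combination step with careful bookkeeping of densities and probability thresholds. The only thing to be mindful of is splitting $\eps$ between the two applications so that the final lower density and final probability both come out at least $1-\eps$, and ensuring $n$ is large enough that $\eta\log n - 1 \geqslant (\eta/2)\log n$, which is immediate.
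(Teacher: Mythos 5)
Your proof is correct and follows essentially the same route as the paper's: apply Lemma~\ref{lemma.maxdegLDupperbound} for the upper bound, apply Lemma~\ref{lemma.degreefacelowerdensity} (via the pendant wheel whose hub witnesses the logarithmic degree) for the lower bound, each with $\eps/2$, and then intersect the two lower-density sets. The paper's write-up is slightly terser about the wheel-to-degree translation and the density arithmetic, but the argument is identical.
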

\begin{proof}\m{need non-decreasing}
We prove this lemma in two parts. 
Consider first the lower bound. By Lemma~\ref{lemma.degreefacelowerdensity} for the wheel, there exists $c_1>0$ such that, if $I_1$ is the set of $n \in \N$ with $\pr(\Delta(R_n) \geqslant c_1 \log n) \geqslant 1-\tfrac12 \eps$, then $I_1$ has lower density at least $1-\tfrac12 \eps$.
For the upper bound, by Lemma~\ref{lemma.maxdegLDupperbound}, there exists $c_2>0$ such that, if $I_2$ is the set of $n \in \N$ with $\pr(\Delta(R_n) \leqslant c_2 \log n) \geqslant 1 - \tfrac12 \eps$, then $I_2$ has lower density at least $1- \tfrac12 \eps$.
But now the set $I = I_1 \cap I_2$ of integers has lower density at least $1-\eps$, and $\pr \left(c_1 \log n \leqslant \Delta(R_n) \leqslant c_2 \log n \right) \geqslant 1-\eps$ for each $n \in I$.
\end{proof}
\begin{proof}[Proof of Theorem~\ref{thm.Delta2} (b)]\m{need non-decreasing}
This follows directly from the lower density version Lemma~\ref{lemma.maxdeglowerdensity}.
\end{proof}

Similarly, we give a lower density version for the lower and upper bounds on the maximum face size.
\begin{lemma} \label{facesize_lowerdensity}
Let $g(n) = O(n/\log n)$ be non-decreasing and let $R_n\in_u \cA^g$. Then for any $\eps>0$ there are constants $0<c_1<c_2$ such that the following holds. For $n\in \mathbb{N}$ let $p_n$ be the probability that the maximum face size in a relevant embedding is at least $c_1\log n$ and at most $c_2 \log n$.
Then the set $I$ of integers $n$ such that $p_n \geqslant 1-\eps$ has lower density at least $1-\eps$.
\end{lemma}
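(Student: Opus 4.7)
The plan is to mirror the proof of Lemma~\ref{lemma.maxdeglowerdensity} exactly, combining a lower density lower bound and a lower density upper bound on the maximum face size, each holding with probability at least $1-\tfrac12 \eps$ on a subset of $\mathbb{N}$ of lower density at least $1-\tfrac12 \eps$. Since the intersection of two sets of integers of lower density at least $1-\tfrac12\eps$ has lower density at least $1-\eps$, taking the intersection yields the claim.

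For the upper bound on the maximum face size, I would invoke Lemma~\ref{lemma.facesizeLDupper} directly (with parameter $\tfrac12 \eps$): this produces a constant $c_2>0$ and a set $I_2$ of lower density at least $1-\tfrac12 \eps$ such that for every $n \in I_2$, the probability that every relevant embedding of $R_n$ has maximum face size at most $c_2 \log n$ is at least $1-\tfrac12 \eps$.

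For the lower bound, I would use Lemma~\ref{lemma.degreefacelowerdensity} (the path half). That lemma, applied with $\tfrac12 \eps$, yields an $\eta>0$ and a set $I_1$ of lower density at least $1-\tfrac12 \eps$ such that for every $n\in I_1$, with probability at least $1-\tfrac12\eps$ the graph $R_n$ contains a path of length at least $\eta \log n$ in which every internal vertex has degree $2$ in $R_n$. Just as in the proof of the lower bound in Theorem~\ref{thm.facesize}~(a), such a path forces every embedding of $R_n$ in any surface to contain a face whose facial walk traverses every edge of the path on both sides, giving a face of size at least $\eta \log n$. Setting $c_1 = \eta$ then gives the desired lower bound whenever $n \in I_1$.

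No step here is a serious obstacle because both ingredients have been established: the upper bound side is already packaged as Lemma~\ref{lemma.facesizeLDupper}, and the lower bound side is a direct corollary of Lemma~\ref{lemma.degreefacelowerdensity} via the same topological observation used in the absolute (non-density) version. The only mild care needed is to track the $\tfrac12\eps$ budgets so that both the probability bound and the lower density bound come out to $1-\eps$ after intersecting $I_1$ and $I_2$; one then sets $I = I_1 \cap I_2$ and reads off the combined event on $I$.
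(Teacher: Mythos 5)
Your proposal is essentially identical to the paper's proof: the paper also mirrors the argument for Lemma~\ref{lemma.maxdeglowerdensity}, invoking Lemma~\ref{lemma.facesizeLDupper} for the upper bound and Lemma~\ref{lemma.degreefacelowerdensity} (the path of degree-two vertices) for the lower bound, each with error budget $\tfrac12\eps$, and then intersecting the two sets $I_1$ and $I_2$ and applying a union bound on the two events. Nothing to add.
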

\begin{proof}\m{need non-decreasing}
As for Lemma~\ref{lemma.maxdeglowerdensity}, we prove this lemma in two parts, starting with the lower bound. As we noted before, 
if $G$ has a path of at least $k$ degree-two vertices then every embedding of $G$ has a face of size at least $k$.
Thus by Lemma~\ref{lemma.degreefacelowerdensity} (for the path of degree two vertices) there exists $c_1>0$ such that, if $F_n$ is the event that every embedding of $R_n$ has a face of size at least $c_1 \log n$, and $I_1$ is the set of $n \in \N$ with $\pr(F_n) \geqslant 1-\tfrac12 \eps$, then $I_1$ has lower density at least $1-\tfrac12 \eps$.
For the upper bound, by
Lemma~\ref{lemma.facesizeLDupper}, there exists $c_2>0$ such that, if $F'_n$ is the event that in every relevant embedding of $R_n$ every face has length at most $c_2 \log n$, and $I_2$ is the set of $n \in \N$ with $\pr(F'_n) \geqslant 1-\tfrac12 \eps$, then $I_2$ has lower density at least $1-\tfrac12 \eps$.
But now the set $I = I_1 \cap I_2$ of integers has lower density at least $1-\eps$, and $\pr (F_n \cap F'_n) \geqslant 1-\eps$ for each $n \in I$.
\end{proof}
\begin{proof}[Proof of Theorem~\ref{thm.facesize} (b)]\m{need non-decreasing}
This follows directly from the lower density version Lemma~\ref{facesize_lowerdensity}.
\end{proof}

We have seen that when $g(n) = o(n/\log^3n)$,
for $R_n \inu \cA^g$ the maximum degree and the maximum face size of a relevant embedding are both $\Theta(\log n)$ whp. Further, when $g(n) = o(n/\log n)$, for both the maximum degree and the maximum face size,
the limsup of the probability that it is $\Theta(\log n)$ is still near 1.
By Theorem~\ref{thm.edges}, assuming $g(n)=o(n^2)$, whp $R_n$ has average degree at least $1+ g/n$; and thus if $g(n) \gg n \log n$ we have $\Delta(R_n) \gg \log n$ whp.
We conjecture that, for $g(n)=O(n^2)$, the maximum degree is 
$\Theta(\log n + g/n)$ whp.
\m{what about max face size for large $g$?}

\section{Planarity of the fragment of $R_n\inu\cA^g$, proof of Theorem~\ref{thm.Fragplanar}}
\label{sec.frag}
In this section we prove Theorem~\ref{thm.Fragplanar} on the planarity of the fragment (the subgraph induced on the vertices not in the largest component) of $R_n$.
We use three preliminary lemmas.  For any connected graph $K$, let $\comp(K)$ be the class of graphs with a component isomorphic to $K$.  We first consider the orientable case.

\begin{lemma} \label{lem.ori}
Let $g=g(n) \geqslant 0$ satisfy $g \ll n^2$, and let $K$ be a (fixed) $k$-vertex nonplanar connected graph. Then 
 \[ \max_{0 \leqslant h \leqslant g(n)} \{ | \comp(K) \cap \cO\cE^h_n| / |\cO\cE^h_n| \} \rightarrow 0 \;\;\; \mbox{ as } \; n \to \infty\,. \]
\end{lemma}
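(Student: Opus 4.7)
The plan is to exploit the fact that in the orientable setting a non-planar component costs at least $2$ units of Euler genus, so peeling off one copy of $K$ drops us into the much smaller class $\cO\cE^{h-2}$. The gain in class size from~(\ref{eqn.lbasymp}) (roughly a factor of $n^2/(n+h)$) will comfortably outweigh the combinatorial cost of choosing and labelling a copy of $K$, which is only polynomial in $n$.

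The cases $h\in\{0,1\}$ are vacuous: the only orientable surface of Euler genus at most $1$ is the sphere, so $\cO\cE^h = \cP$, and hence $\comp(K)\cap\cO\cE^h_n=\emptyset$ because $K$ is non-planar. Assume $h\geqslant 2$. Given $G\in\comp(K)\cap\cO\cE^h_n$, I would distinguish one component $C\cong K$ canonically (say, the one with lex-smallest vertex set). Because the minimum orientable Euler genus of a graph is additive over its connected components (a standard fact; see e.g.~\cite{GraphsonSurfaces}) and $K$ contributes at least $2$, the subgraph $G\setminus V(C)$ lies in $\cO\cE^{h-2}_{n-k}$ (relative to its vertex set). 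Choosing the $k$-element vertex set of $C$ and then placing a labelled copy of $K$ on it in $k!/\aut(K)$ ways gives
\[
|\comp(K)\cap\cO\cE^h_n| \;\leqslant\; \binom{n}{k}\,\frac{k!}{\aut(K)}\,\bigl|\cO\cE^{h-2}_{n-k}\bigr|.
\]

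Next I would compare $|\cO\cE^{h-2}_{n-k}|$ to $|\cO\cE^h_n|$ in two steps using the growth bounds of Section~\ref{subsec.struc}. Since $h\leqslant g(n)\ll n^2$, equation~(\ref{eqn.lbasymp}) applied with $h$ replaced by $h-2$ yields $|\cO\cE^{h-2}_n|\leqslant \tfrac{7(n+h-2)}{n^2}\,|\cO\cE^h_n|$; and iterating~(\ref{eqn.growth}) $k$ times gives $|\cO\cE^{h-2}_{n-k}|\leqslant |\cO\cE^{h-2}_n|\big/\bigl(2^k(n-1)(n-2)\cdots(n-k)\bigr)$. After multiplying through, the falling factorial $(n)_k$ in the combinatorial prefactor almost cancels $(n-1)(n-2)\cdots(n-k)$ in the denominator (their ratio is $n/(n-k)$), leaving
\[
\frac{|\comp(K)\cap\cO\cE^h_n|}{|\cO\cE^h_n|} \;\leqslant\; \frac{7\,n\,(n+h-2)}{\aut(K)\cdot 2^k\cdot (n-k)\cdot n^2} \;=\; O\!\left(\frac{n+g(n)}{n^2}\right),
\]
with implicit constant depending only on $K$, uniformly in $0\leqslant h\leqslant g(n)$. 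Since $g(n)=o(n^2)$, this bound is $o(1)$, as required.

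The proof is largely mechanical once the additivity of orientable Euler genus over components is in hand; no step poses real difficulty. What is delicate is that the whole strategy rests on orientability: the implication \emph{non-planar implies Euler genus $\geqslant 2$} is what allows the two-unit drop needed to invoke~(\ref{eqn.lbasymp}) profitably, and this fails in the non-orientable setting, where the projective plane gives non-planar components of Euler genus~$1$. I would expect the non-orientable analogue of this lemma to require a somewhat different treatment.
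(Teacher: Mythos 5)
Your proof is correct and takes essentially the same approach as the paper's: detach a distinguished nonplanar component isomorphic to $K$ (using additivity of Euler genus over components, so the remainder lies in $\cO\cE^{h-2}_{n-k}$), then compare class sizes via~(\ref{eqn.lbasymp}) and iterated~(\ref{eqn.growth}) to get a uniform $O((n+g)/n^2)=o(1)$ bound. The differences (explicit treatment of $h\leqslant 1$, retaining the $1/\aut(K)$ factor, and keeping the $n/(n-k)$ ratio rather than absorbing it via $2^k(n-1)_k\geqslant (n)_k$) are cosmetic.
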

\begin{proof}
Suppose that $K$ has $k$ vertices.  For each $h \geqslant 2$
\[ | \comp(K) \cap \cO\cE^h_n | \leqslant (n)_k \, |\cO\cE_{n-k}^{h-2}| .\]
By equations~(\ref{eqn.lbasymp}) and~(\ref{eqn.growth}), there is an $n_0$ such that, for each $n \geqslant n_0$ and $2 \leqslant h \leqslant g(n)$,
\[ |\cO\cE_n^{h}| \geqslant \frac{n^2}{7(n\!+\!h)}\, |\cO\cE_n^{h-2}| \geqslant \frac{n^2}{7(n\!+\!g)}\, 2^k(n\!-\!1)_{k} \, |\cO\cE_{n-k}^{h-2}| \geqslant \frac{n^2}{7(n\!+\!g)}\, (n)_{k} \, |\cO\cE_{n-k}^{h-2}| . \]
Hence, for each $n \geqslant n_0$ and $2 \leqslant h \leqslant g(n)$, \[ |\comp(K) \cap \cO\cE^h_n | / |\cO\cE_{n}^{h}| \leqslant (n)_k \, |\cO\cE_{n-k}^{h-2}| / |\cO\cE_{n}^{h}| \leqslant \frac{7(n+g)}{n^2}\,  = o(1)\,, \]
which completes the proof.
\end{proof}

For the non-orientable case, we first need to consider pendant appearances of a nonplanar connected graph $K$. By the additivity of Euler genus on blocks, for each $h \geqslant 0$, each graph $G \in \cN\cE_n^h$ has at most $h$ disjoint pendant appearances of $K$. We need a better bound for random graphs in $\cN\cE_n^h$.

\begin{lemma} \label{lem.pendH}
For each $n \geqslant 1$ and $h \geqslant 0$ let $R_n^h \inu \cN\cE_n^h$. Let $g=g(n)$ satisfy $0 \leqslant g(n) \ll n^2$. Let $t=t(n)$ satisfy $t^2 \gg n+g$ and $t \ll n$.  Let $K$ be a $k$-vertex nonplanar connected graph.  Then 
\begin{equation} \label{eqn.pend} \nonumber
  \max_{0 \leqslant h \leqslant g(n)} \pr(\pend(R_n^h,K) \geqslant t)
  \rightarrow 0 \;\;\; \mbox{ as } n \to \infty.
\end{equation}
\end{lemma}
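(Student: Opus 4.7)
The approach is a pair-counting argument exploiting block additivity of non-orientable Euler genus.  Let $Y(G)$ denote the number of ordered pairs $(W_1,W_2)$ of vertex-disjoint pendant appearance sets of $K$ in $G$; since $\tilde{\gamma}(K)\geq 1$ ($K$ being nonplanar), two vertex-disjoint pendant appearances of $K$ contribute at least $2$ to $\tilde{\gamma}(G)$ by additivity on blocks, allowing the two-step growth ratio~(\ref{eqn.lbasymp}) to be applied.  The plan is to bound $\E[Y(R_n^h)]$ uniformly in $h \leq g(n)$, relate $Y$ to $\pend(\cdot,K)^2$, and conclude by Markov's inequality.

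For the first moment bound, given a specific ordered disjoint pair $(W_1,W_2)$ of $k$-subsets of $[n]$, the number of $G \in \cN\cE_n^h$ in which both are pendant appearances of $K$ is at most $(k!/\aut(K))^2 \cdot (k(n-k))^2 \cdot |\cN\cE_{n-2k}^{h-2}|$, obtained by choosing a labelled copy of $K$ on each $W_i$, a link edge on each, and an arbitrary graph in $\cN\cE_{n-2k}^{h-2}$ on $V(G)\setminus(W_1\cup W_2)$ (block additivity caps the Euler genus of the rest).  I bound $|\cN\cE_n^h|$ below by applying~(\ref{eqn.lbasymp}) once at $n$ vertices to move from $h-2$ to $h$ (gaining a factor $n^2/(7(n+h-2))$), then iterating~(\ref{eqn.growth}) $2k$ times to move from $n-2k$ to $n$ vertices (gaining a further factor $2^{2k}(n-2k)^{2k}$).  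Summing over the at most $\binom{n}{k}\binom{n-k}{k}$ ordered disjoint pairs and cancelling common factors of $n^{2k}$, I obtain $\E[Y(R_n^h)] \leq C(K)(n+h)$ uniformly in $h \leq g(n)$, for a constant $C(K)>0$.

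The crucial structural step is to exhibit a constant $c=c(K)>0$ such that $Y(G) \geq c\,\pend(G,K)^2$ for every graph $G$ with $\pend(G,K)$ sufficiently large.  When $K$ is $2$-connected, any two distinct pendant appearances of $K$ must be vertex-disjoint: if $W_1\neq W_2$ overlapped, then in the $K$-copy on $W_1$ the edge cut between the nonempty proper parts $W_1\cap W_2$ and $W_1\setminus W_2$ would contain at least two edges (a $2$-connected graph is $2$-edge-connected), and each such edge, with one endpoint in $W_1\cap W_2\subseteq W_2$ and the other in $W_1\setminus W_2\subseteq V(G)\setminus W_2$, would count as an outgoing edge of $W_2$, contradicting $W_2$ having exactly one such outgoing edge.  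In this case $Y(G)=\pend(G,K)(\pend(G,K)-1)$.  For a general connected nonplanar $K$, I reduce to a $2$-connected nonplanar block $B$ of $K$: each pendant appearance of $K$ determines an appearance of $B$ whose number of outgoing edges is bounded in terms of $K$ alone, and the same vertex-disjointness-style argument delivers the claim with $c=c(K)$.

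Combining the two steps, Markov's inequality yields, uniformly in $h \leq g(n)$,
\[\pr\bigl(\pend(R_n^h,K)\geq t\bigr) \;\leq\; \pr\bigl(Y(R_n^h) \geq c\,t^2\bigr) \;\leq\; \frac{C(K)\,(n+h)}{c\,t^2} \;=\; O\!\left(\tfrac{n+g}{t^2}\right) \;=\; o(1),\]
using the hypothesis $t^2 \gg n+g$.  The main obstacle will be the block reduction in the structural claim for non-$2$-connected nonplanar $K$: it requires careful bookkeeping of cut vertices of $K$ and how they transmit outgoing edges in the pendant-appearance setting, but the core vertex-disjointness intuition from the $2$-connected case should carry through up to constants depending only on $K$.
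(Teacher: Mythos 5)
Your proposal takes a genuinely different route from the paper. The paper does not stop at pairs: it passes from $\pend(G,K)\geqslant t$ to a collection of roughly $u\approx t/k\to\infty$ vertex-disjoint pendant appearances (using that $u\gg\sqrt{n+g}$), excises all $u$ of them at once, and compares the resulting count to $|\cN\cE_n^h|$ via iterated applications of~(\ref{eqn.lbasymp}) and~(\ref{eqn.growth}); this buys an exponentially small bound $2^{-t}$, uniformly in $h$. Your approach counts ordered pairs $Y(G)$ of vertex-disjoint pendant appearances and applies Markov, yielding only $O((n+g)/t^2)=o(1)$; that is weaker, but it is all the lemma requires, and the first-moment computation you sketch is correct. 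In particular it is valid to land the remainder in $\cN\cE_{n-2k}^{h-2}$ rather than $\cE_{n-2k}^{h-2}$ as the paper does, since the non-orientable Euler genus is additive over blocks and each pendant copy of the nonplanar $K$ is a union of blocks, cut off by a bridge, contributing at least one to the total; the cancellation of the falling-factorial and $n^{2k}$ factors then gives $\E[Y(R_n^h)]\leqslant C(K)(n+h)$ uniformly for $0\leqslant h\leqslant g(n)$, exactly as claimed.

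The genuine gap is in your structural claim $Y(G)\geqslant c\,\pend(G,K)^2$ for general connected nonplanar $K$. The sketched reduction to a $2$-connected nonplanar block $B$ does not work: the vertex set of a $B$-copy inside a pendant appearance of $K$ has, as you yourself note, up to $e(K)+1$ outgoing edges in $G$, not one. The $2$-edge-connectivity of $B$ then only tells you that two overlapping $B$-copies force $\geqslant 2$ outgoing edges across the overlap, which is not a contradiction, so ``the same vertex-disjointness-style argument'' does not carry over. What saves the claim is a different structural observation about the \emph{full} pendant appearance $W$ and its unique link edge $v_1w_1$ (with $v_1\in W$, $w_1\notin W$): if $W_2\neq W_1$ is another pendant appearance meeting $W_1$, then (i) $w_1\in W_2\setminus W_1$ and the link edge of $W_2$ has its outer endpoint in $W_1\setminus W_2$; (ii) $W_2\setminus W_1$ has only the single edge $v_1w_1$ to the rest of $G$, so $W_2\setminus W_1$ equals the connected component of $G-v_1$ containing $w_1$, a fixed set $C$ of size $<k$; and (iii) $W_2\cap W_1$ is a subset of $W_1$ containing $v_1$ with exactly one edge to its complement inside $G[W_1]\cong K$, i.e.\ it is the $v_1$-side of a bridge of $K$. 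Since $K$ has fewer than $k$ bridges, at most $k-1$ pendant appearances $W_2\neq W_1$ can meet $W_1$, hence $Y(G)\geqslant\pend(G,K)\,(\pend(G,K)-k)\geqslant\tfrac12\pend(G,K)^2$ once $\pend(G,K)\geqslant 2k$. (Incidentally this is also the combinatorial fact the paper invokes, without proof, when it asserts that $\pend(G,K)\geqslant t$ yields roughly $t/k$ disjoint pendant appearances.) With this replacement for the block-reduction step, your proof closes, but as written it does not.
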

\begin{proof}
Let $u = u(t)$ be either $\lceil t/k \rceil$ or $\lceil t/k \rceil -1$, whichever is even.  If $\pend(G,K) \geqslant t$ then $G$ has at least $u$ (vertex) disjoint pendant appearances of $K$. Let $n_0 \geqslant 2$ be sufficiently large that $u < n/k$ for all $n \geqslant n_0$.
For each $n \geqslant n_0$ and $t \leqslant h \leqslant g(n)$
\begin{eqnarray*}
&&|\{ G \in \cN\cE_n^{h}: \pend(G,K) \geqslant t \}|\\
& \leqslant &
|\{ G \in \cN\cE_n^{h}: G \mbox{ has} \geqslant u \mbox{ disjoint pendant appearances of } K) \}|\\
& \leqslant &
\frac{(n)_{u k}}{u!\, (\aut\, K)^u} (k n)^u \, |\cE_{n-u k}^{h-u}| \;\; \leqslant \;\; \frac{(n)_{u k} (k n)^u}{u!}  \, |\cE_{n-u k}^{h-u}|\,,
\end{eqnarray*}
since $\aut\, K \geqslant 1$.
Also, by equation~(\ref{eqn.lbasymp}) there is an $n_1 \geqslant n_0$ such that for all $n \geqslant n_1$ and $2 \leqslant h \leqslant g(n)$ we have
\[|\cN\cE_{n}^{h}| \geqslant \frac{n^2}{7(n+h)} \, |\cE_{n}^{h-2}|\, . \]
Using also equation~(\ref{eqn.growth}), for all $n \geqslant n_1$ and $t \leqslant h \leqslant g(n)$ we have
\[ |\cN\cE_n^{h}| \geqslant \left(\frac{n^2}{7(n+h)} \right)^{u/2} |\cE_n^{h-u}|\geqslant \left(\frac{n^2}{7(n+g)} \right)^{u/2} \, 2^{uk}(n-1)_{uk} \,  |\cE_{n-uk}^{h-u}|
\geqslant \left(\frac{n^2}{7(n+g)} \right)^{u/2} \, (n)_{uk} \,  |\cE_{n-uk}^{h-u}|. \]
Hence, for all $n \geqslant n_1$ and $t \leqslant h \leqslant g(n)$ (noting that $u! \geqslant (u/e)^u$)
\begin{eqnarray*}
\pr(\pend(R_n^h,K) \geqslant t)
&=&
|\{ G \in \cN\cE_n^{h}: \pend(G,K) \geqslant t \}|\, / \, |\cN\cE_n^{h}| \\
& \leqslant &
\frac{(n)_{u k} (k n)^u }{u!} 
\left(\frac{7(n+g)}{n^2} \right)^{u/2} \frac1{ (n)_{uk}} \\
&=&
\frac{k^u \, (7(n+g))^{u/2}}{u!}\\
& \leqslant &
\left( \frac{\sqrt{n+g}}{u} \;
\sqrt{7}\, k e \right)^{u} \;\; \leqslant \;\; 2^{-t}
\end{eqnarray*}
for $n$ sufficiently large, since $u \gg \sqrt{n+g}$.
Hence $\pr(\pend(R_n^h,K) \geqslant t) \leqslant 2^{-t}$ for $n$ sufficiently large, which completes the proof.
\end{proof}
We now give a lemma for the nonorientable case corresponding to Lemma~\ref{lem.ori} (which handles the orientable case).

\begin{lemma} \label{lem.nonori}
Let $g=g(n) \geqslant 0$ satisfy $g(n) \ll n^2$, and let $K$ be a $k$-vertex nonplanar connected graph. Then
\begin{equation} \label{eqn.Knonori} \nonumber
 \max_{0 \leqslant h \leqslant g(n)} \{ | \comp(K) \cap \cN\cE^h_n| / |\cN\cE^h_n| \} \rightarrow 0 \;\;\; \mbox{ as } n \to \infty\,.
\end{equation}
\end{lemma}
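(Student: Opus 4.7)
The plan is to follow the spirit of Lemma~\ref{lem.ori}, but because non-orientable Euler genus fails to be additive over components (whereas orientable Euler genus is additive), the direct counting that would bound $|\comp(K) \cap \cN\cE^h_n|$ by $(n)_k \, |\cN\cE^{h-2}_{n-k}|$ is unavailable. Instead I will combine bridge-addability of $\cN\cE^h$ with a double-counting argument to reduce matters to a bound on pendant appearances of $K$, at which point Lemma~\ref{lem.pendH} finishes the job.

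First, given $G \in \comp(K) \cap \cN\cE^h_n$, I will choose a component $K_1$ of $G$ isomorphic to $K$, a vertex $u \in V(K_1)$ and a vertex $v \in V(G) \setminus V(K_1)$; since $\cN\cE^h$ is bridge-addable, the graph $G' := G + uv$ lies in $\cN\cE^h_n$, and $V(K_1)$ is a pendant appearance of $K$ in $G'$ with link edge $uv$. Each $G$ yields at least $k(n-k)$ such tuples $(G,K_1,u,v)$, while each $G' \in \cN\cE^h_n$ is hit by exactly $\pend(G',K)$ of them (the tuple is recovered by naming the pendant appearance and its link edge). This gives the inequality
\[ |\comp(K) \cap \cN\cE^h_n| \cdot k(n-k) \;\leqslant\; \sum_{G' \in \cN\cE^h_n} \pend(G',K) \;=\; \E[\pend(R_n^h,K)] \cdot |\cN\cE^h_n|. \]

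Next, I will pick $t=t(n)$ with $t \gg \sqrt{n+g}$ and $t = o(n)$, which is possible precisely because $g(n) \ll n^2$ (a concrete choice is $t = n^{1/2}(n+g)^{1/4}$). The proof of Lemma~\ref{lem.pendH} actually delivers the exponential tail bound $\pr(\pend(R_n^h,K) \geqslant t) \leqslant 2^{-t}$ uniformly in $h \in \{0,\ldots,g(n)\}$ for $n$ sufficiently large (when $h < t$ this is trivial, as each pendant appearance of a nonplanar $K$ consumes at least one unit of Euler genus). Combined with the crude upper bound $\pend(G',K) \leqslant 2\,e(G') = O(n+g)$ coming from Euler's formula~(\ref{maxedges}), this gives $\E[\pend(R_n^h,K)] \leqslant t + O(n+g)\cdot 2^{-t} = t + o(1)$ uniformly in $h$ (using that $t \geqslant \sqrt{n} \gg \log n$, so $(n+g)\,2^{-t} \to 0$ even for the largest admissible $g$). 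Dividing the displayed inequality by $|\cN\cE^h_n|\cdot k(n-k)$ then yields
\[ \frac{|\comp(K) \cap \cN\cE^h_n|}{|\cN\cE^h_n|} \;\leqslant\; \frac{t + o(1)}{k(n-k)} \;\longrightarrow\; 0 \]
uniformly in $h \leqslant g(n)$, exactly as required.

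The main obstacle, and the reason the argument of Lemma~\ref{lem.ori} does not transfer directly, is the non-additivity of non-orientable Euler genus over components: if $G = K_1 \sqcup G' \in \cN\cE^h_n$ with $K$ non-planar, the most one can conclude about the remainder is $\eta(G') \leqslant h-1$ (so $G' \in \cE^{h-1}_{n-k}$), rather than $G' \in \cN\cE^{h-2}_{n-k}$ as in the orientable case. This prevents a direct pairing with the growth inequality~(\ref{eqn.lbasymp}) for $\cN\cE$. Routing through pendant appearances via bridge-addability, and then invoking the exponential tail bound hidden in the proof of Lemma~\ref{lem.pendH}, is what sidesteps this obstruction.
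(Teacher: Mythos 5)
Your argument is correct and follows essentially the same route as the paper's proof: both use bridge-addability of $\cN\cE^h$ to turn a component isomorphic to $K$ into a pendant appearance in $k(n-k)$ ways, and both control the overcounting via Lemma~\ref{lem.pendH}. The bookkeeping differs slightly, and the difference has a small cost. The paper first restricts to the set $\cD_n^h$ of graphs with at most $t$ pendant appearances of $K$ (losing only an $o(1)$ fraction by the \emph{statement} of Lemma~\ref{lem.pendH}) and then does the double count there, so each constructed $G'$ is reached at most $t+k$ times; this makes the qualitative conclusion of Lemma~\ref{lem.pendH} sufficient. You instead double count over all of $\comp(K)\cap\cN\cE^h_n$ and bound $\E[\pend(R^h_n,K)]$, which forces you to extract the quantitative tail $\pr(\pend(R^h_n,K)\geqslant t)\leqslant 2^{-t}$ from inside the proof of Lemma~\ref{lem.pendH} rather than from its statement — a legitimate move, and you flag it, but it is an extra dependency. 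One small imprecision in that extraction: for $h<t$ the event $\pend(R^h_n,K)\geqslant t$ is \emph{trivially} impossible only when $h$ is below roughly $t/k$ (the number of \emph{disjoint} pendant appearances guaranteed by $\pend\geqslant t$, which is what consumes Euler genus), not for all $h<t$; for $t/k\lesssim h<t$ you still need the computation inside Lemma~\ref{lem.pendH}, which in fact does go through in that range, so your conclusion stands but the justification as written is slightly off.
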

\begin{proof}
As in Lemma~\ref{lem.pendH}, let $t=t(n)$ 
satisfy $t^2 \gg n+g(n)$ and $t \ll n$.  Let $\cD_n^h$ be the set of graphs $G \in \cN\cE_n^{h}$ with $\pend(G,K) \leqslant t$. By Lemma~\ref{lem.pendH}, if we let
\begin{equation} \label{eqn.cD}
 \eps(n) = 1-\min_{0 \le h \le g(n)} |\cD_n^h| / |\cN\cE_n^h| 
\end{equation}
then $\eps(n)=o(1)$.
From each graph $G \in \comp(K) \cap \cD_n^h$ we may construct at least $k(n-k)$ graphs $G' \in \cN\cE_n^h$ by adding an edge between a copy of $K$ and a vertex in the rest of the graph.
For each graph $G'$ constructed we have $\pend(G',K) \leqslant t+k$; and so $G'$ can be constructed at most $t+k$ times. Hence
\[ |\comp(K) \cap \cD_n^h| \cdot k(n-k) \leqslant |\cN\cE_n^h| \cdot (t+k) .\]
Thus, recalling the definition~(\ref{eqn.cD}),
\[ \frac{|\comp(K) \cap \cN\cE_n^h|}{|\cN\cE_n^h|} \leqslant \frac{t+k}{k(n-k)} + \eps(n) = o(1) \, ;\]
and the lemma follows.
\end{proof}

We may now use Lemmas~\ref{lem.ori} and~\ref{lem.nonori} to complete the proof of Theorem~\ref{thm.Fragplanar}.

\begin{proof}[Proof of Theorem~\ref{thm.Fragplanar}]
Let $g(n) = n \log n$. (Any $g$ such that $n \ll g(n) \ll n^2$ would do.)  By part (c) of Theorem~\ref{thm.conn}, since $g \gg n$,
\begin{equation} \label{eqn.disc}
 \max_{S:\, eg(S)\ge g(n)} \pr( R^S_n \mbox{ is disconnected}) \; = \; o(1).
\end{equation}
Note that Theorem~\ref{thm.conn} has not been proven yet, but the proof of part (c) of this result does not use Theorem~\ref{thm.Fragplanar}. Now fix $\eps_0>0$.  By~(\ref{eqn.disc}) it suffices to show that, for $n$ sufficiently large,
\begin{equation} \label{eqn.egsmall}
 \max_{S :\, eg(S) \le g(n)} \pr(\Frag(R^S_n) \mbox{ is nonplanar}) \; \le \; \eps_0 \, .
\end{equation}
Fix $\ell = \lceil 4/\eps_0 \rceil$. For each $n \geqslant 1$ and each surface $S$, since $\E[\frag(R_n^S)] < 2$ (see Lemma \ref{lem.frag}), 
we have $\pr(\frag(R_n^S) \geqslant \ell) \leqslant \eps_0/2$. 
Let $H_1,H_2,\ldots,H_j$ list all the unlabelled nonplanar connected graphs on at most $\ell -1$ vertices.
By Lemmas~\ref{lem.ori} and~\ref{lem.nonori}, for each $i=1, \ldots,j$ there exists $\eps_i(n)=o(1)$ such that for each $n \geqslant 1$
\[ \max_{S:\, eg(S) \leqslant g(n)} \pr(R_n^S \mbox{ has a component isomorphic to } H_i) \leqslant \eps_i(n). \]
Thus, for each $n \geqslant 1$ 
\begin{eqnarray*}
&& \max_{S:\, eg(S) \leqslant g(n)} \pr(\Frag(R_n^S)\mbox{ is nonplanar}) \\
&\leqslant &
  \max_{S:\, eg(S) \leqslant g(n)} \left( \pr(\frag(R_n^S) \geqslant \ell) + \sum_{i=1}^j \pr(R_n^S \mbox{ has a component isomorphic to } H_i) \right)\\ 
& \leqslant &
  \eps_0/2 + \sum_{i=1}^j \eps_i(n).
\end{eqnarray*}
Thus~(\ref{eqn.egsmall}) holds for $n$ sufficiently large, and this completes the proof.
\end{proof}

\section{Probability that $R_n\inu\cA^g$ is connected, proof of Theorem~\ref{thm.conn}}
\label{sec.conn}
In this section we prove Theorem~\ref{thm.conn} on the probability that $R_n \inu \cA^g$ is connected.
After proving part~(a), we justify the statement immediately after Theorem~\ref{thm.conn} (using Theorem~\ref{thm.Fragplanar}), then prove parts (b) and (c). Note that in proving Theorem~\ref{thm.conn} we do not use results proved earlier in this paper, apart from some preliminary results from Section~\ref{sec.back}.

%

\begin{proof}[Proof of Theorem~\ref{thm.conn} (a)]
Assume that $g(n) \ll n/\log^3 n$.
Recall that $\cC$ is the set of connected planar graphs, with exponential generating function $C(x)$.  Let $H \in \cC$, let $h=v(H)$, and let $\alpha_H= h \rho^h/\aut H$; and let $0 <\eps<1$. 
By equation~(\ref{eqn.pend-g}) we have $\pend(R_n,H) \geqslant (1-\eps) \alpha_H n$ wvhp.  It now follows that the conditions of Lemma 6.1 (a) in~\cite{PendAppComp} hold (with $\cA$ in the lemma as $\cA^g$ and $\cC$ in the lemma as here), and so
\[ \limsup_{n \to \infty}\, \pr(\Frag(R_n,\cP) = \emptyset) \leqslant e^{-C(\rho(\cP))} = p^*\,.\]
The required result now follows (without using Theorem~\ref{thm.Fragplanar}).
\end{proof}

Now let us justify the statement immediately after Theorem~\ref{thm.conn}.
Given a class $\cB$ of graphs, let $\con(\cB)$ be the set of connected graphs in $\cB$.
For every surface $S$, both $|\cE^S_n|/ n |\cE^S_{n-1}|$ and $|\con(\cE^S_n)|/ n |\con(\cE^S_{n-1})|$ tend to $\gamma_\cP$ as $n \to \infty$.
Conjecture 14 of \cite{MSsizes} would imply that, for each $\eps >0$ there is an $n_0$ such that for each $n\geqslant n_0$ and each surface~$S$, we have 
$\,\left|\con(\cE_{n}^{S})\right| / \left|\con(\cE_{n-1}^{S})\right| \geqslant (1-\eps) \, |\con(\cP_{n})|/|\con(\cP_{n-1})|\,$ (and similarly in the not-necessarily-connected case).  Let $\cA^g$ be either $\cO\cE^g$ or $\cN\cE^g$. (The case when $\cA^g$ is $\cE^g$ will follow easily from these cases.) 
Then, for each $\eps >0$ there would be an $n_0$ such that for each $n \geqslant n_0$
\begin{equation}
\label{eqn.conjforus}
\frac{|\con(\cA^g_n)|}{|\con(\cA^{g(n)}_{n-1})|} \geqslant  (1-\eps) \,\frac{|\con(\cP_n)|}{|\con(\cP_{n-1})|} .
\end{equation}
Let us assume temporarily that~(\ref{eqn.conjforus}) holds.

Let $\eta>0$. Let $k=\lfloor 2/\eta \rfloor$. By Lemma~\ref{lem.frag} (b) we have
$\pr(\frag(R_n) >k) < \eta$. 
For each planar graph $H$ with $1 \leqslant v(H) =h \leqslant k$, and each $n >2k$
\[ \pr(\Frag(R_n) =H) = \frac{(n)_h}{\aut\, H} \frac{|\con(\cA^{g(n)}_{n-h})|}{|\cA^{g}_{n}|}.\]
Let $R^{\cP}_n \inu \cP$.
Then as above
\[ \pr(\Frag(R^{\cP}_n) =H) = \frac{(n)_h}{\aut\,H} \frac{|\con(\cP_{n-h})|}{|\cP_{n}|}.\]
Hence
\begin{eqnarray*}
\pr(\Frag(R_n) =H)
&=&
\pr(\Frag(R^{\cP}_n) =H)\, \frac{|\con(\cA^{g(n)}_{n-h})|}{|\con(\cA^{g}_{n})|} \frac{|\con(\cA^{g}_{n})|}{|\cA^{g}_{n}|}\, \frac{|\con(\cP_{n})|}{|\con(\cP_{n-h})|} \frac{|\cP_{n}|}{|\con(\cP_{n})|}\\
&=&
\pr(\Frag(R^{\cP}_n) =H)\, \frac{|\con(\cA^{g(n)}_{n-h})|}{|\con(\cA^{g}_{n})|}\, \frac{|\con(\cP_{n})|}{|\con(\cP_{n-h})|}\, \frac{\pr(R_n \mbox{ is connected})}{\pr(R^{\cP}_n \mbox{ is connected})}\,.
\end{eqnarray*}
We may assume that the last factor is at most 1.  Thus, by the assumed equation~(\ref{eqn.conjforus}) with $\eps>0$ sufficiently small that $(1-\eps)^{-k} \leqslant 1+\eta$, for all $n \geqslant n_0 +k$
\begin{eqnarray*}
\pr(\Frag(R_n) =H) & \leqslant & \pr(\Frag(R^{\cP}_n) =H)\, \frac{|\con(\cA^{g(n)}_{n-h})|}{|\con(\cA^{g}_{n})|}\, \frac{|\con(\cP_{n})|}{|\con(\cP_{n-h})|}\\
& \leqslant &
\pr(\Frag(R^{\cP}_n) =H)\, (1-\eps)^{-k} \; \leqslant \; (1 +\eta)\, \pr(\Frag(R^{\cP}_n) =H).
\end{eqnarray*}
Hence
\[ \pr((1 \leqslant \frag(R_n) \leqslant k) \land (\Frag(R_n) \mbox{ is planar})) \leqslant (1+\eta) \,
\pr(1 \leqslant \frag(R^{\cP}_n) \leqslant k).
\]
But by Theorem~\ref{thm.Fragplanar}, for $n$ sufficiently large we have $\pr(\Frag(R_n) \mbox{ is nonplanar}) \leqslant \eta$, and then
\begin{eqnarray*}
&& \pr(R_n \mbox{ not connected})\\ 
& \leqslant &
\pr((1 \leqslant \frag(R_n) \leqslant k) \land (\Frag(R_n) \mbox{ is planar})) +
\pr(\frag(R_n) >k) + \pr(\Frag(R_n) \mbox{ is nonplanar})\\
& \leqslant &
(1+\eta) \,\pr(R^{\cP}_n \mbox{ not connected}) + 2 \eta.
\end{eqnarray*}
Hence
\[ \liminf_{n \to \infty}\, \pr(R_n \mbox{ is connected}) \geqslant p^*, \]
as required (assuming that equation~(\ref{eqn.conjforus}) holds).
\medskip

We now state and prove the `lower density' version of Theorem~\ref{thm.conn} (b), from which we can directly deduce Theorem~\ref{thm.conn} (b).
\begin{lemma}\label{lemma.connectedLD}
Let $g(n)=O(n/\log n)$ be non-decreasing and let $R_n \in_u \cA^g$.
For every $\eps>0$ there is an $\eta>0$ such that the following holds. The set I of integers $n \geqslant 3$ such that $\mathbb{P}(R_n \mbox{ is connected}) \leqslant 1-\eta$ has lower density at least $1-\eps$.
\end{lemma}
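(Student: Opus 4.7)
The plan is to run the second moment method on the number $X=X(R_n)$ of isolated vertices of $R_n$, bootstrapping a lower bound on $\E[X]$ from Lemma~\ref{lemma.pvlowerdensity} and a matching upper bound on $\E[X^2]$ from the growth ratio~(\ref{eqn.growth}). Since $X\ge 1$ forces $R_n$ to be disconnected, it will suffice to exhibit a constant $\eta>0$ and a set $I$ of lower density at least $1-\eps$ such that $\pr(X\ge 1)\ge \eta$ for all sufficiently large $n\in I$.

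First I would establish the exact identity $\E[\ell(R_n)]=(n-1)\E[X]$. A short bijection counts pairs $(G,v)$ with $v$ a leaf of $G\in\cA^g_n$: specify the graph $G\setminus\{v\}\in\cA^{g(n)}_{n-1}$ on $[n]\setminus\{v\}$ together with the unique neighbour $u\in[n]\setminus\{v\}$ of $v$, giving $\sum_G \ell(G)=n(n-1)\,|\cA^{g(n)}_{n-1}|$; the analogous count for isolated vertices yields $\sum_G X(G)=n\,|\cA^{g(n)}_{n-1}|$, and the identity follows on dividing by $|\cA^g_n|$. Applied with $p=\tfrac12$, Lemma~\ref{lemma.pvlowerdensity} then yields $\alpha=\alpha(g,\eps)>0$ and $n_0$ such that $\E[\ell(R_n)]\ge \alpha n/2$ for every $n\ge n_0$ with $n\in I := I^*(g,\eps)$; hence $\E[X]\ge \alpha/2$ on this density-$(1-\eps)$ set.

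Next I would bound the second moment. By symmetry over ordered pairs of labels, $\E[X(X-1)] = n(n-1)\,|\cA^{g(n)}_{n-2}|/|\cA^g_n|$; applying~(\ref{eqn.growth}) with $h=g(n)$ at size $n-2$ gives $|\cA^{g(n)}_{n-1}|\ge 2(n-2)\,|\cA^{g(n)}_{n-2}|$, and substituting yields $\E[X(X-1)]\le \bigl(\tfrac12+o(1)\bigr)\E[X]$, hence $\E[X^2]\le \bigl(\tfrac32+o(1)\bigr)\E[X]$. This step uses no special property of $n$ and no hypothesis on $g$ beyond $g(n)\in\N_0$.

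Finally, Cauchy--Schwarz gives $\pr(X\ge 1)\ge \E[X]^2/\E[X^2]\ge \E[X]/(\tfrac32+o(1))\ge \alpha/3-o(1)$ for $n\in I$, so setting $\eta=\alpha/4$ yields $\pr(R_n\text{ is connected})\le 1-\eta$ for all sufficiently large $n\in I$; and $I$ has lower density at least $1-\eps$ by the definition of $I^*(g,\eps)$ at~(\ref{eqn.slow}). I do not expect a serious obstacle here: the only delicate bookkeeping is writing $|\cA^{g(n)}_{n-1}|$ (not $|\cA^g_{n-1}|$) in the counting identities, since removing a vertex leaves the graph on $n-1$ labels but keeps the genus allowance at $g(n)$; the non-decreasing hypothesis on $g$ enters only through the invocation of Lemma~\ref{lemma.pvlowerdensity}.
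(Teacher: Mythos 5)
Your proof is correct, and it takes a genuinely different route from the paper's. The paper's argument is a direct double-counting argument: Lemma~\ref{lem.frag}~(a) (bridge-addability) gives $\pr(R_n\mbox{ connected})\geqslant 1/e$, and Lemma~\ref{lemma.pvlowerdensity} applied with $p=\tfrac43-\tfrac1e$ gives $\alpha>0$ and a lower-density-$(1-\eps)$ set on which $\pr(\ell(R_n)\geqslant\alpha n)\geqslant\tfrac43-\tfrac1e$, whence $\pr\big(R_n\mbox{ connected}\land\ell(R_n)\geqslant\alpha n\big)\geqslant\tfrac13$; it then detaches a leaf edge from each connected graph with $\geqslant\alpha n$ leaves to produce $\geqslant\alpha n$ disconnected graphs in $\cA^g_n$, each counted at most $n$ times, yielding $\pr(R_n\mbox{ disconnected})\geqslant\alpha/3$. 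You instead run a second-moment argument on the number $X$ of isolated vertices. The exact identities $\E[\ell(R_n)]=(n-1)\E[X]$ and $\E[X(X-1)]=n(n-1)\,|\cA^{g(n)}_{n-2}|/|\cA^g_n|$ rest only on the fact that adding or deleting a pendant or isolated vertex leaves the Euler genus (and orientability type) unchanged, which holds for every choice of $\cA$; you then bound $\E[X(X-1)]\leqslant(\tfrac12+o(1))\E[X]$ via the growth ratio~(\ref{eqn.growth}) where the paper uses bridge-addability, and Cauchy--Schwarz closes the argument with essentially the same constant. Both proofs obtain the lower-density set from $I^*(g,\eps)$ via Lemma~\ref{lemma.pvlowerdensity}, and you are right that the non-decreasing hypothesis on $g$ enters only there. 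Your exact relation $\E[\ell(R_n)]=(n-1)\E[X]$ is a pleasant by-product that the paper does not record. One cosmetic point: you write $G\setminus\{v\}\in\cA^{g(n)}_{n-1}$ when strictly that graph lives on the vertex set $[n]\setminus\{v\}$ rather than $[n-1]$; of course after relabelling $|\cA^{g(n)}_{n-1}|$ is the right count, as you intend.
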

\begin{proof}\m{need non-decreasing}
By Lemma~\ref{lem.frag} (a), $\pr(R_n \mbox{ is connected}) \geqslant \frac1{e}$.
By Lemma~\ref{lemma.pvlowerdensity}, 
there exist $\alpha>0$ and a set $I \subseteq \mathbb{N}$ of integers $n \geqslant 3$ of lower density at least $1-\eps$ such that $\pr(\ell(R_n) \geqslant \alpha n) \geqslant \frac43 - \frac1{e}$ for all $n \in I$. 
Thus 
\[ \pr\big((\ell(R_n) \geqslant \alpha n) \land (R_n \mbox{ is connected})\big)  \geqslant \tfrac13 \;\; \mbox{ for } n \in I .\]
Let $n \in I$. From each connected graph $G \in \cA^g_n$ with $\ell(G) \geqslant \alpha n$, by deleting the edge incident to a leaf, we may construct at least $\alpha n$ disconnected graphs $G' \in \cA^g_n$, and each graph $G'$ is constructed at most $n$ times (guess the neighbour in $G$ of the unique isolated vertex in $G'$). Hence
\[ |\{G \in \cA^g_n : (\ell(G) \geqslant \alpha n) \land (G \mbox{ is connected})\}| \cdot \alpha n \leqslant |\{G \in \cA^g_n : G \mbox{ disconnected}\}| \cdot n\,. \]
Thus for each $n \in I$
\[\pr(R_n \mbox{ is disconnected})
\geqslant \frac{|\{G \in \cA^g_n :
(\ell(G)\geqslant \alpha n) \land (G \mbox{ is connected})  \}| \cdot \alpha}{|\cA^g_n|} \geqslant \tfrac13 \alpha\,, \]
so we may take $\eta= \frac13 \alpha$ to complete the proof.
\end{proof}
\begin{proof}[Proof of Theorem~\ref{thm.conn} (b)]\m{need non-decreasing}
Follows directly from Lemma~\ref{lemma.connectedLD}.
\end{proof}

\medskip

Recall that for a graph $G$ we denote by $\Frag(G)$ the fragment of $G$ (the unlabelled subgraph of $G$ induced on the vertices not in the largest component), and by $\frag(G)$ the number of vertices in $\Frag(G)$. To prove part (c) of Theorem~\ref{thm.conn}, we need two preliminary lemmas. 
\begin{lemma} \label{lem.baddprob}
For every $\eps>0$ there is a $c=c(\eps)$ such that the following holds.  Let $\cD$ be a bridge-addable class of graphs which is closed under deleting all the edges of a component.
Then for each $n$ such that $\cD_n \neq \emptyset$, the random graph $R_n \in_u \cD$ satisfies
\begin{equation} \label{eqn.connvfrag1}
\pr(R_n \mbox{ is connected}) \geqslant 1-\eps - c \, \pr(\frag(R_n)=1).
\end{equation}
\end{lemma}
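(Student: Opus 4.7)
The plan is to split the probability $\pr(R_n \text{ not connected}) = |\cD_n|^{-1} \sum_{k \geq 1} D_k$, where $D_k = |\{G \in \cD_n : \frag(G) = k\}|$, according to the value of $\frag(R_n)$: large values are handled by a Markov tail bound, and each small value $k \geq 2$ is compared with $D_1$ by a direct construction. Since $\cD$ is bridge-addable, Lemma~\ref{lem.frag} gives $\E[\frag(R_n)] < 2$, so Markov yields $\pr(\frag(R_n) \geq K) \leq 2/K$. Choose $K = K(\eps) = \lceil 4/\eps \rceil$ so this tail is at most $\eps/2$. It then suffices to prove $D_k \leq c_k D_1$ for each fixed $2 \leq k < K$ with $c_k$ independent of $n$: summing and absorbing into a single constant $c = 1 + \sum_{k=2}^{K-1} c_k$ will give $\pr(R_n \text{ not connected}) \leq \eps/2 + c\, \pr(\frag(R_n) = 1)$, from which the statement follows after a cosmetic relabelling of $\eps$.

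The construction uses both hypotheses on $\cD$. Given $G \in \cD_n$ with $\frag(G) = k \geq 2$, first apply the closure-under-component-edge-deletion property to each component of $\Frag(G)$ in turn, deleting all its edges; each step keeps the graph in $\cD$, and after all deletions the fragment is $k$ isolated vertices $u_1,\ldots,u_k$ while the main component (size $n-k$) is untouched. Next, select one vertex $v_0 \in \{u_1,\ldots,u_k\}$ to leave isolated, and for each of the remaining $k-1$ former-fragment vertices pick any target in the main component and add the corresponding bridge; bridge-addability keeps each intermediate graph in $\cD$. The resulting graph $G'$ lies in $\cD_n$ with main component of size $n-1$ and $\frag(G') = 1$. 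Different choices of $v_0$ give different isolated vertices in $G'$, and for fixed $v_0$ different target sequences produce different bridge sets, so each $G$ yields exactly $k(n-k)^{k-1}$ distinct $G'$.

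For the inversion, fix $G' \in \cD_n$ with $\frag(G') = 1$ and count pairs $(G,\text{construction}) \mapsto G'$. The vertex $v_0$ is forced to be the unique isolated vertex of $G'$. Each of the other $k-1$ former-fragment vertices has degree exactly one in $G'$ (incident only to the bridge added to it), so they lie among the leaves of $G'$: there are at most $\binom{n-1}{k-1}$ ways to select this set of $k-1$ leaves. Once the set is fixed, each chosen leaf's unique neighbour in $G'$ determines its bridge target, so the only remaining ambiguity is the original edge set inside the fragment, contributing at most $2^{\binom{k}{2}}$ choices. Double-counting yields
\[ D_k \cdot k (n-k)^{k-1} \;\leq\; D_1 \cdot \binom{n-1}{k-1} \cdot 2^{\binom{k}{2}}, \]
and for $n \geq 2K$ this simplifies to $D_k \leq c_k D_1$ with $c_k = 2^{k-1+\binom{k}{2}}/k!$, a constant depending only on $k$.

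The main obstacle I expect is the uniformity of the argument: the inequality $(n-k)^{k-1} \geq (n/2)^{k-1}$ that makes $c_k$ independent of $n$ only holds once $n \geq 2K$, and for the finitely many smaller values of $n$ the construction may need separate treatment. Since for $n < 2K$ there are only finitely many classes $\cD_n$ (in a sense depending on $\eps$), one can trivially inflate $c$ to make $1 - \eps - c\,\pr(\frag(R_n)=1) \leq 0$ whenever $\pr(\frag(R_n)=1)>0$, and inspect the (finitely many) remaining corner cases. The essential use of the edge-deletion hypothesis is precisely to reduce an arbitrary fragment structure to $k$ isolated vertices, which is what makes the ``added bridges become leaves'' inversion argument work; without it one would need to compare $D_k$ with a more intricate substructure count, losing the clean factor of $D_1$.
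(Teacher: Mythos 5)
Your proof is correct, and the strategy is essentially the one the paper uses: bound $\pr(\frag(R_n)>K)$ by Markov via Lemma~\ref{lem.frag}(b), and then show $\pr(\frag(R_n)=k)\leqslant c_k\,\pr(\frag(R_n)=1)$ for each fixed $k\geqslant 2$ by a double count that deletes the fragment's edges (using closure) and reattaches via bridges (using bridge-addability), inverting through the observation that the reattached vertices are leaves. The only organisational difference is that the paper factors the reduction into two stages --- first comparing an arbitrary fragment $H$ on $j$ vertices to the edgeless fragment $S_j$ via edge-deletion (picking up a factor $\leqslant j!$ from the labelings of $H$), then iterating $\pr(\Frag(R_n)=S_j)\leqslant\tfrac1j\pr(\Frag(R_n)=S_{j-1})$ down to $j=1$ via bridge-addition, where the two $j!$ factors neatly cancel --- whereas you collapse both stages into a single direct comparison of $D_k$ to $D_1$, obtaining a slightly different constant $c_k=2^{k-1+\binom{k}{2}}/k!$ in place of the paper's $|\cB_j|\leqslant 2^{\binom{j}{2}}$. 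Both work; neither is appreciably simpler or more general. One small point worth tightening in your write-up is the dispatch of $n<2K$: your remark that one can ``inflate $c$ whenever $\pr(\frag(R_n)=1)>0$'' relies on the fact that the hypotheses actually force $\pr(\frag(R_n)=1)>0$ for every $n\geqslant 2$ with $\cD_n\neq\emptyset$ (delete all edges of all components to land on the edgeless graph, then add bridges from vertex~$1$ to $2,\dots,n-1$ to reach a graph with one isolated vertex); having made that observation, the only remaining corner case is $n=1$, which is trivially connected.
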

Observe that for example we could take the class $\cD$ in the lemma as 
the class of perfect planar graphs. This lemma appears also in~\cite{Colinnewbook}.
\begin{proof}
For each $j \in \N$ let $S_j$ denote the unlabelled $j$-vertex graph with no edges (S is for Stable).
Let $j \geqslant 2$ and let $n \geqslant j+2$. From each graph $G \in \cD_n$ with $\Frag(G) = S_j$ we may
construct $j(n-j)$ graphs $G' \in \cD_n$ with $\Frag(G') = S_{j-1}$ by picking an isolated vertex and joining it to the giant component; and each graph $G'$ is constructed at most $n-j$ times (since $G'$ has at most $n-j$ leaves).  Thus
\[ \pr(\Frag(R_n) = S_j)\, j(n-j) \leqslant \pr(\Frag(R_n) = S_{j-1}) \, (n-j);\]
so
\[ \pr(\Frag(R_n) = S_j) \leqslant (1/j)\, \pr(\Frag(R_n) = S_{j-1});\]
and thus
\[ \pr(\Frag(R_n) = S_j) \leqslant (1/j!) \, \pr(\frag(R_n)=1). \]
Let $\cB_j$ be the set of $j$-vertex (unlabelled) graphs
which can appear as $\Frag(G)$ for some graph $G \in \cD$.
For each graph $H \in \cB_j$, since there are at most $j!$ graphs on $[j]$ isomorphic to $H$,
\[ \pr(\Frag(R_n) = H) \leqslant j! \; \pr(\Frag(R_n) = S_j) \leqslant \pr(\frag(R_n)=1). \]
Also of course $|\cB_j| \leqslant 2^{\binom{j}{2}}$.  Thus for each $k \geqslant 1$
\begin{eqnarray*} 
  \pr(1 \leqslant \frag(R_n)\leqslant k) 
& = &
  \sum_{j=1}^k \sum_{H \in \cB_j} \pr(\Frag(R_n) = H) \\
& \leqslant & 
\sum_{j=1}^k |\cB_j| \cdot \pr(\frag(R_n)=1)\\
& = &
 c_k \: \pr(\frag(R_n)=1)
\end{eqnarray*}
where $c_k =\sum_{j=1}^k |\cB_j| \;\;(\leqslant \sum_{j=1}^{k} 2^{\binom{j}{2}})$.
Now let $0<\eps \leqslant 1$, and let $k=\lfloor 2/\eps \rfloor$. Then $\pr(\frag(R_n) >k)<\eps$, since $\E[\frag(R_n)] < 2$ by Lemma \ref{lem.frag} (b).
Thus by the above with $c=c_k$
\[ \pr(R_n \mbox{ is connected}) = 1-\pr(\frag(R_n) >k) - \pr(1 \leqslant \frag(R_n)\leqslant k) \geqslant 1-\eps - c \, \pr(\frag(R_n)=1), \] 
as required.  This completes the proof of Lemma~\ref{lem.baddprob}.
\end{proof}
%

Given a genus function $g$, the \emph{fixed surface growth ratio} $\fsgr(\cA^g,n)$ is 
defined by
\[ \fsgr(\cA^g, n) = \frac{|\cA_n^{g}|}{ n\, |\cA_{n-1}^{g(n)}|}\, . \]
Let us first note a lower bound on this quantity which is useful when $g$ is large (compare with~(\ref{eqn.growth})).
From each graph $G \in \cA^{g(n)}_{n-1}$, by adding vertex $n$ and making it isolated or adjacent to a single vertex or adjacent to both ends of an edge of $G$ or subdividing an edge of $G$, we can construct  $1+(n-1)+e(G)+e(G)$ graphs $G'$ in $\cA^g_n$, and all these graphs are distinct.  Thus, letting 
$\hat{g}(n)=g(n+1)$ for $n \in \N$ and $S_{n} \inu \cA^{\hat{g}}$,
\[ |\cA^g_n| \geqslant \sum_{G \in \cA^{g(n)}_{n-1}} (n+2e(G))
= |\cA^{g(n)}_{n-1}|\, (n+2\,\E[e(S_{n-1})]) \,,\]
and so
\begin{equation} \label{eqn.fsgr1}
\fsgr(\cA^g, n) \geqslant 1+ (2/n)\, \E[e(S_{n-1})].
 \end{equation}
(These inequalities together with Theorem~\ref{thm.edges} on numbers of edges, yield an improvement on the bound in inequality~(\ref{eqn.growth}).
If $\eps>0$ then for all sufficiently large $n$ and all $0 \leq h \leq (\tfrac12 -\eps) n^2$ we have 
 \[ |\cA^h_{n+1}| / |\cA^h_n|  \geq  (1- \eps) (3n + 2h)\,. \;\; \big) \]
\begin{lemma} \label{lem.conn-and-fsgr}
Let $g$ be a genus function, and let $R_n \in_u \cA^g$.
Then $R_n$ is connected whp  if and only if  $\, \fsgr(\cA^g, n) \to \infty$  as $n \to \infty$.
\end{lemma}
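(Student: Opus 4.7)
The plan is to show that $\pr(\frag(R_n)=1)$ is essentially the reciprocal of $\fsgr(\cA^g,n)$ (up to a constant factor), and then to use Lemma~\ref{lem.baddprob} to transfer this information to the probability that $R_n$ is connected. I would proceed as follows.

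First, I would express the number of graphs in $\cA^g_n$ whose fragment is a single isolated vertex. Such a graph is obtained by choosing a vertex $v\in[n]$ to be isolated and taking a connected graph on the remaining $n-1$ vertices which lies in $\cA^{g(n)}_{n-1}$ (the surface is determined by the total vertex count $n$). This gives
\[ \pr(\frag(R_n)=1) = \frac{n\cdot |\con(\cA^{g(n)}_{n-1})|}{|\cA^g_n|}\,. \]
Since $\cA^{g(n)}_{n-1}$ is bridge-addable, Lemma~\ref{lem.frag} (a) gives $|\con(\cA^{g(n)}_{n-1})| \geqslant e^{-1}|\cA^{g(n)}_{n-1}|$, while the trivial upper bound is $|\cA^{g(n)}_{n-1}|$. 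Substituting, and using the definition of $\fsgr$, we obtain
\[ \frac{1}{e\,\fsgr(\cA^g,n)} \;\leqslant\; \pr(\frag(R_n)=1) \;\leqslant\; \frac{1}{\fsgr(\cA^g,n)}\,. \]

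For the ``if'' direction, suppose $\fsgr(\cA^g,n)\to\infty$, so the upper bound above tends to $0$. The class $\cA^g$ is bridge-addable (adding a bridge between components of an embedded graph keeps it embedded in the same surface) and is closed under deleting all edges of a component (this only decreases edges). Hence Lemma~\ref{lem.baddprob} applies: for each $\eps>0$ there is a constant $c=c(\eps)$ with
\[ \pr(R_n\text{ is connected}) \;\geqslant\; 1-\eps - c\,\pr(\frag(R_n)=1)\,, \]
and letting $n\to\infty$ gives $\liminf_n \pr(R_n\text{ is connected}) \geqslant 1-\eps$. Since $\eps>0$ is arbitrary, $R_n$ is connected whp.

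For the ``only if'' direction, note that $\{\frag(R_n)=1\}\subseteq\{R_n\text{ is disconnected}\}$, so
\[ \pr(R_n\text{ is disconnected}) \;\geqslant\; \pr(\frag(R_n)=1) \;\geqslant\; \frac{1}{e\,\fsgr(\cA^g,n)}\,. \]
If $R_n$ is connected whp then the left side tends to $0$, forcing $\fsgr(\cA^g,n)\to\infty$, which closes the equivalence. I do not expect a serious obstacle here: the only technical points are the verification of bridge-addability and edge-deletion closure (both immediate for $\cA^g$) and the correct identification of the surface genus $g(n)$ attached to the $(n{-}1)$-vertex ``giant part'', which is the reason $\fsgr$ was defined with $|\cA^{g(n)}_{n-1}|$ rather than $|\cA^g_{n-1}|$.
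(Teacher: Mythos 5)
Your proof is correct and follows essentially the same route as the paper: the same identity expressing $\pr(\frag(R_n)=1)$ via $|\con(\cA^{g(n)}_{n-1})|$, the same two-sided bound $\frac{1}{e\,\fsgr} \leqslant \pr(\frag(R_n)=1) \leqslant \frac{1}{\fsgr}$ using Lemma~\ref{lem.frag}(a), the same appeal to Lemma~\ref{lem.baddprob} for the forward direction, and the same observation for the converse. The only (cosmetic) difference is that the paper phrases the converse via $\liminf_n\fsgr=\beta<\infty\Rightarrow\limsup_n\pr(\frag(R_n)=1)\geqslant(e\beta)^{-1}>0$, which is logically identical to your contrapositive formulation.
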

\begin{proof}
We claim that
\begin{equation} \label{claim.conn}
\frac1{e} \cdot \frac1{\fsgr(\cA^g,n)} \leqslant \pr(\frag(R_n)=1) \leqslant  \frac1{\fsgr(\cA^g,n)}.
\end{equation} 
Let us establish this claim. First note that
\[ \pr(\frag(R_n)=1) = 
\frac{\left|\{ G \in \cA_n^g : \frag(G)=1 \}\right|}{ |\cA_n^g|}
= \frac{ n \, \left| \{G \in \cA_{n-1}^{g(n)}: G \mbox{ connected}\}\right|}{ |\cA_n^g|}.\]
Thus
\[ \pr(\frag(R_n)=1)  \leqslant \frac{n \, | \{G \in \cA_{n-1}^{g(n)} \}|}{|\cA_n^g|}
= \frac1{\fsgr(\cA^g,n)}. \]
Also, by Lemma~\ref{lem.frag} (a), 
$\; | \{G \in \cA_{n-1}^{g(n)}: G \mbox{ connected} \}| \geqslant (1/e)\, |\cA_{n-1}^{g(n)}|\,$; and so
\[ \pr(\frag(R_n)=1) \geqslant \frac{n \, (1/e)\, |\cA_{n-1}^{g(n)}|}{|\cA_n^g|}
= \frac1{e} \cdot  \frac1{\fsgr(\cA^g,n))}.\]
This establishes the claim~(\ref{claim.conn}).
\smallskip

Let $\eps>0$.  By Lemma~\ref{lem.baddprob} there is a $c=c(\eps)$ such that
\begin{equation} \label{eqn.disconn}
\pr(R_n \mbox{ is connected}) \geqslant 1- \eps - c \, \pr(\frag(R_n)=1).
\end{equation} 
Now by~(\ref{claim.conn}) and~(\ref{eqn.disconn}),
\[ \pr(R_n \mbox{ is connected}) \geqslant 1-\eps - c \, / \, \fsgr(\cA^g,n),\]
so if $\fsgr(\cA^g,n) \to \infty$  as $n \to \infty$ then whp $R_n$ is connected. Conversely, if 
$\liminf_{n \to \infty} \fsgr(\cA^g,n) = \beta < \infty$ then $\limsup_{n \to \infty} \pr(\frag(R_n)=1) \geqslant (e \beta)^{-1} > 0$, so $\pr(R_n \mbox{ is connected}) \not \to 1$.  This completes the proof of the lemma.
\end{proof}

\begin{proof}[Proof of Theorem~\ref{thm.conn} (c)]
Suppose that $g(n) \gg n$.
By~(\ref{eqn.fsgr1}) and Lemma~\ref{lem.conn-and-fsgr}, we want to show that $\E[e(S_{n-1})] \gg n$; and to show this, it suffices to show that $\E[e(R_{n})] \gg n$ (since $\hat{g}(n) =g(n+1) \gg n$).
Let $g_1(n)$ be any genus function such that $n \ll g_1(n) \ll n^2$, say $g_1(n) = \lfloor n^{3/2} \rfloor$. Let $g_2(n) = \min \{ g(n), g_1(n) \}$, and note that $n \ll g_2(n) \ll n^2$.  By equation~(\ref{eqn.lbasymp})
\[ |\cA^g_n| \geqslant  |\cA^{g_2}_n| \geqslant \frac{n^2}{7(n+g_2(n))}\, |\cA^{g_2(n)-2}_n| \gg |\cA^{g_2(n)-2}_n|. \]
Hence whp $R_n$ is not in $\cA^{g_2(n)-2}_n$, and so by equation~(\ref{eqn.egeqg}) we have $e(R_n) \geqslant g_2(n)-1$ whp.
%
Therefore $\E[e(R_n)] \gg n$ as desired, 
and this completes the proof.
\end{proof}

We have shown that when $g(n)\gg n$, the random graph $R_n\inu\cA^g$ is connected whp. On the other hand, when $g(n)=O(n/\log n)$, it is not the case that $R_n$ is connected whp. 
We conjecture that as long as $g(n)=O(n/\log n)$, the probability that $R_n$ is connected is in fact strictly bounded away from~1.  Is there a connectivity phase transition at around $n/\log n$?
We also saw in Theorem~\ref{thm.conn} that if $g(n)$ is $o(n/\log^3 n)$ then $\,\limsup_{n \to \infty} \pr(R_n \mbox{ is connected}) \leqslant p^*$. Perhaps for \emph{every} genus function $g$ we have  $\liminf_{n \to \infty} \pr(R_n \mbox{ is connected}) \geqslant p^*$?


\section{Random graphs in a hereditary class $\hered(\cA^g)$ of embeddable graphs}
\label{sec.hered}

Recall that, given a genus function $g$, $\hered(\cA^g)$ is the class of graphs which are hereditarily in $\cA^g$.  In this section we prove Theorems~\ref{thm.heredfrag},~\ref{thm.heredconn} and~\ref {thm.heredleaves}, concerning random graphs $R_n \inu \hered(\cA^g)$. 
We begin by proving Theorem~\ref{thm.heredfrag} on the fragment and connectedness of $R_n$ for sufficiently small genus functions $g$. 
We then prove Theorem~\ref{thm.heredleaves} on the number of leaves, for three ranges of genus function $g$. Finally, we use Theorem~\ref{thm.heredleaves} to prove Theorem~\ref{thm.heredconn} on the connectedness of $R_n$, for three ranges of genus function $g$.

To prove Theorem~\ref{thm.heredfrag} we use one preliminary lemma, Lemma~\ref{lem.HFragplanar}, which shows that in the hereditary case we at least have a weakened form of the conclusion of Theorem~\ref{thm.Fragplanar}.  The proof is similar to that of Lemma~\ref{lem.nonori}.

\begin{lemma} \label{lem.HFragplanar}
Let $g(n)=o(n)$, let the set of graphs $\cG \subseteq \cE^g$  be bridge-addable, and let $R_n \inu \cG$ (with the usual understanding about $\cG_n$ being non-empty). Then whp $\Frag(R_n)$ is planar.
\end{lemma}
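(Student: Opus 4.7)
The plan is to mirror the proof of Theorem~\ref{thm.Fragplanar}: first reduce, via a bound on $\frag(R_n)$, to showing that for each fixed nonplanar connected graph $K$ the probability $\pr(R_n\in\comp(K))$ tends to $0$, and then establish this by a bridge-addition count analogous to (but simpler than) the one in Lemma~\ref{lem.nonori}.

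Since $\cG$ is bridge-addable, Lemma~\ref{lem.frag}(b) gives $\E[\frag(R_n)]<2$, so for any $\eps>0$ Markov's inequality yields $\pr(\frag(R_n)\geqslant \ell)\leqslant \eps/2$ with $\ell=\lceil 4/\eps\rceil$. Thus if $\Frag(R_n)$ is nonplanar and $\frag(R_n)<\ell$, then $R_n$ has a component isomorphic to some member of the finite list $H_1,\ldots,H_j$ of unlabelled nonplanar connected graphs on at most $\ell-1$ vertices, and it suffices to prove $\pr(R_n\in\comp(K))=o(1)$ for each such fixed $K$.

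Fix such a $K$ with $k$ vertices. From each pair $(G,C)$ with $G\in\comp(K)\cap\cG_n$ and $C$ a component of $G$ isomorphic to $K$, we form $G'=G+uv$ by choosing $u\in V(C)$ and $v\in V(G)\setminus V(C)$; this gives $k(n-k)$ constructions per pair. Bridge-addability puts $G'$ back in $\cG_n$, and since a bridge between two components preserves Euler genus we have $G'\in\cE^{g(n)}$; moreover $V(C)$ is now a pendant appearance of $K$ in $G'$ whose link edge is $uv$, so $(G,C)$ is recovered from $G'$ by choosing such a pendant appearance and deleting its (uniquely determined) link edge. A double count therefore yields
\[
|\comp(K)\cap\cG_n|\cdot k(n-k)\;\leqslant\;\sum_{G'\in\cG_n}\pend(G',K).
\]

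The heart of the proof will be a uniform bound $\pend(G',K)=O(g(n))$ for $G'\in\cE^{g(n)}$, with the implicit constant depending only on $K$. The idea is that $K$ contains a nonplanar block $B_K$, and for each pendant appearance $W$ the image of $B_K$ inside $G'[W]\cong K$ is itself a (nonplanar) block of $G'$, because no 2-connected piece of $G'[W]$ can be enlarged across the single link edge; by additivity of Euler genus over blocks, $G'$ has at most $g(n)$ nonplanar blocks isomorphic to $B_K$, and the single-link-edge condition restricts each such block to at most $O_K(1)$ extensions to a full copy of $K$ in $G'$. The main obstacle is this last step: when $K$ has a tree part attached to $B_K$, pendant appearances can share a nonplanar block (as simple examples with a few pendants attached at one vertex of a $K_5$ show), so a careful block/extension analysis is needed rather than a naive disjointness argument. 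Once this bound is in hand the displayed inequality gives $\pr(R_n\in\comp(K))=O(g(n)/n)=o(1)$ by the hypothesis $g(n)=o(n)$, and summing over the finitely many $H_i$ completes the proof; note that this route bypasses any analogue of Lemma~\ref{lem.pendH}, which was tailored to the uniform distribution on $\cN\cE^h_n$.
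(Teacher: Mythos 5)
Your proposal mirrors the paper's proof almost exactly: the same reduction via Lemma~\ref{lem.frag}(b) and Markov's inequality to a finite list of small nonplanar connected graphs $K$, the same bridge-addition double count, and the same identification of the crux as a deterministic bound $\pend(G',K)=O_K(g)$ valid for all $G'\in\cE^g_n$ (you are right that this is where $g(n)=o(n)$ enters and that Lemma~\ref{lem.pendH} is not needed). Your displayed inequality and the recovery of the link edge from a pendant appearance are exactly what the paper uses.

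Where you stop short, the paper is brief as well. It observes that $G'$ can have at most $g$ \emph{pairwise disjoint} pendant appearances of $K$ (each disjoint appearance contributes a distinct nonplanar block of $G'$, and by additivity of Euler genus over blocks there are at most $g$ nonplanar blocks), and then asserts that ``for each there are at most $k$ corresponding oriented bridges pointing to the rest of the graph,'' concluding that each $G'$ is constructed at most $kg$ times. So the paper's route to the bound is via a maximal disjoint subfamily rather than via your proposed map from a pendant appearance to the block it contains, but the underlying use of Euler-genus additivity over blocks is the same. Your worry about overlapping pendant appearances sharing a block (e.g.\ several pendants attached at one vertex of a $K_5$) is a legitimate one that the paper's one-line justification glosses over; a clean way to close it is to note that each pendant appearance $W$ is the vertex set of one of the two components of $G'-e$ for a bridge $e=e_W$, so $W$ is determined by the oriented link edge $(u_W,v_W)$, and then to bound the number of admissible oriented bridges by charging each to a nearby nonplanar block in the block--cut tree of $G'$. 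You should make that charging explicit rather than leaving the block/extension analysis as a declared obstacle, but once done your argument and the paper's coincide.
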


\begin{proof}
Let $\eps>0$.  Let $k_0 \geqslant 4/\eps$.  Then, since $\E[\frag(R_n)]<2$ by Lemma~\ref{lem.frag} (b), we have $\pr(\frag(R_n) >k_0)<\eps/2$.
Now let $K$ be any (unlabelled) connected nonplanar graph with $v(K)=k \leqslant k_0$, and let $n>k_0$. 
From each graph $G \in \cG_n$ which has a component $K$, by adding a bridge between the component and the rest of the graph, we construct $k(n-k)$ graphs $G' \in \cG_n$ which have a pendant appearance of $K$.  Observe that $G'$ can have at most $g$ disjoint pendant appearances of $K$ (since the Euler genus of a graph is the sum of the Euler genera of its blocks, see Theorem 4.4.2 of~\cite{GraphsonSurfaces}),
and for each there are at most $k$ corresponding oriented bridges pointing to the rest of the graph. Thus each graph $G'$ can be constructed at most $kg$ times.
Hence
\[ \big| \{G \in \cG_n: G \mbox{ has a component } K\} \big| \cdot k (n-k) \leqslant |\cG_n| \cdot kg, \]
so
\[ \pr(R_n \mbox{ has a component } K) \leqslant g/(n-k). \]
Let $t$ be the number of unlabelled possible connected nonplanar graphs $K$ on at most $k_0$ vertices.  Then
\[ \pr(\Frag(R_n) \mbox{ is nonplanar}) \leqslant \pr(\frag(R_n) > k_0) + t\, g/(n-k_0) < \eps \]
for $n$ sufficiently large.
\end{proof}
For the remainder of this section we always take $R_n\inu \hered(\cA^g)$.
%

\begin{proof}[Proof of Theorem~\ref{thm.heredfrag}]
Assume that $g(n) \ll n/\log^3 n$, so
$\hered(\cA^g)$ has growth constant $\gamma_{\cP}$, by Theorem~\ref{theorem:gc}~(a).
Recall that $\cC$ is the set of connected planar graphs.  For each graph $H \in \cC$, let $\alpha_H= h \rho(\cP)^h/\aut\, H$ where $h=v(H)$. Now let $H \in \cC$ and let $0 <\eps<1$.  Then as in~(\ref{eqn.pend-g}) we have
$\pend(R_n,H) \geqslant (1-\eps) \alpha_H n$ wvhp (without assuming that $g$ is non-decreasing).
Also $H$ is detachable from $\hered(\cA^{g})$, so by~(\ref{eqn.pendb}) 
we have
$\pend(R_n,H) \leqslant (1+\eps) \alpha_H n$ wvhp. Putting these together gives
\begin{equation} \label{eqn.heredpend} (1-\eps) \alpha_H n \leqslant \pend(R_n,H) \leqslant (1+\eps) \alpha_H n \;\; \mbox{ wvhp}.
\end{equation}
It follows that the conditions of Lemma 5.6  in~\cite{PendAppComp} hold for $\hered(\cA^g)$ and $\cC$ (with $\cF$ in the lemma as $\cP$).  Hence, by that lemma, $\Frag(R_n,\cP)$ converges in total variation to $BP(\cP,\rho(\cP))$ as required.
\end{proof}
Next we prove the three parts of Theorem~\ref{thm.heredleaves} on the number of leaves.  Part (a) follows immediately from the special case  of~(\ref{eqn.heredpend}) when $H$ is a single vertex.  We now prove parts (b) and (c). 

%
%
\begin{proof}[Proof of Theorem~\ref{thm.heredleaves} (b)]\m{need non-decreasing}
Suppose that $g(n)=O(n/\log n)$ and $g$ is non-decreasing. By Theorem \ref{theorem:gc} (b) and since $\hered(\cA^g)\subseteq\cA^g$, we know that $\hered(\cA^g)$ has radius of convergence $\rho(\hered(\cA^g))>0$.  Also, single vertices are attachable to $\hered(\cA^g)$ (to form leaves) since $g$ is non-decreasing, and they are detachable by the hereditary property.
Let $0<\alpha<\rho(\hered(\cA^g))$. 
Let $a_n = (|\hered(\cA^g)_n|/n!)^{1/n}$ for each $n \in \N$.  Then $\limsup_{n \to \infty} a_n = \rho(\hered(\cA^g))^{-1}$.
Let $n_1 < n_2< \cdots$ be such that $a_{n_i} \to \rho(\hered(\cA^g))^{-1}$ as $i \to \infty$. Then by Theorem~\ref{thm.app} (see also equation~(\ref{eqn.penda})), 
$\, \pr(\ell(R_{n_i}) < (1-\eps) \rho n_i) = e^{-\Omega(n_i)}$ as $i \to \infty$ and $\, \pr(\ell(R_{n_i}) > (1+\eps) \rho n_i) = e^{-\Omega(n_i)}$ as $i\to\infty$. The result of Theorem~\ref{thm.heredleaves} (b) then follows.
\end{proof}

\begin{proof}[Proof of Theorem~\ref{thm.heredleaves} (c)]
Suppose that a graph $G \in \hered(\cA^g)$ has a leaf $v$, and we form $G'$ by adding an edge incident with~$v$. The key point in the proof is that $G' \in \hered(\cA^g)$.  To see why this holds, let $H$ be an induced subgraph of $G'$, with $h$ vertices (perhaps $H=G'$). We must show that $H \in \cA^g$.  If $v$ is not in $H$ then $H$ is an induced subgraph of $G$ so $H \in \cA^g$.  Suppose that $v$ is in $H$. Then $H-v$ is an $(h\!-\!1)$-vertex graph in $\cA^g$, so $H-v$ embeds in a surface of Euler genus at most $g(h-1)$, and thus $H$ embeds in a surface of Euler genus at most $g(h-1)+2 \leqslant g(h)$ (and where the surface has the appropriate orientability); that is, $H \in \cA^g$, as required.

It is convenient to show next that we are very unlikely to have linearly many leaves. Indeed, let $0<\eta<1$.  We shall see that there is a function $t(n) \gg n$ such that $\pr(\ell(R_n) \geqslant \eta n) \leqslant e^{-t(n)}$.
Let $g_2$ be the genus function such that $g_2(n)=2n$, and for convenience write $g$ also as $g_1$.  Let $V$ be partitioned into $V_1 \cup V_2$; for $i=1,2$ let $G_i \in \hered(\cA^{g_i})$ be a graph on $V_i$;
and let $G$ be the graph $G_1 \cup G_2$ on $V$.  We claim that $G \in \hered(\cA^g)$.  To see this, let $V' \subseteq V$; and for $i=1,2$ let $V'_i =V' \cap V_i$ and let $G'_i$ be the induced subgraph of $G_i$ on $V'_i$, so we have $G'_i \in \cA^{g_i}$.
Then the induced subgraph $G'= G[V'] = G'_1 \cup G'_2$ of $G$ on $V'$ embeds in a surface of Euler genus at most
\[ g(|V'_1|) + g_2(|V'_2|) \leqslant g(|V'|),\]
and where the surface is orientable or not as required.  This completes the proof of the claim that $G \in \hered(\cA^g)$.

Let $\cB$ be the set of
graphs $G \in \hered(\cA^g)$ such that $\ell(G) \geqslant \eta\, v(G)$. From the graphs in $\cB_n$ we shall construct many graphs in $\hered(\cA^g)_n$, with limited double counting.  To avoid cluttering notation assume that $m=\eta n$ is an integer.
Given $G \in \cB_n$, pick a set $W$ of $m$ leaves in $G$, and delete the incident edges to yield the 
graph $G' \in \hered(\cA^g)_n$.  Now add to $G'$ any connected graph in $\hered(\cA^{g_2})$ on $W$, forming $G'' \in \hered(\cA^g)_n$.  The set $\hered(\cA^{g_2})_m$
of graphs is bridge-addable, so by Lemma~\ref{lem.frag} it contains at least $|\hered(\cA^{g_2})_m|/e$ connected graphs. 
Thus the number of constructions is at least $|\cB_n| \cdot |\hered(\cA^{g_2})_m|/e$. 
In $G''$ there are at most $n/m =\eta^{-1}$ components of size $m$. Thus the number of times each graph $G''$ is constructed is at most $\eta^{-1} (n-m)^m \leqslant n^{m}$ for $n$ sufficiently large. 
Then
\[ | \cB_n| \cdot |\hered(\cA^{g_2})_m|/e  \leqslant |\hered(\cA^g)_n| \cdot n^{m}\,,\]
and so
\[\pr(\ell(R_n) \geqslant \eta n) = | \cB_n|/|\hered(\cA^g)_n| \leqslant e\, n^{m} / |\hered(\cA^{g_2})_m|.\]
By \cc{Theorem 8 of~\cite{MSsizes}}
\[ f(m) := \big(|\hered(\cA^{g_2})_m|/m! \big)^{1/m} \to \infty \;\; \mbox{ as } m \to \infty\,.\]
Since $m! \geqslant (m/e)^m$,
\[ n^m/|\hered(\cA^{g_2})_m| = n^m / f(m)^m m! \leqslant \big(f(m) (m/en)\big)^{-m} = (f(\eta n) \eta/e)^{-\eta n}.\]
Thus if we let $t(n) = \log(f(\eta n)\, \eta/e) \cdot (\eta\,n) -1$, then $t(n) \gg n$ and $\pr(\ell(R_n) \geqslant \eta n) \leqslant e^{-t(n)}$.
\smallskip

We may now complete the proof.  Let $\eps>0$. 
Let $k_0=k_0(n) \sim n/\log\log n$ (say) for $n \in \N$.
By Lemma~\ref{lem.frag}~(a) we have $\kappa(R_n) \leqslant_s 1+\Po(1)$, and so there exists $t_2=t_2(n) \gg n$ 
such that $\pr(\kappa(R_n) >k_0) < e^{-t_2}$ for all $n \in \N$.
Let $n \in \N$.
For $j =0,1,2,\ldots$ let
\[ \cD^j = \{ G \in \hered(\cA^g)_n : \ell(G)=j\, \land \,\kappa(G) \leqslant k_0\}. \]
Let $j \in \N$ with $j \leqslant \eta n$. Given $G \in \cD^j$, by adding a new edge between a leaf and a vertex of degree at least 2 we can construct at least $j (n-j-k_0)$ graphs $G'$. We saw above that each such graph $G'$ is in $\hered(\cA^g)$ and so $G'$ must be in $\cD^{j-1}$.
The number of times that $G'$ can be constructed is at most twice the number of vertices of degree 2 in $G'$, which is at most $2(n-j) \leqslant 2n$.  Hence $|\cD^j|\, j(n-j-k_0) \leqslant |\cD^{j-1}|\, 2 n$; so, since $j \leqslant \eta n$ and $k_0 \ll n$, 
\[|\cD^j| \leqslant \frac{2}{j} \frac{n}{n-j-k_0}\,|\cD^{j-1}|
\leqslant \frac{2+\eps}{j}\,|\cD^{j-1}| \]
for $n$ sufficiently large (uniformly over relevant $j$).  Thus, for $n$ sufficiently large, for all integers $1 \leqslant j \leqslant \eta n$ we have 
$|\cD^j| \leqslant \frac{2+\eps}{j}\,|\cD^{kj-1}|$.
Let $X_n = {\bf 1}_{\{\ell(R_n)  \leqslant \eta n\}} {\bf 1}_{\{\kappa(R_n) \leqslant k_0\}} \ell(R_n)$.  Then $X_n \leqslant_s \Po(2+\eps)$ by for example Lemma 3.3 of~\cite{C-wba}.
Hence $\E[(X_n)_{(i)}] \leqslant (2+\eps)^i$ for each $i \in \N$. 
But
\[ (\ell(R_n))_{(i)} \leqslant (X_n)_{(i)} + n^i {\bf 1}_{\{\ell(R_n)>\eta n)\}} + n^i{\bf 1}_{\{\kappa(R_n) > k_0\}},\]
so
\[ \E[(\ell(R_n))_{(i)}] \leqslant \E[(X_n)_{(i)}] + n^i \pr(\ell(R_n)>\eta n) + n^i \pr(\kappa(R_n)>k_0) \leqslant (2+\eps)^i +o(1).\]
This completes the proof of part (c) of Theorem~\ref{thm.heredleaves}, and thus of the whole theorem.
\end{proof}

Finally in this section we prove Theorem~\ref{thm.heredconn} on the connectedness of $R_n$ for three ranges of genus function~$g$ (using Theorem~\ref{thm.heredleaves}).

\begin{proof}[Proof of Theorem~\ref{thm.heredconn} (a)]
\m{need non-decreasing} 
Let $g(n) = O(n/\log n)$ and let $g$ be non-decreasing.   We must show that
\begin{equation} \label{eqn.less}
\limsup_{n \to \infty} \, \pr(R_n \mbox{ is connected}) < 1\,.
\end{equation}
Since $\hered(\cA^g)$ is a subclass of $\cA^g$ and $\rho(\cA^g)>0$ by Theorem \ref{theorem:gc} (b), we also have that $\rho :=\rho(\hered(\cA^g))>0$. Let $\alpha=\rho/2$. 
Since $g$ is non-decreasing, by Theorem~\ref{thm.heredleaves} (b), there is an infinite sequence $n_1<n_2< \cdots$ such that as $i \to \infty$ with whp $\ell(R_{n_i}) \geqslant \alpha n_i$. 
Hence, arguing as in the proof of Lemma~\ref{lemma.connectedLD}, $\pr(R_{n_i} \mbox{ is disconnected}) \geqslant \alpha/3 +o(1)$,
which completes the proof of part (a).
\end{proof}

\begin{proof}[Proof of Theorem~\ref{thm.heredconn} (b)] 
Let $g(n) \gg n/\log n$.  We must show that
\begin{equation} \label{eqn.equals1}
\limsup_{n \to \infty} \, \pr(R_n \mbox{ is connected}) =1.
\end{equation}
By \cc{Theorem 8} 
in the companion paper \cite{MSsizes}
\begin{equation} \label{eqn.hered1} \left(|\hered(\cA^g)_n|/n!\right)^{1/n} \to \infty \; \mbox{ as } n \to \infty.
\end{equation}
For each $j \geqslant 3$, each graph $G$ in $\hered(\cA^g)_j$ with $\frag(G)=1$ consists of an isolated vertex $v \in [j]$ and a connected graph $G'$ in $\hered(\cA^g)$ on vertex set $[j] \backslash \{v\}$ (and conversely each such pair $v, G'$ yields a graph $G$ in $\hered(\cA^g)_j$ with $\frag(G)=1$ if $g(j) \geqslant g(j-1)$).
Thus the probability that $\frag(R_j)=1$ is at most $j$ times the number of connected graphs in $\hered(\cA^g)_{j-1}$ divided by $|\hered(\cA^g)_j|$ (with equality if $g(j) \geqslant g(j-1)$).  Therefore
\begin{equation}\label{heredfsgr} \pr(\frag(R_j)=1) \leqslant \frac{j \: |\hered(\cA^g)_{j-1}|}{|\hered(\cA^g)_j|}. \end{equation}
Hence, for each $n_0 \geqslant 2$
\[ \prod_{j=n_0+1}^{n} \pr(\frag(R_j)=1) \leqslant \frac{n!}{n_0!} \; \frac{|\hered(\cA^g)_{n_0}|}{|\hered(\cA^g)_n|}
\]
so
\[ \left(\prod_{j=n_0+1}^{n} \pr(\frag(R_j)=1)\right)^{\frac1{n-n_0}} \leqslant \left(\frac{|\hered(\cA^g)_{n_0}|}{n_0!}\right)^{\frac1{n-n_0}} \; \left(\frac{n!}{|\hered(\cA^g)_n|}\right)^{\frac1{n-n_0}}.\]
As $n \to \infty$ (with $n_0$ fixed), the first term on the right side tends to 1, and by~(\ref{eqn.hered1}) the second term tends to~0.  But the minimum value of $\pr(\frag(R_j)=1)$ over $n_0<j \leqslant n$ is at most the term on the left side, so this minimum value tends to 0 as $n \to \infty$.
It follows that
\[ \liminf_{n \to \infty} \, \pr(\frag(R_n)=1) =0.\]
Since $\hered(\cA^g)$ is bridge-addable and closed under edge-deletion, by Lemma~\ref{lem.baddprob}
\[ \limsup_{n \to \infty} \, \pr(R_n \mbox{ is connected}) =1.\]
Thus~(\ref{eqn.equals1}) holds, which completes the proof of part (b).
\end{proof}
 
\begin{proof}[Proof of Theorem~\ref{thm.heredconn} (c)] Finally, let $g(n+1) \geqslant g(n)+2$ for each $n \in \N$.  Let $\eps>0$. By Theorem~\ref{thm.heredleaves} (c) we have $\E[\ell(R_n)] \leqslant 2+o(1)$, and so $\pr(\ell(R_n) \geqslant 5/\eps) \leqslant \eps/2$ for $n$ sufficiently large.  
Let $\cB$ be the set of graphs $G \in \hered(\cA^g)$ with $\ell(G) < 5/\eps$ and with $\frag(G)=1$. Let $n \geqslant 3$.  From each graph $G \in \cB_n$, by adding an edge incident with the isolated vertex  we can construct $n-1$ graphs $G' \in \hered(\cA^g)_n$ with $\ell(G') < 5/\eps +1$, and each graph $G'$ can be constructed at most $\ell(G')$ 
times.  Hence
\[ |\cB_n|/\big|\hered(\cA^g)_n\big| < (5/\eps +1)/(n-1).\]
Thus
\[ \pr(\frag(R_n)=1) \leqslant \eps/2 + o(1) < \eps \]
for $n$ sufficiently large.  It now follows from Lemma~\ref{lem.baddprob} that $R_n$ is connected whp, as required.
\end{proof}

It would be interesting to know whether any phase transitions in the hereditarily embeddable case behave differently to the embeddable case. For example, it is conceivable that there is a  threshold for some property in a hereditarily embeddable class which occurs around $g(n) \approx n/\log n$, whilst the corresponding threshold in the embeddable class does not occur until around $n$. This is conceivable since hereditary classes are closed under removing vertices, while general classes are not. Theorem~\ref{thm.heredconn}
for example suggests a phase transition for connectedness at $g(n) \approx n/\log n$ in the hereditary case, while no result is known between $n/\log n$ and $n$ in the embeddable case.


\section{Random graphs in a minor-closed class $\minor(\cA^g)$ of embeddable graphs}\label{minorclosed}
Let us now insist that each minor of our graphs (rather than each induced subgraph) is appropriately embeddable.  Recall that a graph $H$ is a \emph{minor} of a graph $G$ if $H$ can be obtained from a subgraph of $G$ by a sequence of edge-contractions, see for example~\cite{BondyMurty,Diestel}.
Given a class $\cB$ of graphs, let $\minor(\cB)$ be the class of graphs $G$ such that each minor of $G$ is in $\cB$. Thus $\minor(\cB)$ is minor-closed: we call it the \emph{minor-closed part} of $\cB$.
Of course we always have $\cP \subseteq \minor(\cA^g) \subseteq \cA^g$, and so in particular
$\rho(\cP) \geqslant \rho(\minor(\cA^g))$.

Observe that $\cA^g$ contains all graphs if and only if $\cA^g$ contains each complete graph $K_n$, if and only if $\minor(\cA^g)$ contains all graphs.
Also $\minor(\cA^g)$ either contains all graphs or has positive radius of convergence, see \cc{Theorem 9 of \cite{MSsizes}}. Using this result, we can obtain some results on properties of random graphs from such a class. Note that while some of the arguments we use earlier in this paper to get results in the embeddable or hereditarily embeddable cases do
transfer to the more restricted setting of minor-closed classes, others do not. For example the arguments we use to get bounds on the numbers of edges and faces do not transfer to the minor-closed setting. The same holds for the arguments to get upper bounds on the maximum degree and maximum face size.
Given $c>0$ and $0<\eps<1$, we let $(1\pm \eps)\, c$ denote the open interval $((1-\eps)c, (1+\eps)c)$ in the real line.

In all the theorems below, $g$ is a non-decreasing genus function and $R_n \inu \minor(\cA^g)$.  Observe that for the parts (a) of these theorems, where we assume that $\cA^g$ contains all graphs and thus $\minor(\cA^g)$ contains all graphs, nothing else matters about $g$ (and in particular it does not matter whether or not $g$ is non-decreasing); and in this case $R_n$ has the distribution of the binomial 
random graph $G(n,\frac12)$ where edges appear independently with  probability~$\tfrac12$, that is $R_n \sim G(n,\frac12)$.

We begin by considering the connectedness of $R_n$, and find that when the radius of convergence drops to zero, the graph $R_n$ is also with very high probability connected.

\begin{theorem}\label{thm.minorconnected}
\item{(a)}
If $\cA^g$ contains all graphs then  $R_n$ is connected \wvhp.

\item{(b)} If $\cA^g$ does not contain all graphs, then
\begin{equation}
    \liminf_{n\rightarrow \infty}\, \mathbb{P}(R_n \mbox{ is connected}) <1.
\end{equation}
\end{theorem}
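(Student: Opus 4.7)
The approach splits naturally along the two cases of the theorem.

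Part (a) is essentially immediate: if $\cA^g$ contains all graphs then every graph vacuously lies in $\minor(\cA^g)$ (each of its minors is a graph, hence in $\cA^g$), so $R_n$ is distributed as $G(n,\tfrac12)$, and the classical union bound
\[ \pr(G(n,\tfrac12)\text{ is disconnected}) \;\leqslant\; \sum_{k=1}^{\lfloor n/2\rfloor} \binom{n}{k}\, 2^{-k(n-k)} \;=\; e^{-\Omega(n)} \]
yields wvhp connectivity.

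For part (b), the plan is to imitate the proof of Lemma~\ref{lemma.connectedLD}, but with its lower-density input (Lemma~\ref{lemma.pvlowerdensity}) replaced by a subsequence statement drawn from the Pendant Appearances Theorem. The hypothesis that $\cA^g$ does not contain all graphs allows one to invoke Theorem~9 of~\cite{MSsizes} to obtain $\rho := \rho(\minor(\cA^g)) > 0$, while the containment $\cP \subseteq \minor(\cA^g)$ gives $\rho \leqslant \rho(\cP) < \infty$, so Theorem~\ref{thm.app} applies to $\cB = \minor(\cA^g)$.

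The main technical point, which I expect to be the principal obstacle, is to verify that a single vertex is both attachable to and detachable from $\minor(\cA^g)$. Detachability is immediate, since the class is minor-closed and so closed under induced subgraphs. For attachability, let $G\in\minor(\cA^g)_n$ and $w\in V(G)$, and let $G'$ be the graph obtained by attaching a new leaf $v$ at $w$; let $H$ be an $m$-vertex minor of $G'$ with branch-set model $\{V_h\}$. If $v$ lies in no branch set then $H$ is already a minor of $G$, hence in $\cA^g_m$. Otherwise $v\in V_h$ for a unique $h$: if $V_h\supsetneq\{v\}$ then necessarily $w\in V_h$ (the only way for $V_h$ to be connected in $G'$, since $v$ has degree $1$ in $G'$), and replacing $V_h$ by $V_h\setminus\{v\}$ gives a model of $H$ in $G$; if $V_h=\{v\}$ then $h$ has degree at most one in $H$, and dropping the branch set $V_h$ produces a model in $G$ of $H-h$, exhibiting $H$ as a minor of $G$ augmented by an isolated vertex or a leaf, so its Euler genus equals that of some $(m-1)$-vertex graph in $\cA^g$ and is therefore at most $g(m-1)\leqslant g(m)$ by monotonicity of $g$.

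Once attachability and detachability of $K_1$ are established, Theorem~\ref{thm.app}~(a) with $\alpha_{K_1}=\rho$ and $\eps=\tfrac12$ gives, along any subsequence $n_i\to\infty$ realising $(|\minor(\cA^g)_{n_i}|/n_i!)^{1/n_i}\to 1/\rho$, that $\pr(\ell(R_{n_i})\geqslant \tfrac12\rho\, n_i)\to 1$. The concluding step is the double-count from the last paragraph of Lemma~\ref{lemma.connectedLD}, which transfers verbatim because $\minor(\cA^g)$ is closed under edge deletion: from each connected $G\in\minor(\cA^g)_n$ with at least $\tfrac12\rho\, n$ leaves, deleting a leaf edge gives a disconnected graph in the class, and each such disconnected graph arises at most $n$ times, so
\[ \tfrac12\rho\cdot \pr\bigl(R_n\text{ is connected and }\ell(R_n)\geqslant \tfrac12\rho\, n\bigr) \;\leqslant\; \pr(R_n\text{ is disconnected}). \]
If $\pr(R_{n_i}\text{ is connected})\to 1$ then the right-hand side tends to $0$ along the subsequence while the left-hand side tends to $\tfrac12\rho>0$, a contradiction; hence $\liminf_{n\to\infty} \pr(R_n\text{ is connected})<1$.
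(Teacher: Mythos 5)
Your proposal is correct and follows essentially the same route as the paper, which handles part~(a) by citing Theorem~7.3 of Bollob\'as for $G(n,\tfrac12)$ and handles part~(b) with the single line ``note that the proof of Theorem~\ref{thm.conn}~(b), as well as the proof of Theorem~\ref{thm.leaves}~(b), still apply when the class $\cA^g$ is replaced by $\minor(\cA^g)$.'' You have usefully unpacked what it means for those proofs to transfer. In particular, you correctly observe that you should \emph{not} route through Lemma~\ref{lemma.pvlowerdensity} (which assumes $g(n)=O(n/\log n)$, a hypothesis not available here), but instead use the subsequence argument from the proof of Theorem~\ref{thm.leaves}~(b) together with the leaf-deletion double count from the end of Lemma~\ref{lemma.connectedLD}; both pieces rely only on $0<\rho(\minor(\cA^g))<\infty$, closure under edge deletion, and the fact that a single vertex is attachable and detachable.

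The most valuable part of your write-up is the verification that $K_1$ is attachable to $\minor(\cA^g)$ when $g$ is non-decreasing --- the paper leaves this implicit, and it is genuinely the main thing to check before invoking the Pendant Appearances Theorem for the minor-closed part. Your case analysis of the branch sets is correct; the only cosmetic quibble is in the case $V_h=\{v\}$, where it would be cleaner to say that the Euler genus of $H$ is \emph{at most} that of $H-h$ (adding a vertex of degree at most one does not increase Euler genus), rather than ``equals.'' Your part~(a) replaces the paper's citation with a direct union bound; that is a more self-contained and slightly stronger argument (giving an explicit exponential rate) but is the same standard fact.
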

\begin{proof}
For part (a), when $R_n \sim G(n,\frac12)$, we may take $p=\tfrac{1}{2}$ in the proof of Theorem~7.3 in \cite{BelaB}.
%
For part (b) note that the proof of Theorem~\ref{thm.conn} (b), as well as the proof of Theorem~\ref{thm.leaves} (b), still apply when the class $\cA^g$ is replaced by $\minor(\cA^g)$.
\end{proof}

Next, we consider the number of leaves in $R_n$. When the radius of convergence of the exponential generating function of $\minor(\cA^g)$ drops to zero, $R_n$ goes from having linearly many leaves whp to having no leaves at all wvhp.

\begin{theorem}\label{thm.minorleaves}
\item{(a)} If $\cA^g$ contains all graphs then $\ell(R_n)=0$ \wvhp.
\item{(b)} If $\cA^g$ does not contain all graphs, then for any $0<\eps<1$
\begin{equation}
    \rho(\{G \in \cB : \,\ell(G) \not\in (1 \pm \eps) \rho(\cB) v(G)\,\}) >  \rho(\cB) \,. 
\end{equation}
\end{theorem}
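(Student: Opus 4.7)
For part (a), the plan is immediate: when $\cA^g$ contains all graphs, $\minor(\cA^g)$ also contains all graphs, so $R_n$ has the distribution of the binomial random graph $G(n,\tfrac12)$. I would compute that a fixed vertex of $R_n$ is a leaf with probability exactly $(n-1)\,2^{-(n-1)}$, so by Markov's inequality (or equivalently a union bound over the $n$ vertices)
\[\pr(\ell(R_n)\geqslant 1)\;\leqslant\;\E[\ell(R_n)]\;=\;n(n-1)\,2^{-(n-1)}\;=\;e^{-\Omega(n)}\,,\]
which gives $\ell(R_n)=0$ wvhp as required.

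For part (b), my plan has two ingredients. First, invoke the dichotomy stated just before the theorem (Theorem~9 of~\cite{MSsizes}): since by hypothesis $\cA^g$ does not contain all graphs, the minor-closed class $\cB:=\minor(\cA^g)$ has positive radius of convergence $\rho:=\rho(\cB)>0$. Second, apply the Pendant Appearances Theorem (Theorem~\ref{thm.app}) to $\cB$ with $H$ the single-vertex graph: here $\pend(G,H)=\ell(G)$ and $\alpha_H = 1\cdot\rho^{1}/\aut(H)=\rho$. Part~(a) of that theorem then gives $\rho(\{G\in\cB:\ell(G)\leqslant(1-\eps)\rho\, v(G)\})>\rho$, and part~(b) gives the matching bound with $\geqslant(1+\eps)\rho\, v(G)$. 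Taking the union of these two subclasses (whose radius of convergence is the minimum of the two individual radii) yields the required strict inequality.

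The main obstacle is checking the hypotheses of Theorem~\ref{thm.app}, namely that a single vertex is both attachable to and detachable from $\cB=\minor(\cA^g)$. Detachability is immediate: deleting a leaf yields a subgraph of the original graph, and any subgraph of a member of a minor-closed class stays in the class. Attachability is the subtler direction and is where I would spend the most care. Suppose $G\in\cB$ and form $G'$ by attaching a new leaf $v$ at some $u\in V(G)$; I need every minor $H'$ of $G'$ to lie in $\cA^g$. If the subgraph of $G'$ we start from omits $v$ then $H'$ is already a minor of $G$, hence in $\cA^g$. Otherwise $H'$ is obtained from some minor $H$ of $G$ by adjoining $v$ either as an isolated vertex or as a pendant neighbour of the branch-set containing $u$ (depending on whether $uv$ is retained and whether it is contracted). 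In every case $H'$ embeds in the same surface (of the required orientability type) as $H$ and satisfies $v(H')\leqslant v(H)+1$; since $g$ is non-decreasing, the genus bound $g(v(H))\leqslant g(v(H'))$ suffices to place $H'$ in $\cA^g$, so $G'\in\cB$ as needed.
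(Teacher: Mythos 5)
Your proposal is correct and follows the same route as the paper: part (a) is the identical first-moment/union-bound computation for $G(n,\tfrac12)$, and part (b) invokes the dichotomy for $\minor(\cA^g)$ and then applies the Pendant Appearances Theorem with $H$ a single vertex. You simply spell out the attachability check (which the paper asserts in one sentence) in more detail, and your case analysis for why attaching a leaf preserves membership in $\minor(\cA^g)$ is sound.
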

\begin{proof}
Consider part (a), when $R_n \sim G(n,\frac12)$.
The probability that a vertex is a leaf is
$(n-1)2^{-(n-1)}$, and so the probability that $R_n$ has no leaves is at least
\begin{equation}
\mathbb{P}(\ell(R_n)=0) \geqslant 1-n(n-1)2^{-(n-1)} > 1-n^22^{-(n-1)}.
\end{equation}
Thus $\wvhp$ $R_n$ has no leaves, as required.

For part (b), note that since $g$ is non-decreasing, leaves can be attached and detached from $\minor(\cA^g)$. Thus when $\rho(\minor(\cA^g))>0$ we can apply Theorem~\ref{thm.app}, from which the result follows directly.
\end{proof}

We now consider the maximum degree of $R_n$. When $\cA^g$ contains all graphs, $R_n$ has degree about $n/2$ with high probability. Currently, we only have a lower bound on the maximum degree when $\cA^g$ does not contain all graphs.

\begin{theorem}\label{thm.minormaxdeg}
\item{(a)} If $\cA^g$ contains all graphs then for any $\eps>0$ we have $\tfrac12 \,n < \Delta(R_n) < (\tfrac{1}{2}+\eps)\,n$ \wvhp\,.
\item{(b)} If $\cA^g$ does not contain all graphs, then there for any $\eps>0$ exists a constant $c>0$
such that
\begin{equation}
    \limsup_{n\rightarrow \infty}\, \mathbb{P}(c \log n \leqslant \Delta(R_n)) > 1-\eps.
\end{equation}
\end{theorem}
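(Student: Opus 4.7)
For part~(a), note that whenever $\cA^g$ contains all graphs, $\minor(\cA^g)$ also contains all graphs (as observed just before the theorem), so $R_n$ has the law of the Erd\H{o}s--R\'enyi graph $G(n,\tfrac12)$. Each vertex degree is $\Bin(n-1,\tfrac12)$, so Chernoff's inequality gives $\pr(\deg(v) \geq (\tfrac12+\eps)n) \leq e^{-\Omega(n)}$, and a union bound yields $\Delta(R_n) < (\tfrac12+\eps)n$ wvhp. For the lower bound, by symmetry $\pr(\deg(v) > n/2) \geq \tfrac12$, so the expected number of vertices of degree exceeding $n/2$ is at least $n/2$; a second moment computation (degrees in $G(n,\tfrac12)$ are only weakly correlated) shows that this count is $\Omega(n)$ wvhp, which forces $\Delta(R_n) > n/2$ wvhp.

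For part~(b), my plan is to follow the wheel-subgraph template of Lemmas~\ref{thm.fewautoms} and~\ref{lemma.pendcopiesH}, transplanted to the class $\cB := \minor(\cA^g)$. The assumption that $\cA^g$ does not contain all graphs together with Theorem~9 of~\cite{MSsizes} gives $\rho := \rho(\cB) > 0$. The key closure property I need is: \emph{if $G \in \cB$ and $G^+$ is obtained from $G$ by attaching a pendant appearance of a connected planar graph $H$ via a single link edge, then $G^+ \in \cB$}. Granted this, single vertices are both attachable to $\cB$ (using that $g$ is non-decreasing) and detachable from $\cB$ (since $\cB$ is minor-closed), so Theorem~\ref{thm.app} applied with $K_1$ yields, for any $\eta \in (0,1)$, a $\nu > 0$ with $\rho(\{G \in \cB : \ell(G) \leq (1-\eta)\rho\, v(G)\}) > \rho + \nu$. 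Picking $n_1 < n_2 < \cdots$ with $(|\cB_{n_i}|/n_i!)^{1/n_i} \to \rho^{-1}$, this gives $\pr(\ell(R_{n_i}) \geq (1-\eta)\rho\, n_i) \to 1$ along the subsequence.

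Next I fix $\eta' > 0$ small, set $k = k(n) = \lfloor \eta' \log n \rfloor$, and take $H$ to be the wheel $W_k$ on $k+1$ vertices (connected, planar, with $\aut(W_k) \leq 2(k+1)$). Following the proof of Lemma~\ref{lemma.pendcopiesH}, from each graph in $\cB_n$ with at least $(1-\eta)\rho\, n$ leaves and fewer than $n^{1-\delta}$ pendant appearances of $W_k$ one chooses an ordered list of $k+1$ leaves $u_1, \ldots, u_{k+1}$, deletes the edges to $u_2, \ldots, u_{k+1}$, and re-wires $\{u_1, \ldots, u_{k+1}\}$ as a pendant copy of $W_k$ rooted at $u_1$. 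By the closure property the resulting graph lies in $\cB_n$, and each is reconstructed at most $(n^{1-\delta}+k+1)\cdot \aut(W_k)\cdot n^k$ times. Choosing $\eta'$ sufficiently small relative to $\rho$ makes the bad-set fraction decay as a negative power of $n$, so combined with the previous step $\pr(\Delta(R_{n_i}) \geq k(n_i)) \to 1$ along the subsequence, whence $\limsup_n \pr(\Delta(R_n) \geq \eta' \log n) = 1$, giving part~(b) with $c = \eta'$.

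The principal obstacle is the closure property highlighted above. My plan is to exploit the single-edge cut between $V(G)$ and $V(H)$ in $G^+$: given any model of a minor $H_0$ in $G^+$, at most one branch set can straddle the cut (and any such branch set must contain the link edge), so $H_0$ splits as a minor $H_0^G$ of $G$ and a minor $H_0^H$ of $H$ joined through at most a shared vertex or a single edge. Since $G \in \cB$ gives $H_0^G \in \cA^g$ and $H_0^H$ is planar, I can take a relevant embedding of $H_0^G$ in a surface of Euler genus at most $g(v(H_0^G)) \leq g(v(H_0))$ and insert $H_0^H$ inside a small disk at the attachment vertex without altering orientability, which places $H_0$ in $\cA^g$ as required. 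A minor bookkeeping step is needed to handle the degenerate case in which the planar piece has been contracted to a single vertex, but in that case $H_0$ is a minor of $G$ already.
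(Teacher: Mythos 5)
For part~(a), your upper bound via Chernoff and a union bound is fine. For the lower bound, however, there is a genuine gap: you claim that a ``second moment computation'' shows the number $Y$ of vertices of degree exceeding $n/2$ is $\Omega(n)$ \wvhp. The second moment method (Chebyshev) only gives $\pr(Y < \tfrac12\E[Y]) = O(\operatorname{Var}(Y)/\E[Y]^2) = O(1/n)$, a \whp\ conclusion, not the exponentially small failure probability required for \wvhp. Obtaining $\pr(\Delta(G(n,\tfrac12)) \le n/2) = e^{-\Omega(n)}$ is genuinely delicate: the usual bounded-differences/Azuma argument (edge exposure with Lipschitz constant~$1$) yields only a trivial bound here, since one needs $\Delta$ to exceed its (near-$n/2$) mean minus roughly its full typical excess $\Theta(\sqrt{n\log n})$, which is comparable to the martingale's natural deviation scale. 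This is precisely why the paper invokes Riordan and Selby~\cite{MaxDegreeER}, whose refined analysis gives $\pr(\Delta(R_n) \leqslant n/2) = (0.6102 + o(1))^n$. (A side remark: the inequality $\pr(\deg(v) > n/2) \geqslant \tfrac12$ is actually false for $\deg(v) \sim \Bin(n-1,\tfrac12)$ — the correct direction is $\leqslant$ — though this does not materially change the expectation estimate.) To repair the proof you should cite~\cite{MaxDegreeER} directly rather than attempt a second-moment argument.

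For part~(b), your approach is essentially the same as the paper's, which simply notes that the proof of the lower bound of Theorem~\ref{thm.Delta2}~(b) transfers to $\minor(\cA^g)$. You go further and isolate the closure property that makes the transplantation work — that attaching a connected planar graph by a single link edge to a graph in $\minor(\cA^g)$ stays in $\minor(\cA^g)$ — and your sketch of its proof via the single-edge cut decomposition of a minor model (at most one branch set straddles the cut, yielding a minor of $G$ and a planar minor of $H$ meeting in at most one vertex or joined by one edge, which can be embedded by inserting the planar part in a disk, using that $g$ is non-decreasing) is correct. This fills in a detail the paper leaves implicit. Your subsequential argument via Theorem~\ref{thm.app} for the leaf count is a slightly more direct route to the required $\limsup$ conclusion than the paper's lower-density machinery, and is valid.
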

\begin{proof}
Part (a), when $R_n \sim G(n,\frac12)$, follows directly from \cite{MaxDegreeER}.
In particular, $\mathbb{P}(\Delta(R_n)\leqslant\tfrac{1}{2}n) \approx (0.6102 +o(1))^n$.
For part (b) note that the proof of the lower bound in Theorem~\ref{thm.Delta2} (b) still applies when considering the minor-closed subclass $\minor(\cA^g)$ instead of $\cA^g$.
\end{proof}

Finally, we consider the maximum face size in a relevant embedding of $R_n$. We see that when $\cA^g$ contains all graphs, then whp there exists a relevant embedding with face size of order $n^2$. When $\cA^g$ does not contain all graphs, we currently only have a lower bound.
\begin{theorem}\label{thm.minorfacesize}
\item{(a)} If $\cA^g$ contains all graphs then, for any  $0<\eps<1$, \whp\ there exists  an embedding of $R_n$ in an orientable and in a non-orientable surface of Euler genus at most $g(n)$ which has a face of size at least $(1-\eps)\tfrac18n^2$ (and with a connected boundary walk).
\item{(b)} If $\cA^g$ does not contain all graphs then, for any  $0<\eps<1$, there exists a constant $c>0$ such that the following holds. For $n\in\mathbb{N}$ let $p_n$ be the probability that, in every embedding of $R_n$ in a surface of Euler genus at most $g(n)$, the maximum face size is at least $c\log n$. Then $\limsup_{n\rightarrow \infty} p_n\geqslant 1-\eps$.
\end{theorem}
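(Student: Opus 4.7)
\textbf{Part (a).} Since $\cA^g$ contains all graphs, $R_n$ is distributed as the binomial random graph $G(n,\tfrac12)$, and the Ringel--Youngs theorem (Section~\ref{subsec.embeddings}) forces $g(n)\geqslant \tfrac16 n^2 + O(n)$. Write $e := e(R_n)$, so $e=(1+o(1))n^2/4$ \whp. Fix $\eps>0$ and set $m := \lfloor(1-\eps/3)\,\tfrac{n^2}{8}\rfloor$. The plan is first to select \whp\ a spanning connected subgraph $H\subseteq R_n$ with $|E(H)|=m$ that is \emph{upper-embeddable}, so that $H$ admits a one-face cellular embedding on an orientable surface of genus $\lfloor(m-n+1)/2\rfloor$ (and on a non-orientable surface of crosscap number $m-n+1$); dense random graphs are upper-embeddable \whp, and a suitable $H$ is obtained by a direct selection inside $R_n$. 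Embedding $H$ at its maximum genus produces a single face $F$ whose boundary walk is a closed Euler tour of $H$ of length $2m$, and uses Euler genus $m-n+1 < (1-\eps/3)n^2/8 < g(n)$.

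Next I would extend this to an embedding of all of $R_n$ while preserving most of $F$. Add the remaining $e-m \approx n^2/8$ edges one by one as \emph{ear-cuts} of the big face: for each $uv\in R_n\setminus E(H)$, locate a common neighbour $w$ of $u,v$ in $H$ such that the edges $uw$ and $wv$ occur cyclically consecutively on the current boundary walk of the big face, and draw $uv$ inside the face between these two occurrences. Each operation cuts off a triangular face $uwv$, keeps the remaining big face a disc (so its boundary walk stays connected), and reduces its length by exactly~$1$, leaving the Euler genus unchanged. After all additions, the big face has length
\begin{equation*}
2m - (e-m) \;=\; 3m - e \;=\; (1-\eps)\,\tfrac{n^2}{8} + o(n^2),
\end{equation*}
which is at least $(1-\eps)\tfrac{n^2}{8}$ \whp, as required.

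The main obstacle is securing a valid ear-cut for every added edge. The rotation system at each vertex of $H$ is ours to choose (subject only to upper-embeddability); at a vertex $w$ of degree $d_w$, any of the $d_w$ cyclic adjacencies can be arranged. Exploiting that each pair of vertices has $\Theta(n)$ common neighbours in the dense graph $H$, I would show by a Hall-type matching / greedy argument that \whp\ one can simultaneously assign a distinct witness $w$ to each of the $\approx n^2/8$ required ear-cuts. Any chord that fails to receive a witness (a small fraction, if any) can instead be embedded through a fresh handle within the leftover Euler-genus budget $g(n)-(m-n+1) = \Omega(n^2)$, so does not spoil the face-size estimate. Rerunning the same construction with crosscaps in place of handles gives the non-orientable version of the statement.

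\textbf{Part (b).} If $\cA^g$ does not contain all graphs, Theorem~9 of~\cite{MSsizes} gives $\rho(\minor(\cA^g))>0$. Since $g$ is non-decreasing, pendant paths can be attached to and detached from $\minor(\cA^g)$, so the Pendant Appearances Theorem (Theorem~\ref{thm.app}) applies. Following the template of Lemmas~\ref{lemma.pendcopiesH} and~\ref{lemma.degreefacelowerdensity}, a standard double-counting argument shows that for any $\eps>0$ there exist $\eta>0$ and a set $I\subseteq\N$ of lower density at least $1-\eps$ such that, for $n\in I$, the probability that $R_n\inu\minor(\cA^g)$ contains a pendant copy of a path on $\eta\log n$ degree-two vertices is at least $1-\eps$. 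Such a pendant path forces every embedding of $R_n$ in a surface of Euler genus at most $g(n)$ to contain a face of size at least $c\log n$ for some $c\leqslant \eta$ (exactly as in the final step of the proof of Lemma~\ref{facesize_lowerdensity}). Hence $p_n\geqslant 1-\eps$ for $n\in I$, giving $\limsup_{n\to\infty}p_n\geqslant 1-\eps$ as required.
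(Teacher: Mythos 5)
Your proof is correct and is essentially the paper's: both reduce to the lower bound of Theorem~\ref{thm.facesize}~(b) via long pendant paths of degree-two vertices (Lemma~\ref{lemma.degreefacelowerdensity}), noting that the argument transfers unchanged to $\minor(\cA^g)$ once $\rho(\minor(\cA^g))>0$ is known from Theorem~9 of~\cite{MSsizes}. No issues here.

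\textbf{Part (a).} Your route is genuinely different from the paper's and has real gaps. The paper does not attempt to control the rotation system of $R_n$ (or of any subgraph) directly. Instead it takes \emph{any} cellular orientable embedding of the connected graph $R_n$ of Euler genus at most $(1+\tfrac12\eps)\tfrac1{12}n^2$, which exists whp by Theorem~1.1 of~\cite{GenusRandom1}, notes that $g(n)\geqslant(1-\tfrac14\eps)\tfrac16 n^2+3$ because $K_n\in\minor(\cA^g)$, and then spends the surplus Euler genus (at least $(1-\eps)\tfrac1{24}n^2$ handles) merging a growing face with an adjacent one, one handle at a time. Each merge adds at least~$3$ to the boundary length, so either all faces merge into one of size $\geqslant 2e(R_n)\geqslant(1-\eps)\tfrac12 n^2$, or all handles are spent and the face has size at least $3\cdot(1-\eps)\tfrac1{24}n^2=(1-\eps)\tfrac18 n^2$. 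A final cross-cap gives the non-orientable version. This is elementary once the existence of the initial embedding is granted, and it sidesteps all of the combinatorics you are trying to carry out.

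Your approach would, if completed, be more constructive, but as written it has at least three unresolved steps. First, the \emph{upper-embeddability of a chosen spanning subgraph $H\subseteq R_n$ with exactly $m$ edges} is asserted rather than proved; dense $G(n,\tfrac12)$ being upper-embeddable whp does not obviously give you an upper-embeddable spanning subgraph with a prescribed edge count, and you need a concrete selection. Second, and more seriously, the \emph{ear-cut matching} is the heart of your argument and is left as a sketch: you must simultaneously choose a rotation system giving a single face (an Euler-tour boundary of length $2m$) \emph{and} arrange, for each of the $\approx n^2/8$ chords $uv\in E(R_n)\setminus E(H)$, a corner at a common neighbour $w$ at which $uw,wv$ are cyclically consecutive, with these corners pairwise distinct (and remaining on the big face after earlier cuts). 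The constraint of having a single face already pins down the rotation system up to a restricted family, and it is not at all clear that a Hall-type argument gives compatibility — you have conflated the abundance of common neighbours with the availability of free rotation choices. Third, the \emph{fallback of routing a failed chord through a fresh handle} is not analysed: drawing an edge through a handle placed inside the big face can split that face, so you have not shown the face-size bound survives even for a small number of exceptional chords. You should replace this construction with the paper's genus-surplus/face-merging argument, or make the matching step precise.
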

\begin{proof}
Consider part (a), when $R_n \sim G(n,\frac12)$.  By Theorem~\ref{thm.minorconnected} (a) we may assume that $R_n$ is connected.
Let $\eps > 0$. By Theorem 1.1 of \cite{GenusRandom1}, whp $R_n$ has an embedding in an orientable surface of Euler genus at most $(1+\tfrac1{2}\eps)\tfrac1{12}n^2$.
Since $\minor(\cA^g)$ contains all graphs, it must also contain all complete graphs, so  $g(n)$ is at least as large as the Euler genus of the complete graph $K_n$ for each $n \in \N$, in particular $g(n)\geqslant \left\lceil \tfrac16(n-3)(n-4) \right\rceil \geqslant (1-\tfrac1{4}\eps)\tfrac16n^2 +3$ for $n$ sufficiently large. Start with a cellular embedding of $R_n$ in an orientable surface of Euler genus at most $(1+\tfrac1{2}\eps)\tfrac1{12}n^2$, which exists whp. For $n$ sufficiently large we can add at least 
\[\left\lfloor \tfrac12((1-\tfrac1{4}\eps)\tfrac16n^2 +2-(1+\tfrac1{2}\eps)\tfrac1{12}n^2)\right\rfloor = 1+ \left\lfloor (1-\eps)\tfrac1{24}n^2 \right\rfloor\geqslant (1-\eps)\tfrac1{24}n^2\]
handles to the surface to create an embedding of $R_n$ in an orientable surface of Euler genus at most $g(n)-1$. 
We add a handle between two adjacent faces $F_1$ and $F_2$ (sharing at least one edge), which merge to form a new face $F_3$ with a single boundary walk of length the sum of the lengths of the boundary walks of $F_1$ and $F_2$; then we add a handle between $F_3$ and an adjacent face, forming a larger new face $F_4$; then add a handle between $F_4$ and an adjacent face, and so on, until either all faces are merged into one big face, or no handles are left. In the former case, since $R_n$ has at least $(1-\eps)\tfrac14n^2$ edges whp, we construct a face of size at least $(1-\eps)\tfrac12n^2$ whp. In the latter case, since the minimum face size is at least 3, we construct whp a face of size at least
\begin{equation}
 3\cdot (1-\eps)\tfrac1{24}n^2 =(1-\eps)\tfrac1{8} n^2\,,
\end{equation}
as required.  Finally, we can add a cross-cap to obtain an embedding as required in a non-orientable surface of Euler genus at most $g(n)$.
\smallskip

For part (b) note that the proof of the lower bound in Theorem~\ref{thm.facesize} (b) still applies when considering the minor-closed subclass $\minor(\cA^g)$  instead of $\cA^g$.
\end{proof}

\section{Random unlabelled graphs $\tilde{R}_n$} \label{sec.unlab}

For a given genus function $g$, we consider random unlabelled graphs
$\tilde{R}_n \inu \tA^g_n$,
$\tilde{R}_n \inu \hered(\tA^g)$, and $\tilde{R}_n \inu \minor(\tA^g)$.
We investigate the probability that $\tilde{R}_n$ is connected, and we prove Theorem~\ref{thm.unlab}. We can say little at present compared with Theorem~\ref{thm.conn}, on the probability of being connected in the labelled case.
We also prove the following theorem, which concerns a minor-closed class, and which gives a result for $\tilde{R}_n \inu \minor(\tA^g)$ corresponding closely to Theorem~\ref{thm.minorconnected} in the labelled case.
\begin{theorem}\label{thm.minorconnectedU}
Let $g$ be a genus function, and let $\tilde{R}_n \inu \minor(\tA^g)$.
\begin{description}
\item{(a)} If $\tA^g$ contains all graphs, then $\tilde{R}_n$ is connected \wvhp\, (and indeed $\pr(\tilde{R}_n \mbox{ not connected}) \sim n\,2^{-n+1}$).

\item{(b)} If $g$ is non-decreasing and $\tA^g$ does not contain all graphs, then
\begin{equation}
    \liminf_{n\rightarrow \infty}\, \mathbb{P}(\tilde{R}_n \mbox{ is connected}) <1.
\end{equation}

\end{description}
\end{theorem}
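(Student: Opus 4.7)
Part (a) is a classical random-graph calculation. Since $\tA^g$ contains all graphs so does $\minor(\tA^g)$, and therefore $\tilde{R}_n$ is uniformly distributed on $\widetilde{\cG}_n$, the set of all unlabelled graphs on $n$ vertices. I would first invoke the standard fact that almost all graphs are asymmetric, so $|\widetilde{\cG}_n| = (1+o(1))\, 2^{\binom{n}{2}}/n!$ with relative error of order $n^2 2^{-n}$. Removing, and then adding back, an isolated vertex sets up a bijection between unlabelled $n$-vertex graphs with at least one isolated vertex and \emph{all} unlabelled $(n-1)$-vertex graphs (both the removal and re-attachment operations are well defined on isomorphism classes), so the number of the former is exactly $|\widetilde{\cG}_{n-1}|$, and a direct computation gives $|\widetilde{\cG}_{n-1}|/|\widetilde{\cG}_n| \sim n\, 2^{-n+1}$. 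Finally, any disconnected graph with no isolated vertex has smallest component of order at least $2$, and the unlabelled count of such graphs is $O(n^2\cdot 2^{-(n-2)})$ times $|\widetilde{\cG}_n|$, so is asymptotically negligible. Combining the contributions gives the desired asymptotic $\pr(\tilde{R}_n \mbox{ not connected}) \sim n\, 2^{-n+1}$.

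For part (b), set $\cB = \minor(\cA^g)$. Because $\tA^g$ does not contain all graphs there is some $h$ with $K_h \notin \tA^g$, and hence every graph in $\cB$ is $K_h$-minor-free and $\cB$ is a proper minor-closed class. The first step is to establish the key lemma that $\tilde{\rho}(\minor(\tA^g))>0$, i.e.\ that $|\minor(\tA^g)_n|\leq C^n$ for some constant $C$; in the range $g(n) = O(n/\log n)$ this is immediate from $\minor(\tA^g) \subseteq \tA^g$ and equation~(\ref{eqn.rhopos}), while for arbitrary $g$ it should follow by combining $K_h$-minor-freeness with a result showing that asymptotically almost all graphs in $\cB$ are asymmetric, so that $|\minor(\tA^g)_n| \sim |\cB_n|/n!$ and the labelled exponential bound $|\cB_n| \le c^n n!$ from Theorem~9 of \cite{MSsizes} transfers. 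Granting this, the second step is to observe that since $g$ is non-decreasing, the map $G' \mapsto G'\cup\{v\}$ (adjoining a single isolated vertex $v$) sends $\minor(\tA^g)_{n-1}$ injectively into the set of disconnected graphs in $\minor(\tA^g)_n$: any minor of $G'\cup\{v\}$ is either a minor of $G'$ (already in $\tA^g$) or such a minor with an added isolated vertex, whose Euler genus is unchanged and thus at most $g(v(M)) \le g(v(M)+1)$ by monotonicity. Consequently
\[ \pr(\tilde{R}_n \mbox{ is disconnected}) \;\geq\; \frac{|\minor(\tA^g)_{n-1}|}{|\minor(\tA^g)_n|}, \]
and if this ratio tended to $0$ as $n\to\infty$ then $|\minor(\tA^g)_n|/|\minor(\tA^g)_{n-1}| \to \infty$, forcing $|\minor(\tA^g)_n|^{1/n} \to \infty$ and contradicting the key lemma. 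Hence along some subsequence the ratio stays bounded below, giving $\liminf_n \pr(\tilde{R}_n \mbox{ is connected}) < 1$.

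The main obstacle is clearly the key unlabelled growth lemma in the regime where $g(n)$ exceeds $n/\log n$: the labelled bound from Theorem~9 of \cite{MSsizes} does not automatically descend to the unlabelled setting, since in principle most graphs in a minor-closed class could carry large automorphism groups. The cleanest route is via an asymmetry-dominance result for proper minor-closed classes, which would give $|\minor(\tA^g)_n| \sim |\minor(\cA^g)_n|/n!$ and hence the desired $|\minor(\tA^g)_n|\le C^n$; once this is secured, the rest of part (b) is a short pigeonhole argument.
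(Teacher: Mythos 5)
Your overall strategy for both parts coincides with the paper's, but there are two points to flag.

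For part (a), the paper also uses the fact $u_n\sim 2^{\binom n2}/n!$ and isolates the contribution of the smallest‑component size $k=1$ (which, as in your bijection argument, is exactly $u_{n-1}$, giving the main term $n\,2^{-n+1}$), then bounds the total contribution of $k\geqslant 2$. Your bijection for isolated vertices is a nice clean version of the $k=1$ term. However, your stated bound $O(n^2\cdot 2^{-(n-2)})$ for the disconnected graphs with no isolated vertex is \emph{too weak} to conclude negligibility: compared with the main term $n\,2^{-n+1}$ the ratio is $\Theta(n)$, not $o(1)$, so the asymptotic $\sim n\,2^{-n+1}$ would not follow from that bound as written. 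The true contribution is much smaller: the $k=2$ term is already of order $n^2 2^{-2n}$ (since $\ell_2\ell_{n-2}/\ell_n = 2^{-2n+4}$), and summing over $2\leqslant k\leqslant n/2$ using $u_m\leqslant c\,\ell_m/m!$ gives $O(2^{-n})$, which is $o(n\,2^{-n})$. So the conclusion is right but the displayed bound needs to be corrected.

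For part (b), you have correctly identified both the role of monotonicity of $g$ (giving closure of $\minor(\tA^g)$ under adding an isolated vertex, hence $|\minor(\tA^g)_{n-1}|\leqslant$ the number of disconnected graphs in $\minor(\tA^g)_n$) and the essential input that $\tilde{\rho}(\minor(\tA^g))>0$. You rightly note that for $g(n)=O(n/\log n)$ this follows from $\minor(\tA^g)\subseteq\tA^g$ and~(\ref{eqn.rhopos}), whereas for larger $g$ one needs more. The paper covers all such $g$ at once by invoking Theorem~9 of~\cite{MSsizes}, which is stated for the unlabelled class directly, so no labelled-to-unlabelled transfer (via asymmetry or otherwise) is needed in the paper's proof; your proposed asymmetry‑dominance route is a plausible alternative but is left as an unproved sketch and is not what the paper does. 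The final contradiction step you give ($|\minor(\tA^g)_{n-1}|/|\minor(\tA^g)_n|\to 0$ would force $|\minor(\tA^g)_n|^{1/n}\to\infty$) is correct and yields the required $\liminf<1$; the paper's Lemma~\ref{lem.unlabnew} is a quantitative refinement of the same pigeonhole idea that further produces a lower-density-$(1-\eps)$ set of $n$ for which the connectedness probability is uniformly bounded away from~$1$, which your version does not give but is not needed for the statement of Theorem~\ref{thm.minorconnectedU}.
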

In part (a) the fact that $\tilde{R}_n$ is connected whp (rather than wvhp) follows directly from Theorem 9.5 of~\cite{BelaB}.
Both Theorem~\ref{thm.unlab} and Theorem~\ref{thm.minorconnectedU} (b) will follow from Theorem~\ref{thm.unlabnew}, which gives an asymptotic lower density version of these results.
The following lemma is the main step in the proofs.

\begin{lemma} \label{lem.unlabnew}
Let the set $\tB$ of unlabelled graphs be closed under adding an isolated vertex, let $\tilde{\rho}(\cB) >0$, and let $\tilde{R}_n \inu \tB$.  
Then for any $\eps>0$ there is a set $I^* \subseteq \N$ with lower 
density at least $1-\eps$ such that $\sup_{n \in I^*} \pr(\tilde{R}_n \mbox{ is connected}) <1$.
\end{lemma}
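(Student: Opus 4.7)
The plan is to bound the probability of being disconnected from below by the ratio $|\tB_{n-1}|/|\tB_n|$, and then argue that $\tilde\rho(\tB)>0$ forces this ratio to be bounded away from $0$ on a set of $n$ of large lower density.

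First I would establish the disconnectedness bound. Since $\tB$ is closed under adding an isolated vertex, the map $H \mapsto H \cup \{v\}$, sending an unlabelled graph in $\tB_{n-1}$ to the graph obtained by adjoining a fresh isolated vertex, lands inside $\tB_n$. This map is injective on unlabelled graphs (two unlabelled graphs with an isolated vertex added are isomorphic iff the originals are), and its image lies in the set of members of $\tB_n$ with at least one isolated vertex. Hence
\[
\pr\bigl(\tilde R_n \text{ is disconnected}\bigr) \;\geqslant\; \pr\bigl(\tilde R_n \text{ has an isolated vertex}\bigr) \;\geqslant\; \frac{|\tB_{n-1}|}{|\tB_n|}.
\]

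Next I would exploit $\tilde\rho(\tB)>0$ to control the ratios $|\tB_n|/|\tB_{n-1}|$ on average. Write $a_n=|\tB_n|$ and $r_n = \log(a_n/a_{n-1})$. The closure property gives $a_n \geqslant a_{n-1}$, so $r_n \geqslant 0$. Since $\tilde\rho(\tB)>0$, there is a constant $C<\infty$ with $a_n^{1/n}\leqslant C$ for all large $n$, and therefore
\[
\sum_{n=2}^{N} r_n \;=\; \log(a_N/a_1) \;\leqslant\; N \log C + O(1).
\]
For a parameter $M>0$, the set $B_M := \{n\in\N : r_n > M\}$ thus satisfies $M \cdot |B_M \cap [N]| \leqslant N \log C + O(1)$, giving
\[
\frac{|B_M \cap [N]|}{N} \;\leqslant\; \frac{\log C}{M} + o(1) \qquad \text{as } N\to\infty.
\]

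Finally, given $\eps>0$, choose $M$ large enough that $\log C/M < \eps/2$, and take $I^* := \N\setminus B_M$. By the previous display, $|I^*\cap[N]| \geqslant (1-\eps)N$ for all sufficiently large $N$, so $I^*$ has lower density at least $1-\eps$ in the sense of the paper's definition. For every $n\in I^*$ we have $a_{n-1}/a_n \geqslant e^{-M}$, and combining with the first step gives
\[
\pr\bigl(\tilde R_n \text{ is connected}\bigr) \;\leqslant\; 1 - e^{-M},
\]
so $\sup_{n\in I^*}\pr(\tilde R_n \text{ connected}) \leqslant 1-e^{-M} < 1$, as required.

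The only subtle point I anticipate is the unlabelled injectivity in the first step — one must check that adjoining an isolated vertex really is a well-defined injection at the unlabelled level, which follows because an isomorphism $H_1\cup\{v_1\} \cong H_2\cup\{v_2\}$ must preserve the number of isolated vertices and hence restricts to an isomorphism $H_1 \cong H_2$. The rest is a straightforward averaging argument.
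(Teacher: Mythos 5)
Your proposal is correct and follows essentially the same route as the paper's proof: both argue that $\pr(\tilde{R}_n \text{ disconnected}) \geqslant |\tB_{n-1}|/|\tB_n|$ by adding an isolated vertex, and both exploit $\tilde\rho(\cB)>0$ to show the growth ratio $|\tB_n|/|\tB_{n-1}|$ is bounded on a set of lower density near~$1$ (the paper phrases this as a multiplicative telescoping-product contradiction with threshold $\beta=(\alpha+1)^{1/\eps}$; you phrase it additively via $\sum \log(a_n/a_{n-1}) \leqslant N\log C+O(1)$ and Markov, which is the same computation).
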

\begin{proof}
Let $\tD$ denote the set of disconnected graphs in $\tB$. For each $n \geqslant 2$ 
\begin{equation} \label{eqn.ugrow0}
    |\tB_{n}| \geqslant |\tD_{n}| \geqslant |\tB_{n-1}|.
\end{equation}
To prove the second inequality in~(\ref{eqn.ugrow0}), just consider adding an isolated vertex to each graph in $\tB_{n-1}$.  Let $\alpha = \tilde{\rho}(\cB)^{-1}$.
Let $n_0$ be large enough that $|\tB_n|^{1/n} \leqslant \alpha +1$ for each $n \geqslant n_0$.
Let $\beta = (\alpha +1)^{1/\eps}$. 
Let $J^*=J^*(g,\eps)$ be the set of $j \in \N$ such that either $j=1$, or $j \geqslant 2$ and $ |{\tB}_{j}| \leqslant \beta\, |{\tB}_{j-1}|$.

Let us show that for each $n \geqslant n_0$, $|J^* \cap [n]| \geqslant (1-\eps)n$.
Let $n \geqslant n_0$ and suppose for a contradiction that $|J^* \cap [n]| < (1-\eps)n$; that is, $|{\tB}_{j}| > \beta\, |{\tB}_{j-1}|$
for more than $\eps n$ values $j \in \{2,\ldots,n\}$.  But then by~(\ref{eqn.ugrow0})
\[ |\tB_n|^{1/n} > (\beta^{\eps n})^{1/n} = \alpha  +1 \]
contradicting our choice of $n \geqslant n_0$.  Thus $J^*$ has lower density at least $1-\eps$.

Now let $n \geqslant 2$ be in $J^*$.  Then
$ |{\tB}_{n}| \leqslant \beta\, |{\tB}_{n-1}|$, and $ |{\tD}_{n}| \geqslant |{\tB}_{n-1}|$ by~(\ref{eqn.ugrow0}), so
\[ \pr(\tilde{R}_n \mbox{ is connected}) = 1-|{\tD}_{n}|/|{\tB}_{n}| \leqslant 1-1/\beta, \]
which completes the proof.
\end{proof}

\begin{theorem} \label{thm.unlabnew}
Let the genus function $g$ be non-decreasing.
Suppose that either (a) $g(n) = O(n/\log n)\,$ and either $\tilde{R}_n \inu \tilde{\cA}^g$ or $\tilde{R}_n \inu \hered(\tA^g)$; or (b) $\cA^g$ does not contain all graphs and $\tilde{R}_n \inu \minor(\tA^g)$.  Then for any $\eps>0$ there is a set $I \subseteq \N$ with lower 
density at least $1-\eps$ such that \[ \sup_{n \in I^*} \pr(\tilde{R}_n \mbox{ is connected}) <1\,, \]
and a fortiori
\[ \liminf_{n \to \infty}\, \pr(\tilde{R}_n \mbox{ is connected}) <1.\]
\end{theorem}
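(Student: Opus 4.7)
The plan is to reduce Theorem~\ref{thm.unlabnew} to Lemma~\ref{lem.unlabnew} in each of the three cases ($\tB = \tA^g$, $\tB = \hered(\tA^g)$, and $\tB = \minor(\tA^g)$). In each case it suffices to verify the two hypotheses of the lemma: that $\tB$ is closed under adding an isolated vertex, and that $\tilde{\rho}(\tB) > 0$. Once both hold, the lemma directly produces the set $I \subseteq \N$ of lower density at least $1-\eps$ with $\sup_{n \in I} \pr(\tilde{R}_n \mbox{ is connected}) < 1$, and the ``a fortiori'' statement is immediate, since $\liminf_{n \in \N} \pr(\tilde{R}_n \mbox{ is connected}) \leqslant \sup_{n \in I}\pr(\tilde{R}_n \mbox{ is connected})$ for any infinite $I$.

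For closure under adding an isolated vertex, the key point in all three cases is that $g$ is non-decreasing. If $G \in \cA^g_{n-1}$, then $G$ embeds in an appropriate surface of Euler genus at most $g(n-1)$; the graph $G^+$ obtained by adding an isolated vertex embeds in the same surface, and since $g(n-1) \leqslant g(n)$ this surface still witnesses $G^+ \in \cA^g_n$. For the hereditary case, I check that every induced subgraph of $G^+$ is in $\cA^g$: a subgraph not containing $v$ is an induced subgraph of $G$, while a subgraph containing $v$ is obtained from an induced subgraph $H$ of $G$ (already in $\cA^g$) by adding an isolated vertex, which again embeds in the same surface and is in $\cA^g$ by monotonicity of $g$. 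The minor-closed case is the same inspection: every minor of $G^+$ either is a minor of $G$, or consists of an isolated copy of $v$ together with a minor of $G$.

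For positivity of the unlabelled radius of convergence: in case~(a) with $\tB = \tA^g$ and $g(n) = O(n/\log n)$, this is exactly equation~(\ref{eqn.rhopos}). For $\tB = \hered(\tA^g)$, the inclusion $\hered(\tA^g) \subseteq \tA^g$ gives $\tilde{\rho}(\hered(\tA^g)) \geqslant \tilde{\rho}(\tA^g) > 0$. For case~(b), when $\cA^g$ does not contain all graphs the class $\minor(\cA^g)$ is a proper minor-closed class, and Theorem~9 of~\cite{MSsizes} (referenced in the excerpt immediately after the definition of $\minor(\cB)$) guarantees $\tilde{\rho}(\minor(\tA^g)) > 0$.

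The argument is almost purely formal once Lemma~\ref{lem.unlabnew} is in hand, and the theorem subsumes both Theorem~\ref{thm.unlab} and Theorem~\ref{thm.minorconnectedU}~(b). The only mildly delicate point is the verification of closure under adding an isolated vertex in the minor-closed case, where one must keep track of the structure of the minors of $G^+$; but this reduces immediately to the two cases above and to the fact that $g$ is non-decreasing. No genuine obstacle arises.
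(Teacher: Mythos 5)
Your proof is correct and takes essentially the same approach as the paper: both reduce to Lemma~\ref{lem.unlabnew} by verifying closure under adding an isolated vertex (which follows from $g$ being non-decreasing) and positivity of $\tilde\rho(\tB)$ via equation~(\ref{eqn.rhopos}) for $\tA^g$, the inclusion $\hered(\tA^g)\subseteq\tA^g$ for the hereditary case, and Theorem~9 of~\cite{MSsizes} for the minor-closed case. Your write-up is somewhat more explicit about the closure verification and the deduction of the ``a fortiori'' claim, both of which the paper states without elaboration.
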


\begin{proof}\m{need non-decreasing}
Since $g$ is non-decreasing, each of the relevant classes of graphs is closed under adding an isolated vertex. If $g(n) = O(n/\log n)$ then by~(\ref{eqn.rhopos}) we have $\tilde{\rho}(\hered(\tA^g)) \geqslant \tilde{\rho}(\tA^g) >0$.
If $\cA^g$ does not contain all graphs then $\rho(\minor(\tA^g))>0$ by \cc{Theorem 9 of~\cite{MSsizes}.} Theorem~\ref{thm.unlabnew} now follows from Lemma~\ref{lem.unlabnew}.
\end{proof}

Both Theorem~\ref{thm.unlab} and Theorem~\ref{thm.minorconnectedU} (b) follow from Theorem~\ref{thm.unlabnew}.  It remains here to prove Theorem~\ref{thm.minorconnectedU}~(a).

\begin{proof}[Proof of Theorem~\ref{thm.minorconnectedU} (a)]
Let $\ell_n$ be the number of graphs on vertex set $[n]$, so $\ell_n = 2^{\tbinom{n}2}$.
Let $u_n$ be the number of unlabelled $n$-vertex graphs.  Clearly $\ell_1=u_1=1$, and $\ell_n \leqslant u_n \, n!$ for all $n \in \N$. It was shown by Poly\'a that $u_n \sim \ell_n/n!$, see for example~\cite[Page 105]{FSbook}, and see also~\cite[Section 9.1]{BelaB}. It follows that there is a constant $c$ such that $u_n \leqslant c\, \ell_n/n!$ for all $n \in \N$. 
Now
\[ \pr(\tilde{R}_n \mbox{ not connected}) \geqslant \frac{u_1 \, u_{n-1}}{u_n} \sim \frac{n \,\ell_{n-1}}{\ell_n} = n\,2^{-n+1}.\]
On the other hand, 
\begin{eqnarray*}
\pr(\tilde{R}_n \mbox{ not connected}) & \leqslant &
\sum_{k=1}^{\lfloor n/2\rfloor} \frac{u_k u_{n-k}}{u_n} \; = \frac{u_1\,u_{n-1}}{u_n} + \sum_{k=2}^{\lfloor n/2\rfloor} \frac{u_k u_{n-k}}{u_n} \\
& \leqslant &
(1+o(1))\, n\,2^{-n+1} +
c^2 \, \sum_{k=2}^{\lfloor n/2\rfloor} \binom{n}{k} \frac{\ell_k \,  \ell_{n-k}}{\ell_n}.
\end{eqnarray*}
But for $2 \leqslant k \leqslant n/2$
\[ \frac{\ell_k \,  \ell_{n-k}}{\ell_n} \leqslant \frac{\ell_2 \,  \ell_{n-2}}{\ell_n} = 2^{-2n+4}\,, \]
so
\[ \sum_{k=2}^{\lfloor n/2\rfloor} \binom{n}{k} \frac{\ell_k \,  \ell_{n-k}}{\ell_n} \leqslant \sum_{k=2}^{\lfloor n/2\rfloor} \binom{n}{k} \cdot 2^{-2n+4}  \leqslant 2^{n-1} \cdot 2^{-2n+4} = 2^{-n+3}\,; \]
and thus
$\,\pr(\tilde{R}_n \mbox{ not connected}) \leqslant
(1+o(1))\, n\,2^{-n+1}$.
Hence
\[ \pr(\tilde{R}_n \mbox{ not connected}) \sim n\,2^{-n+1},\]
as required.
\end{proof}

We have seen that in the unlabelled graph classes, when $g(n)=O(n/\log n)$ and $g$ is non-decreasing, the 
probability that $\tilde{R}_n$ is connected is bounded away from 1. 
We have focussed on connectedness, but it would be interesting to know more about typical properties of $\tilde{R}_n$. It is worth noting, however, that even in the planar case, little is known about typical properties of the random unlabelled planar graph. In \cite{BFKV2007} the authors present several results on properties such as connectedness, number of edges and chromatic number of the random unlabelled outerplanar graph.

\section{Concluding remarks and questions}
\label{sec.concl}

Recall that $g$ is a genus function and $\cA^g$ is either $\cO\cE^g$, $\cN\cE^g$, $\cE^g$ or $\cO\cE^g\cap \cN\cE^g$. For random graphs in $\cA^g$, we have given estimates and bounds on the probability of being connected, the typical numbers of leaves, edges and faces, the typical maximum degree, and the typical maximum size of a face. Further, we have given some corresponding results for the hereditary case and the minor-closed case, and for random unlabelled graphs. Many interesting questions remain open. While we have managed to find some answers which differ between genus $o\left(n/\log n \right)$ and genus $\omega(n)$, for example in Theorem~\ref{thm.conn} on connectivity or Theorem~\ref{thm.leaves} on the number of leaves,
we have not for example found answers which differ between genus $o\left(n/\log n \right)$ and genus $\omega(n/\log n)$, apart from in Theorem~\ref{thm.heredconn} on connectivity in the hereditary case. This raises the question of whether phase transitions for these properties occur when $g(n)$ is around $n/\log n$, or around $n$, or whether there is even a slow transition happening between $n/\log n$ and $n$. Some further concluding remarks and conjectures on connectivity, number of edges and faces, number of leaves, and maximum degree and face size were given at the end of Section~\ref{sec.conn}, Section~\ref{sec.edgesfaces}, Section~\ref{sec.leaves} and Section~\ref{degfacesize_LDversions}, respectively.

In the hereditarily embeddable case we have presented results on the fragment, connectivity and number of leaves. While these results are stronger than in the embeddable case, we would like to learn about other properties. 
Further, it would be interesting to know whether phase transitions in the hereditarily embeddable case behave differently from the embeddable case. For example, it is conceivable that there is a threshold for some property in a hereditarily embeddable class when $g(n)$ is around $n/\log n$, whilst the corresponding threshold in the embeddable class does not occur until later. 


\bibliographystyle{abbrv}
\bibliography{main}

\end{document}